\documentclass[11pt,a4paper]{article} 
\usepackage{pgfplots}
\pgfplotsset{compat=1.18}
 \usepackage{hyperref}

\usepackage{a4wide,amsmath,amssymb,amsthm,mathtools,
xcolor}
\usepackage[all]{xy}
\usepackage{tikz}
\usetikzlibrary{cd}
\usepackage[mathcal]{euscript}
\usepackage{mathrsfs}
\usepackage{enumitem}
\usepackage[capitalise]{cleveref} 
\let\mathcal=\mathscr

\newtheorem{thm}{Theorem}[section]
\newtheorem{corol}[thm]{Corollary}

\newtheorem{lemma}[thm]{Lemma}

\newtheorem{prop}[thm]{Proposition}
\newtheorem{defin}[thm]{Definition}

\newtheorem{notation}[thm]{Notation}
\theoremstyle{remark}
\newtheorem{rem}[thm]{Remark}
\newtheorem{ex}[thm]{Example}
\newenvironment{remark}{\begin{rem}\rm}{\parbox{2mm}{\hfill}\qee\end{rem}}
\newenvironment{example}{\begin{ex}\rm}{\parbox{2mm}{\hfill}\hfill\qee\end{ex}}

\newcommand{\cO}{{\mathcal O}}

\newcommand{\PP}{{\mathbb P}}
\newcommand{\BA}{{\mathbb A}}

\newcommand{\E}{\mathfrak E}
\newcommand{\F}{\mathfrak F}
\newcommand{\gS}{\mathfrak S}
\newcommand{\gQ}{\mathfrak Q}

\newcommand{\grass}[2]{\operatorname{Gr}_{#1}(#2)}

\newcommand{\SL}{\operatorname{SL}}
\newcommand{\Hom}{\operatorname{Hom}}
\newcommand{\End}{\operatorname{End}}
\newcommand{\Spec}{\operatorname{Spec}}
\newcommand{\Mat}{\operatorname{Mat}}
\newcommand{\Proj}{\operatorname{Proj}}

\newcommand{\red}{\operatorname{red}}

\newcommand{\rk}{\operatorname{rk}}

\newcommand{\C}{{\mathbb C}}

\newcommand{\Z}{{\mathbb Z}}

\renewcommand{\P}{{\mathbb P}}

\newcommand{\fE}{{\mathfrak E}}
\newcommand{\fFl}{\operatorname{\mathfrak Fl}}
\newcommand{\Fl}{\operatorname{Fl}}

\newcommand{\qee}{\mbox{\hspace{0.2mm}}\hfill$\triangle$}
\newcommand{\id}{\operatorname{id}}

\newcommand{\Span}{\operatorname{Span}}
\newcommand{\Quot}{\operatorname{Quot}}
\newcommand{\tr}{\operatorname{tr}}

\newcommand\Hgrass{{\mathfrak{Gr}}} 
\newcommand{\hgrass}[2]{\operatorname{\mathfrak{Gr}}_{#1}(#2)}

\begin{document}
\begin{center} \bf   HIGGS GRASSMANNIANS \end{center}

\thispagestyle{empty} \vspace{-3mm}
\begin{center}{\sc Ugo Bruzzo,$^{ad}$ Michele Graffeo,$^{b}$  and Beatriz Gra\~na Otero$^{c}$}  \\[5pt]
\small
$^a$ Departamento de Matem\'atica, Instituto de Ci\^encias Exatas, 
Universidade Federal  \\ de Minas Gerais, Av.~Ant\^onio Carlos 6627, 
Belo Horizonte, 31270-901 MG, Brazil \\
$^b$ Dipartimento di Matematica, Politecnico di Milano, \\Piazza Leonardo da Vinci 32, 20133 Milano, Italy \\
$^c$ Departamento de Matem\'aticas  and IUFFYM (Instituto de F\'\i sica \\ Fundamental y Matem\'aticas), Universidad de Salamanca, \\
Plaza de la Merced 1-4, 37008 Salamanca, Spain \\
$^d$ IGAP (Institute for Geometry and Physics), Trieste, Italy  
\\[3pt]
Email: {\tt ubruzzo@ufmg.br, michele.graffeo@polimi.it, beagra@usal.es}

\end{center}

\vfill

\begin{abstract}   
We review and study the notion of Higgs Grassmannians, which are schemes parametrizing the Higgs subbundles of a given Higgs bundle over a smooth variety.   We write their   equations as  closed subschemes of the usual Grassmann bundles and investigate their geometry. Often the Higgs Grassmannians generically  have 0-dimensional fibers over the base variety, thus implying that Higgs subbundles are ``scarce.'' 
We characterize the structure of the Higgs Grassmannians  by analyzing the local Jordan type of the Higgs field. A refined analysis of the rank 2 case is also provided in terms of the discriminant of the characteristic polynomial. We apply our characterizations to the Simpson system of a smooth variety to provide a streamlined proof of its semistability, and we establish a structural relationship between the rank 1 Higgs Grassmannian and the spectral cover of the   Higgs bundle. Finally, we introduce the schemes of flags of Higgs subbundles of a given Higgs bundle, and the Quot schemes parametrizing Higgs quotients; we conclude with some examples.
\end{abstract}

\vfill\begin{minipage}{\textwidth} \small
\parbox{\textwidth}{\hrulefill} \\
Date: \today \\
MSC 2020: 14H60, 14J60, 14M15 \\
Keywords: Higgs bundles, Grassmann bundles, semistability, spectral covers \\
Research partly supported by Bolsa de Produtividade 305343/2025-4 of Brazilian CNPq (UB), PRIN 2022BTA242 ``Geometry of algebraic structure: moduli, invariants, deformations,''  (UB-MG), INdAM-GNSAGA (MG), Grant PID2021-128665NB-I00 funded by MCIN/AEI/10.13039/501100011033 (Spain),   ``ERDF A way of making Europe,''  Universidad de Salamanca through Programa XIII (BGO).

\end{minipage}

\newpage

\section{Introduction}
A Higgs bundle, or more generally, a Higgs sheaf, is an example of a {\em decorated sheaf,} i.e., a coherent sheaf equipped with an additional structure; given a smooth variety $X$, a Higgs sheaf on $X$ is a pair $\mathfrak F=(F,\phi)$, where $F$ is a coherent sheaf on $X$, and $\phi\colon F \to F\otimes\Omega^1_X$ is an $\cO_X$-linear morphism. When $\dim X>1$, $\phi$ is subject to the following condition: after locally trivializing the cotangent sheaf $\Omega^1_X$, the morphism $\phi$   corresponds to $n$ endomorphisms $(\phi^{(1)},\dots,\phi^{(n)})$ of $F$, where $n = \dim X$; one requires these endomorphisms to commute with each other. This makes the spectral theory of $\phi$ manageable.

For any decorated sheaf, it is quite natural that any additional structure or property of the sheaf is required to be compatible with the decoration. So, for instance, in defining the notion of slope (semi)stable Higgs sheaf, one  asks  the inequality between the slopes  to hold only for {\em $\phi$-invariant subsheaves,} i.e, subsheaves $G$ of $F$ such that $\phi(G)\subset G\otimes\Omega_X^1$ ((semi)stability of Higgs sheaves was 
introduced by Hitchin for the locally-free rank-two case on Riemann surfaces \cite{Hitchin-self} and generalized to coherent Higgs sheaves of arbitrary rank and base variety dimension 
by Simpson \cite{simpson-local}).  Note that a Higgs sheaf $\mathfrak F=(F,\phi)$ may be (semi)stable without its underlying sheaf $F$ being (semi)stable.

Restricting ourselves to the locally free case, and in view of the 
relevance of invariant subbundles of a given Higgs bundle, the question 
naturally arises as to whether there exists a scheme that parametrizes 
the Higgs subbundles of a Higgs bundle --- or more precisely, represents the functor
 of families of Higgs subbundles --- just as Grassmann bundles represent 
the functor of families of subbundles of a given vector bundle. It comes to no surprise that such a scheme exists, and in a natural way   is a closed subscheme of the relevant Grassmann bundle.

To be more precise, if $\fE=(E,\phi)$ is a Higgs bundle on $X$ --- that is, a Higgs sheaf whose underlying coherent sheaf $E$ is locally free --- we shall denote by $\grass dE$ the Grassmann bundle of $d$-planes in $E$, where $1\le d < r=\rk E$. Substantially following \cite{bruzzo-hernandez-dga}, we shall construct a scheme $\hgrass d{\fE}$, that we call the $d$-th Higgs Grassmannian of $\fE$. This scheme represents the functor of families of rank $d$ Higgs subbundles of $\fE$; this will be a closed subscheme of $\grass dE$. The purpose of this paper is to study the schemes $\hgrass d{\fE}$, paying particular attention to the case of rank 1 subbundles ($d=1$). We shall see that in many instances the dimension of the generic fiber of  $\hgrass 1{\fE}$ over $X$ is   0,   showing that Higgs subbundles are scarce; some Higgs bundles, as we shall see, may not have Higgs subbundles at all.

\paragraph{Organization of the content.} We highlight the contents of the various Sections. The notion and   formal definition of the Higgs Grassmannians are recalled in \Cref{sec:equations}; the equations of the  Higgs Grassmannians of a Higgs bundle $\fE=(E,\phi)$ as closed subschemes of the Grassmann bundles of $E$ are written, first in the case of rank 1 subbundles, and then in the general case. 

In \Cref{sec:curves} we study the structure of the Higgs Grassmannian of rank 1 subbundles when the base variety is a smooth curve, for Higgs bundles of any rank. In this analysis we take advantage of the fact that the Jordan type of the Higgs field is constant in an open subset. The Section ends with a few simple examples. 

In \Cref{sec:dimnrkrsingleJb} we allow the base variety to be of arbitrary dimension. Now locally the Higgs field consists of a number of commuting matrices, so that the Section starts with a result in linear algebra: if one of the matrices has a single Jordan block, the remaining ones can be set in a triangular Toeplitz  form. Building on this, we first characterize the Higgs Grassmannian of rank 1 subbundles when the Higgs fields has that structure, then extending this to the case of several Jordan blocks with distinct eigenvalues.

In \Cref{section:discriminant} we make a somewhat more refined analysis, but considering only the rank 2 case, in terms of the discriminant of characteristic polynomial of one of the matrices that locally  make up the Higgs field; a complete classification is given for the case of a one-dimensional base.

There is a Higgs bundle naturally associated with any smooth variety $X$, which we call the {\em Simpson system} of $X$, i.e., $\gS=(S, \phi)$ with $S=\cO_X \oplus \Omega^1_X$ and $\phi(f, \omega)= (\omega, 0) $; this was introduced by Simpson in \cite{simpson-unif} and was showed to be  stable when $X$ is a surface of general type, thus providing a  proof of the Miyaoka-Yau inequality in that case. This was generalized to higher dimension in \cite{GKPT19}. In 
\Cref{section:Simpson} we characterize the Higgs Grassmannians of the Simpson system, and use that characterization to provide a quick proof of the semistability of the Simpson system,  relying on the semistability of the tangent bundle of $X$. 

In \Cref{section:spectral} we review the definition of spectral cover $S_{\fE}$ of a Higgs bundle $\fE$, and show that there is a surjective morphism $g \colon \hgrass{1}{\fE} \to S_{\fE}$, which admits a section over the smooth locus of $S_{\fE} $. 

In view of further developments, in \Cref{sec:flag} we introduce the schemes representing the functor of flags of Higgs subbundles and the Higgs Quot functor, discussing  three motivating examples.

\paragraph{Acknowledgement.} We thank Daniel Hern\'andez Ruip\'erez for useful discussions. BGo thanks SISSA, and MG thanks Universidade Federal de Minas Gerais in Belo Horizonte for hospitality and support.

\section{Higgs Grassmannians}\label{sec:equations}

We recall from \cite{bruzzo-hernandez-dga} the definition of the Higgs Grassmannians (see also \cite{bruzzo-grana-adv}); note however that there the authors characterized the Grassmannians of Higgs quotients, while in this paper we  prefer to deal with Higgs subbundles. Moreover, in this Section we shall write down the equations of the Higgs Grassmannians as closed subschemes of the Grassmann bundles $\grass{d}{E}$.

\subsection{Definition of the Higgs Grassmannian}

Let $X$ be an $n$-dimensional smooth variety over $\C$. We recall that a Higgs sheaf on $X$
is a pair $\F=(F,\phi)$, where $F$ is a coherent $\cO_X$-module, and $\phi\colon F \to F\otimes\Omega^1_X$ is an 
$\cO_X$-linear morphism such that the composition $$\phi\wedge\phi \colon F \xrightarrow{\phi} F\otimes\Omega^1_X  \xrightarrow{\phi\otimes\mathrm{id}}   F\otimes \Omega^1_X\otimes\Omega^1_X \xrightarrow{\mathrm{id}\otimes\wedge} F\otimes\Omega^2_X$$
is zero (this is the intrinsic form of the commutativity mentioned in the Introduction, and is trivially verified when $\dim X =1$).  A Higgs bundle is a Higgs sheaf with $F$ locally free. Semistability and stability of   torsion-free Higgs sheaves are defined as for $\cO_X$-modules, but only with reference to $\phi$-invariant subsheaves, i.e., $\mu(G) \le \mu(F)$, or $\mu(G) < \mu(F)$, whenever $G$ is a $\phi$-invariant subsheaf of $F$, with $0 < \rk G < \rk F$; the slope $\mu$ is defined as $\mu(F)= \deg F/\rk F$. In turn, $\deg F=c_1(F) \cdot H^{n-1}$, where $H$ is a fixed polarization.

 For a given   rank $r$ vector bundle $E$ on  $X$, and for every $d$ in the range $ 0 < d < r$, 
 we denote the Grassmann bundle on $X$ of rank $d$ subbundles  of $E$ as  $\grass dE$.  
 Since $\grass1E=\P E$ we shall use the latter notation. One has the universal exact sequence  
\begin{equation} 0 \to S_d  \to \pi_d^\ast E \to Q_d \to 0 \label{eq:univ}\end{equation} of vector bundles
on $\grass dE$, where $S_d$ is the universal rank $d$ subbundle, 
$Q_d$ is the rank $r-d$ universal quotient bundle, and   $\pi_d\colon\grass dE \to X$ is the projection.
If $\E=(E,\phi)$ is a Higgs bundle, we form the diagram
\begin{equation*}\label{diagram}\xymatrix{0\ar[r]&S_d  \ar[r]^{{a}_d}& \pi^\ast_d   E \ar[r]\ar[d]^{\pi^\ast_d \phi}& Q _d  \ar[r]&0\\
0\ar[r]&S_d  \otimes \Omega^1_{\grass dE}\ar[r]&\pi^\ast_d   E\otimes \Omega^1_{\grass dE}\ar[r]^{{b}_d}& Q _d  \otimes \Omega^1_{\grass dE}\ar[r]&0.}
\end{equation*} 
The $d$-th Higgs Grassmannian of $\E$, denoted $\hgrass d{\E}$, is the subscheme of $\grass d E$ defined by the zero locus of the composition 
${b}_d\circ\pi_d^\ast\phi\circ {a}_d$. By construction, the restrictions of the bundles $S_d$ and $Q_d$ to   $\hgrass d{\E}$ carry Higgs fields
induced by $\pi_d^\ast\phi$, so that we have an exact sequence of Higgs bundles on $\hgrass d{\E}$ 
\begin{equation}\label{eq:uniSeq}
0 \to \gS_d \to \rho_d^\ast \E \to \gQ_d \to 0    
\end{equation}
where $\rho_d$ is the restriction ${\pi_d}_{|\hgrass d{\E}}$. 

The scheme  $\hgrass d{\E}$ may be singular, reducible, nonreduced, nonequidimensional. On the positive side it enjoys
the analogous universal property of the usual Grassmann bundles:
if $f\colon Y \to X$ is a scheme morphism, and $\mathfrak{F}$ is a rank $d$    Higgs subbundle of
$f^\ast\E$, there is a morphism $g\colon Y \to \hgrass d{\E}$ such that
$\mathfrak{F}=g^\ast \gS_d$, and the diagram
$$\xymatrix{
& \hgrass d{\E} \ar[d]^{\rho_d} \\
Y\ar[ur]^g\ar[r]_f & X}
$$
commutes. 
\begin{remark}
\label{rem:universe}
As foreshadowed in the Introduction, the Higgs Grassmannian $\hgrass d{\E} $ is the scheme that represents the functor
    \begin{equation}\label{eq:functor}
            \begin{tikzcd}[row sep =tiny]
        \text{Sch}^{\mathrm{op}}  \arrow[r,"F_{d,\fE}"]&   \text{Sets}\\
        T\arrow[r,mapsto] & \{ \text{rank $d$ Higgs subbundles of $p_2^\ast \fE$}\},
    \end{tikzcd}
    \end{equation}
where $p_2\colon T\times X \to X$ is the projection onto the second factor.  We shall use this interpretation in \Cref{sec:flag} to simplify the definition of Higgs flag scheme.
\end{remark}

\subsection{Equations of the scheme of rank-one Higgs subbundles\label{eqns1}} 
We are going to write down the equations of the Higgs Grassmannians of a Higgs bundle inside the Grassmann bundle of the underlying vector bundle.
We start  with the case of $\hgrass 1 {\fE}$, the Higgs Grassmannian
of rank one Higgs subbundles of a 
rank $r$ Higgs vector bundle  $\fE $  on a $n$-dimensional smooth  
variety $X$, which was already treated in \cite{bruzzo-hernandez-dga}.  
We have a closed immersion $j \colon 
\hgrass 1 {\fE}\hookrightarrow \PP E$, and the universal Higgs subbundle 
is $\gS_1=\cO_{\hgrass 1 {\fE}}(-1)=j^\ast  \cO_{\PP E}(-1)$. We denote by 
$\rho_1\colon \hgrass 1 {\fE}\to X$ the projection. 
 
One can write local equations for $\hgrass 1 {\fE}$ by using the  Euler   sequence 
$$ 
\begin{tikzcd}
    0 \arrow[r]& \cO_{\PP E}(-1) \arrow[r,"\psi"]&   
\pi_1^\ast E \arrow[r,"\eta"]& T_{\PP (E)/X}\otimes\cO_{\PP E}(-1) \arrow[r]& 0\,, 
\end{tikzcd}
$$ 
as this is the form that \eqref{eq:univ} takes in this case. The Higgs Grassmannian 
$\hgrass 1 {\fE}$    is defined as the closed subscheme of $\PP E$ where the  composition of morphisms  
$$ 
\begin{tikzcd}[column sep = 4cm]
  \cO_{\PP E}(-1) \arrow[r,"    (\eta\otimes 1)\circ\pi_1^\ast\phi\circ\psi "]& 
 T_{\PP (E)/X}\otimes\cO_{\PP E}(-1)
\otimes \pi_1^\ast\Omega^1_X 
\end{tikzcd}
$$
vanishes. Given a local frame $\{e_1,\dots,e_r\} \subset H^0(U,E)  $ of $E$  on some trivializing open $U\subset X$, which induces  local 
vertical homogeneous coordinates $(z_1, \cdots, z_r)$ for $\PP E$,  
the Higgs field is  
represented by a matrix 
$(\phi_{ij})_{i,j=1}^r$ of 1-forms by letting 
$\phi(e_j)=\sum_{i=1}^r\phi_{ij}\otimes e_i$.
By computing explicitly the morphism $(\eta\otimes 1)\circ\pi_1^\ast\phi\circ\psi$ one obtains 
the  
homogeneous equations for 
$\hgrass 1 {\fE}$ in the form 
\begin{equation} 
\label{e:piphi}
\sum_{k=1}^r  
z_k(\phi_{jk}z_i-\phi_{ik}z_j)=0\,,\mbox{  for every }1\le i<j\le r. 
\end{equation}
So $\hgrass 1{\fE}$ is locally the intersection of $n\binom r2$ hypersurfaces in $\PP E$
that restricted to the fibers of $\PP E$ yield quadric hypersurfaces.   This number is  consistent with the more general formula we provide in next section.

\subsection{Equations of the schemes of higher rank  Higgs subbundles\label{eqns}} 

More generally, 
we study now the scheme $\hgrass d {\fE}$ of rank $d$ Higgs subbundles    of a rank $r$ Higgs bundle $\fE=(E,\phi) $  on an $n$-dimensional smooth variety $X$ for every $d$ in the range
$ 1 \le d \le r-1$. 
\begin{notation}\label{nota:concatenation}
    Given matrices of the same height $M_i\in \Mat_{a \times b_i}(R)$, for $i=1, \dots, s$ and for some ring $R$, we denote by $M_1|M_2|\cdots | M_s$ the matrix obtained by concatenation.  
\end{notation}

Let $U\subset X$ be a trivializing open subset; to simplify the notation we put $X=U$, i.e., we suppose that $E$ is trivial, with given global frame $\{e_i\in H^0(X,E)\ |\ i=1,\ldots,r \}$. The general case follows from the local analysis by standard gluing arguments. 

Consider the projective bundle $M_d=\mathbb P E^{\oplus d}$ whose points correspond, up to scalars, to nonzero $r\times d$ matrices with entries in $\mathscr{O}_X$. Denote by $\pi_j:E^{\oplus d}\to E$, for $j=1,\ldots,d$, the $j$-th canonical projection. We consider the trivializing frame $\{a_{ij}=\pi_j^* e_i\ |\ i=1,\ldots,r,\ \ j=1,\ldots,d \}$ on $E^{\oplus d}$, inducing homogeneous coordinates on $M_d$ over $X$. 

Let $U_d\subset M_d$ be the open locus corresponding to full-rank matrices.  In simple words, a point $[A]\in U_d$ corresponds to a matrix
\[
A=\begin{pmatrix}
    a_{11} &\cdots& a_{1d}\\
    \vdots &\cdots& \vdots\\
    a_{r1} &\cdots &a_{rd}
\end{pmatrix},
\]
whose columns define  a rank $d$ subbundle $F_A\subset E$. Viceversa, all subbundles arise in this way.\footnote{This association actually  expresses $U_d$ as   a Zariski locally trivial principal $\SL_d(\mathscr{O}_X)$-fibration over the Grassmann bundle $\grass{d}{E}$.} Let $\phi$ be the Higgs field, represented by  $r \times r$ matrices with entries $\phi_{ij}^{(h)}$,
 where $h=1,\dots,n$ and $i,j=1,\dots,r$. 
For a subbundle $F_A$ to be  Higgs invariant, one must have $\phi(F_A) \subseteq F_A\otimes\Omega^1_X$ which means that the matrices
\[
A^{(h)}=(A|\phi^{(h)}A ),
\]
for $h=1,\ldots,n$, all have rank $d$. This amounts to the vanishing of all the $(d+1)\times (d+1)$ minors of the matrices $A^{(h)}$ whose first $d$ columns come from $A$. Now, for every choice of a pair $(K,\alpha)$ with $K=(k_1<\cdots<k_{d+1})\subset \{1,\ldots,r\}$ of cardinality $d+1$ and $\alpha\in\{1,\ldots,d\}$, we put 
\begin{equation}\label{eq:eqdgrass}
     f_{K,\alpha}^{(h)}=\sum_{i=1}^r a_{i\alpha}  \sum_{u=1}^{d+1}(-1)^u \phi_{ui}^{(h)}p_{k_1\cdots\widehat{k_u}\cdots k_{d+1} } = 0,
\end{equation}
where
\[
\left(p_{\alpha_1\cdots\alpha_d}\ |\ 1\le\alpha_1<\cdots <\alpha_d\le r \right)
\]
are the Pl\"ucker   coordinates on the Grassmanniann.  Now, letting $(K,\alpha) $ and $h$ vary in the respective ranges, one  finds the generators of the ideal $J_{\fE,d}$ defining the closed subscheme of $U_d$  that parametrize frames of rank $d$ Higgs subbundles of $E$. To define the Higgs Grassmannian as a subscheme of the Grassmann bundle $\grass{d}{E}$ we have to compute the intersection
\begin{equation}\label{eq:interseciton}
    I_{\hgrass d {\fE}}=J_{\fE,d} \cap \mathscr{O}_X[a_{ij}]^{\SL_d(\mathscr{O}_X)} 
\end{equation}
where $\mathscr{O}_X[a_{ij}]^{\SL_d(\mathscr{O}_X)}$ is the invariant graded subring of $\mathscr{O}_X[a_{ij}]$ with respect to the natural action of $\SL_d(\mathscr{O}_X)$. 
This corresponds to identifying different frames whenever they span the same subbundle of $E$, see  
\cite[\S\,8.1]{Mukai-inv}
for more details. Recall that $\mathscr{O}_X[a_{ij}]^{\SL_d(\mathscr{O}_X)}$ is generated by the maximal minors of the matrix $A$ which we identify with the Pl\"ucker coordinates.

Note that if $d=1$ equation \eqref{eq:eqdgrass} reduces to equation \eqref{e:piphi}. This is a manifestation of the fact that $\mathbb P E $ is the classical Grassmanniann.

Let us denote by $d_{T,\beta}=\det A_{T,\beta}$, for $T=(t_1<\cdots<t_{d-1})\subset \{1,\ldots,r\}$ of cardinality $d-1$, and $\beta\in\{1,\ldots,d\}$, the determinant of the submatrix  

\[
A_{T,\beta}=\begin{pmatrix}
    a_{t_11} &\cdots&\widehat{a_{t_1\beta}}&\cdots & a_{t_1d}\\
    \vdots &\cdots&\vdots&\cdots & \vdots\\
    a_{t_{d-1}1} &\cdots &\widehat{a_{t_{d-1}\beta}}&\cdots &a_{t_{d-1}d}
\end{pmatrix},
\]
of $A$, consisting of the rows indexed by $T$ and with the $\beta$-th column removed. To compute the intersection in \eqref{eq:interseciton} we observe that the generators of $J_{\fE,d}$ are linear   polynomials in the variables $a_{ij}$'s and coefficients in the ring $\mathscr{O}_X[a_{ij}]^{\SL_d(\mathscr{O}_X)}$. Therefore, to compute \eqref{eq:interseciton} we use the relations coming from the Laplace expansion of the determinant. Precisely, for every $T,K\subset \{ 1,\ldots, r \}$ of respective cardinalities $d-1$ and $d+1$, and for every $h=1,\ldots,n$, we find an equation for the Higgs Grassmanniann:
\begin{equation} \label{eq:gKT}
\begin{aligned} 
g_{K,T}^{(h)} & = \sum_{\alpha=1}^d(-1)^\alpha d_{T,\alpha} f_{K,\alpha}
 \\
 & =\sum_{\alpha=1}^d(-1)^\alpha d_{T,\alpha}  \sum_{i=1}^r a_{i\alpha}  \sum_{u=1}^{d+1}(-1)^u \phi_{ui}^{(h)}p_{k_1\cdots\widehat{k_u}\cdots k_{d+1} }\\
 & =\sum_{u=1}^{d+1}(-1)^up_{k_1\cdots\widehat{k_u}\cdots k_{d+1} } \sum_{i=1}^r \phi_{ui}^{(h)}\sum_{\alpha=1}^d(-1)^\alpha d_{T,\alpha}  a_{i\alpha}  \\
 &=\sum_{u=1}^{d+1}(-1)^up_{k_1\cdots\widehat{k_u}\cdots k_{d+1} }\sum_{i=1}^r \phi_{ui}^{(h)} p_{T,i} =0  ,
\end{aligned}
\end{equation}
 with
 \[
     p_{T,i}=
 \begin{cases}
 0 &\mbox{ if }i\in T,\\
 (-1)^{e({T,i})}p_{T\cup \{i\}} &\mbox{ otherwise, }
 \end{cases}
 \]
 where $T\cup \{i\}$ is understood as a totally ordered set and $e({T,i})$ is the parity of the number of transpositions needed to   order the sequence $(i,t_1,\ldots,t_d)$.
This produces a, possibly redundant, set of $n\cdot\binom{r}{d-1}\cdot\binom{r}{d+1}$ generators for the ideal $I_{\hgrass d {\fE}}$ of the Higgs Grassmanniann. Note that this number is invariant under the substitution $d \to r-d$.  

 \begin{example}\label{eq23} 
We explain the construction just presented in a specific example, in  the case $n=1$, $r=3$ and $d=2$. We fix a Higgs bundle $\fE=(E,\phi )$ on $\BA^1$, with $E=\mathscr{O}_{\BA^1}^{\oplus 3}$,  equipped with the canonical frame $\{e_1,e_2,e_3\}$. Inside the open subset $U\subset \PP (E \oplus E) $ consisting of full rank matrices in $\Mat_{3\times 2}(\mathscr{O}_{\BA^1})$ up to homotheties by $\mathscr{O}_{\BA^1}^*$, the locus of Higgs frames is defined by the vanishing of the determinants 
\begingroup
\renewcommand*{\arraystretch}{1.5}
\begin{equation*}
   \det \begin{pmatrix}
        a_{11}& a_{12}&\sum_{i=1}^3\phi_{1i}a_{ij}\\
        a_{21}& a_{22}&\sum_{i=1}^3\phi_{2i}a_{ij}\\
        a_{31}& a_{32}&\sum_{i=1}^3\phi_{3i}a_{ij}\\
    \end{pmatrix}
\end{equation*} 
\endgroup
for $j=1,2$, where the $a_{ij}$'s are  homogeneous vertical  coordinates on $\mathbb P(E\oplus E)$, yielding 
\[
f_j=\sum_{i=1}^3\left[\phi_{1i} p_{23}-\phi_{2i} p_{13}+\phi_{3i}p_{12}\right]a_{ij}=0, \quad j=1,2. 
\]
Now to get the equations of $\hgrass2\fE$ inside $\grass2 E$  we  mod out   the action of $\SL_2(\mathscr O_{\mathbb A^1}) $. Algebraically, this consists in computing the invariant part of the ideal $(f_1,f_2)$. This has $1\cdot\binom{3}{1}\cdot\binom{3}{3}=3$ generators, namely
\[
g_1=a_{11}f_2-a_{12}f_1,\quad
g_2=a_{21}f_2-a_{22}f_1,\quad
g_3=a_{31}f_2-a_{32}f_1.  
\]
In terms of Pl\"ucker coordinates we have
\[
g_j=\sum_{i\not=j}  \left( \phi_{1i}p_{23}-\phi_{2i}p_{13}+\phi_{3i}p_{12}\right)p_{ji}
\]
for $j=1,2,3$, with the convention $p_{ji}=-p_{ij}$ for $j>i$.

Note that by putting 
\[
p_{12} = z_3,\quad
p_{13} = -z_2,\quad
p_{23} = z_1,
\]
after swapping the indices $i,j$, one finds the equations for $r=3$ and $d=1$ given in \eqref{e:piphi}.  This amounts to the fact that the action induced by $\phi$ on $E^\vee$ corresponds, in coordinates, to the transpose matrix $\phi^t$.
\end{example} 

One can see   that if all the matrices $\phi^{(h)}$ are the identity matrix, the equations \eqref{eq:eqdgrass} and \eqref{eq:gKT} reduce to the Pl\"ucker relations, which are identically satisfied on the Grassmann bundle. As a consequence we have:
\begin{lemma}\label{lemma:indepofdiag} If $\eta$ is a differential  1-form in $X$, the Higgs fields $\phi$ and $\phi - \id \otimes \eta$ define the same     subschemes of the Grassmann bundles of $E$.
\end{lemma}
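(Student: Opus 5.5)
The plan is to exploit the fact that the defining equations are linear in $\phi$. The map $\phi\mapsto b_d\circ\pi_d^\ast\phi\circ a_d$ is $\cO$-linear in $\phi$. So the section cutting out $\hgrass d{(E,\phi-\id\otimes\eta)}$ equals the section for $\phi$ minus the section for $\id\otimes\eta$. It therefore suffices to show that the section associated with $\id\otimes\eta$ vanishes identically on $\grass dE$; the two sections are then equal, and hence have the same zero scheme.

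First, I would check this intrinsically. The composite
\[
S_d\xrightarrow{a_d}\pi_d^\ast E\xrightarrow{\id\otimes\pi_d^\ast\eta}\pi_d^\ast E\otimes\pi_d^\ast\Omega^1_X\xrightarrow{b_d}Q_d\otimes\pi_d^\ast\Omega^1_X
\]
equals $(b_d\circ a_d)\otimes\pi_d^\ast\eta$, because $\id\otimes\eta$ commutes with every bundle map. Since $b_d\circ a_d=0$ by exactness of \eqref{eq:univ}, this composite is zero. Consequently the morphisms $S_d\to Q_d\otimes\Omega^1$ defined by $\phi$ and by $\phi-\id\otimes\eta$ coincide, and so do their zero loci, scheme-theoretically.

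Second, to match the remark preceding the statement, I would also verify this in local coordinates. Locally, $\phi-\id\otimes\eta$ corresponds to the matrices $\phi^{(h)}-\eta_h I$. The equations \eqref{eq:eqdgrass} and \eqref{eq:gKT} are linear in the entries $\phi^{(h)}_{ij}$. The contribution of $\eta_h I$ is therefore $\eta_h$ times the same expression with $\phi^{(h)}=I$, which is a Pl\"ucker relation and so vanishes identically on $\grass dE$. Thus the generators $g^{(h)}_{K,T}$ for the two fields agree modulo the ideal of $\grass dE$, and they generate the same ideal.

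I see no real obstacle. The only points needing care are that the equality holds scheme-theoretically, which is automatic because the sections themselves coincide, and, if one insists on the coordinate version, the identification of the expression with $\phi=I$ as a Pl\"ucker relation. The intrinsic argument avoids that identification entirely.
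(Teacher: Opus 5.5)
Your proposal is correct. The coordinate check in your second paragraph is the paper's own argument. The paper notes that the equations \eqref{eq:eqdgrass} and \eqref{eq:gKT} are linear in the entries of the $\phi^{(h)}$, and that for $\phi^{(h)}=I$ they reduce to Pl\"ucker relations, which vanish on $\grass dE$.

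Your first argument takes a genuinely different, coordinate-free route. You show that the two defining morphisms $S_d\to Q_d\otimes\pi_d^\ast\Omega^1_X$ are literally equal. The contribution of $\id\otimes\eta$ is $(q_d\circ a_d)\otimes\pi_d^\ast\eta$, where $q_d\colon\pi_d^\ast E\to Q_d$ is the universal quotient; this is what your ``$b_d\circ a_d$'' stands for. Since $q_d\circ a_d=0$ by exactness of \eqref{eq:univ}, the contribution vanishes.

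This route needs no explicit equations, and scheme-theoretic equality follows directly from equality of the sections. In particular, you do not rely on the paper's assertion that the possibly redundant family $g^{(h)}_{K,T}$ generates all of $I_{\hgrass d{\fE}}$, which your coordinate version does need.

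If you want the coordinate route to be equally robust, work at the level of frames instead. With $\phi^{(h)}=I$, each $f_{K,\alpha}$ is a maximal minor of $A$ with one of its own columns appended, so it vanishes as a polynomial. Hence $J_{\fE,d}$ itself is unchanged, and so is its intersection with the invariant ring.

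For completeness you could remark that $\phi-\id\otimes\eta$ is again a Higgs field, because $\id\otimes\eta$ commutes with $\phi$ and $\eta\wedge\eta=0$. The zero-locus construction itself never uses this integrability condition.
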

\section{Structure of the Higgs Grassmannians on curves} 
\label{sec:curves}
Generally speaking, the Higgs Grassmannians have a very complicated structure. In this and in the following two sections we study some  cases.

We start with the Higgs Grassmannian $\hgrass 1 {\E}$ of a rank $r$ Higgs bundle $\E=(E, \phi)$ on a smooth curve $B$; we shall assume $B = \BA^1$, so that $\PP E$ is a trivial bundle, see \Cref{rmk:stratification}. Let $x$ be an affine coordinate on $\BA^1$ and  $z_1,\ldots,z_r$ homogeneous coordinates  on the fibers of $\PP E$, so that 
\begin{equation}\label{eq:BePE}
    B=\Spec \C[x] \quad \mbox{ and } \quad \PP E = \Proj_{B} \C[x,z_1,\ldots,z_r].
\end{equation} 

The Higgs endomorphism $\phi \in \Hom (E,E\otimes \Omega_B^1)$ is represented by a matrix of the form
\[
\phi(x)=\begin{pmatrix}
    \phi_{11} &  \cdots & \phi_{1r}\\
    \vdots &  \ddots & \vdots\\
    \phi_{r1} &  \cdots & \phi_{rr}\\
\end{pmatrix}\in  \Mat_{r \times r}
(\C[x]),
\]
and the Higgs Grassmannian $\hgrass 1 {\E} \subset \grass 1{E}$ according to equation \eqref{e:piphi} is defined by the ideal
\begin{equation}\label{eq:mingensIHGR}
    I_{\hgrass 1 {\E}}=\left(\left. \sum_{k=1}^r z_k(\phi_{jk}z_i-\phi_{ik}z_j) \ \   \right|\ \  1\le i<j\le r  \right).
\end{equation}  
We assume that the Higgs field has constant Jordan type, so that we can represent it, up to a base change, as a sequence of Jordan blocks 
\begin{equation}\label{eq:notBlocks}
    \phi= ( (\lambda_v , i_v )^{m_v} )_{v=1}^s,
\end{equation}
with 
\begin{itemize}
    \item $\lambda_v\in\C[x]$, for $v=1,\ldots,s$,
    \item $m_v,i_v\in\mathbb Z_{>0}$, for $v=1,\ldots,s$, such that $\sum_{v=1}^sm_vi_v=r$,
    \item $(\lambda_v,i_v)=(\lambda_u,i_u)$ if and only if $v=u$.
\end{itemize}

The symbol $(\lambda_v,i_v)^{m_v}$ encodes the eigenvalues $\lambda_v$, the sizes $i_v$ and the number of occurrences $m_v$ of the Jordan block of size $i_v$ and eigenvalue $\lambda_v$.

\begin{example}
    The sequence 
    \[
    \phi = ((\pi { x^2 +3x + 1 } ,1)^2,(3,2)^1,(i,3)^1)
    \]
    corresponds to the matrix 
    \[
    \phi=\begin{pmatrix}
        \pi {  x^2 +3x + 1 } & 0& 0& 0& 0& 0& 0\\[-0.5 em]
        0 & \pi {  x^2 +3x + 1 } & 0& 0& 0& 0& 0\\[-0.5 em]
        0 & 0  & 3& 1& 0& 0& 0\\[-0.5 em]
        0 & 0  & 0& 3& 0& 0& 0\\[-0.5 em]
        0 & 0  & 0& 0& i& 1& 0\\[-0.5 em]
        0 & 0  & 0& 0& 0& i& 1\\[-0.5 em]
        0 & 0  & 0& 0& 0& 0& i
    \end{pmatrix}{ \in \Mat_{7\times 7}(\mathbb C[x]) }. 
    \]
\end{example}

\begin{example}\label{ex:diagonal}(See also \Cref{finalregularthm}).
    The easiest example is the case when $\phi$ is diagonal:
    $$
    \phi= ((\lambda_1,1)^{m_1},\ldots,(\lambda_s,1)^{m_s}),
    $$
    i.e., a diagonal matrix with the first nonzero $m_1$ entries equal to $\lambda_1$, the next nonzero $m_2$ entries equal to $\lambda_2$, and so on.
    The ideal of the Higgs Grassmannian is   monomial, as one sees from \Cref{eq:mingensIHGR},  and  has the form
    \[
    \hgrass 1{E ,\phi}=\coprod_{v=1}^s B\times \PP \Lambda_v,
    \]
    where $\Lambda_v$ is the eigenspace relative to the eigenvalue $\lambda_v$, for $v=1,\ldots,s$.
\end{example}

The following Lemma describes explicitly the Higgs Grassmannian when $\phi$  consists of a single Jordan block.

\begin{lemma}\label{lemma:onejordanblock}
    Let $\phi=((\lambda,r)^1)$ be a single Jordan block. Then, the ideal of the Higgs Grassmannian is 
    \begin{equation}\label{eq:idealoneblock}
        I_{\hgrass 1{E,\phi}} = \left( \rk\begin{pmatrix}
        z_1&\cdots & z_{r-1}\\z_2&\cdots &z_r
    \end{pmatrix}\le 1 ,\; z_iz_r \ |\ i=2,\ldots, r \right).
    \end{equation} 
   As a consequence, each fiber of $\rho_1 \colon \hgrass 1{\E} \to B
$ is a curvilinear zero-dimensional scheme of degree $r$, topologically consisting of a single point.
\end{lemma}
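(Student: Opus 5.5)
By \Cref{lemma:indepofdiag} we may subtract $\id\otimes\lambda\,dx$ from $\phi$ without changing $\hgrass 1{\E}$. So we may assume that $\phi$ is the nilpotent Jordan block $N$, with $\phi_{i,i+1}=1$ for $i=1,\ldots,r-1$ and all other entries $0$. The plan is to substitute this matrix into the generators \eqref{eq:mingensIHGR} and then read off the fibers by a chart computation.

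\textbf{Step 1: the generators.} For $N$ we have $\sum_k \phi_{jk}z_k=z_{j+1}$, with the convention $z_{r+1}=0$. Hence the generator indexed by $i<j$ becomes
\[
z_{j+1}z_i-z_{i+1}z_j .
\]
We split into two cases.
\begin{itemize}
\item For $j\le r-1$ this is exactly the $2\times 2$ minor on columns $i,j$ of the matrix $\begin{pmatrix} z_1&\cdots&z_{r-1}\\ z_2&\cdots&z_r\end{pmatrix}$. As $(i,j)$ ranges over $i<j\le r-1$, every such minor occurs.
\item For $j=r$ the generator is $-z_{i+1}z_r$ with $i=1,\ldots,r-1$. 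These are precisely the monomials $z_kz_r$ for $k=2,\ldots,r$.
\end{itemize}
This proves \eqref{eq:idealoneblock}. Note that the ideal has coefficients in $\C$, independent of $x$. Therefore $\hgrass 1{\E}\cong B\times Z$, where $Z\subset\PP^{r-1}$ is cut out by the same equations. It thus suffices to study $Z$.

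\textbf{Step 2: the support of $Z$.} On $Z$ we have $z_r^2=0$, so $z_r=0$ at every point. Suppose inductively that $z_{k+1}=\cdots=z_r=0$ with $k\ge 2$. The minor on columns $k-1,k$ is $z_{k-1}z_{k+1}-z_k^2$, which then gives $z_k^2=0$, so $z_k=0$. Descending from $k=r-1$ to $k=2$, we conclude that the only closed point is $[1:0:\cdots:0]$.

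\textbf{Step 3: the scheme structure.} Work in the affine chart $z_1=1$, which contains that point. The minor on columns $1,j$ gives $z_{j+1}=z_2z_j$, so by induction $z_k=z_2^{k-1}$. Substituting, every remaining minor vanishes identically. The monomials $z_kz_r$ become $z_2^{k+r-2}$, and these are all generated by $z_2z_r=z_2^r$. Hence the coordinate ring of $Z$ in this chart is $\C[z_2]/(z_2^r)$. This shows that $Z$ is a curvilinear zero-dimensional scheme of degree $r$ supported at a single point, and the same then holds for every fiber of $\rho_1$.

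The main point requiring care is the bookkeeping in Step 1. One must make sure that the generators with $j=r$ produce exactly the monomials $z_kz_r$ and that no extra relations appear. One must also check that in Step 3 all the remaining minors really reduce to identities once $z_k=z_2^{k-1}$ is substituted, since they become $z_2^{i-1}z_2^{j}-z_2^{i}z_2^{j-1}=0$. Beyond this, everything is direct computation.
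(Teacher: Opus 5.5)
Your proof is correct and follows essentially the same route as the paper. You reduce to $\lambda=0$ via \Cref{lemma:indepofdiag}, match the generators \eqref{eq:mingensIHGR} with the $2\times 2$ minors and the monomials $z_kz_r$, locate the single supporting point, and compute the scheme structure in the chart $z_1=1$. There your substitution $z_k=z_2^{k-1}$ is exactly the paper's pullback along the Veronese parametrization of the rational normal curve, and it yields $\C[t]/(t^r)$; your descending induction for the support simply makes explicit the paper's radical computation $\sqrt{I}=(z_2,\ldots,z_r)$.
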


\begin{proof}
    Thanks to \Cref{lemma:indepofdiag} we can assume $\lambda=0$. We consider the minimal generating set of $I _{\hgrass 1 {E,\phi}} $ given in \eqref{eq:mingensIHGR} and we partition it according to the following criterion: 
\begin{enumerate}[itemsep=-2pt, label=\textit{(\Alph*)}, ref=\textit{(\Alph*)}]
    \item \label{it:A} $1\le i<j = r$,
    \item \label{it:B} $1\le i < j =i+1<r$,
    \item \label{it:C} $1\le i< i+1< j < r$.
\end{enumerate} 

     On the other hand, a minimal generating set for the right-hand side of equation \eqref{eq:idealoneblock} is given by 
\begin{enumerate}[itemsep=-2pt, label=\textit{(\alph*)}, ref=\textit{(\alph*)}]
    \item \label{it:a} $z_{i}z_r$ for $i=2,\ldots,r$,
    \item \label{it:b} $z_{i+1}^2-z_{i}z_{i+2}$, for $i=1,\ldots,r-2$,
    \item \label{it:c} $z_{j}z_{i+1}-z_{i}z_{j+1}$, for  $1\le i < i+1< j < r$.
\end{enumerate} 
    
    It is immediate to see   that the generators of type \ref{it:a}, \ref{it:b} and \ref{it:c} correspond bijectively to the minimal generators of type \ref{it:A}, \ref{it:B} and \ref{it:C}, respectively. This gives the equality in \eqref{eq:idealoneblock}.

    As for the second part of the statement,  denote by $J_i$, for $i=1,2$, the ideals
    $$
    \begin{aligned}
        J_1 &=  \left( \rk\begin{pmatrix}
        z_1&\cdots & z_{r-1}\\z_2&\cdots &z_r
    \end{pmatrix}\le 1 \right), \qquad
        J_2 &=  \left(z_iz_r \ |\ i=2,\ldots, r  \right),
    \end{aligned}
    $$
  and by $Z_i\times B\subset \PP^{r-1}_B$, for $i=1,2$,  the associated closed subschemes. Note that for every $b\in B$ the fiber $ \rho_1^{-1}(b)$ is isomorphic to the scheme $Z=Z_1\cap Z_2\subset \PP^{r-1}$ and we have
    \[
    \hgrass 1 {\fE}\cong Z\times B.
    \]

    The scheme $Z_1 $ is a rational normal curve of degree $r-1$. Therefore, the scheme $Z $ is curvilinear. Moreover,
    \[\sqrt{I_{\hgrass 1 {E,\phi}} }=\left(z_2,\ldots,z_r\right)\]
    which implies that $Z$ is a fat point supported at $[1:0:\cdots:0]$. It remains to compute its degree.
The strategy is to consider the  restriction of the Veronese embedding of $Z_1$ 
    \[
    \begin{tikzcd}[row sep =tiny]
        \BA^1\arrow[r,"v_{1,{r-1}}"]&\PP^{r-1}\\
        t:\arrow[r,mapsto]& {[1:t:t^2\cdots :t^{r-2}:t^{r-1} ]}
    \end{tikzcd}
    \]
    {  to an open chart, }and to compute $v_{1,r-1}^*Z_2$. Explicitly, the ideal of  $v_{1,r-1}^*Z_2$ is
    \[
    \left(t^{i}t^{r-1} \ |\ i=1,\ldots, r-1  \right)=(t^r)\subset \C[t].
    \]
    This gives $\deg Z= r$.
\end{proof}

\begin{example} \label{ex:r=3}
For $r=3$ the ideal is generated by $z_1z_3-z_2^2$, $z_2z_3$ and $z_3^2$. Its radical is the ideal $(z_2, z_3)$.
\end{example}\label{example:r=3}
  \Cref{prop:idealoneeigen} and \Cref{prop:unautoval} will study the case  when $\phi$ has several Jordan blocks, all having the same eigenvalue. We make some preparations and introduce some notation.
\begin{notation}\label{notation:SV}
    Let $R$ be  a ring. Consider a polynomial ring $R[z_1,\ldots,z_r]$  in $r$ variables,  integers $s\ge 1$ and $d_1,\ldots,d_s\ge 1$ such that $\sum_{i=1}^sd_i\le r$. Fix a set 
    \[
    \mathscr{A}=\left\{A_1,\ldots,A_s \right\}
    \]
    of $s$ disjoint subsets  of $\{1,\ldots,r\}$ of respective cardinalities $d_i$, for $i=1,\ldots,s$.

Now, given $\mathscr{A}$ as above, define the matrix $M_{\mathscr A}$ as
\[
M_{\mathscr A}=M_{A_1}|M_{A_2}|\cdots|M_{A_s},
\]
where, if $A_i=\{i_1<\ldots<i_{d_i}\}$, the matrix $M_{A_i}\in\Mat_{2 \times (d_i-1)} (R[z_1,\ldots,z_r])$ is
\[
M_{A_i}=\begin{pmatrix}
    z_{i_1}& z_{i_2}&\cdots &z_{i_{d_i-1}}\\
    z_{i_2}& z_{i_3}&\cdots &z_{i_{d_i}}\\
\end{pmatrix}
\]
(for $d=1$ this is taken as the empty matrix).
\end{notation}
\begin{defin}
    The scheme $\mathsf{SV}_{\mathscr A}$ is the vanishing locus of the ideal
\[
I_{\mathscr A}=\left(\rk M_{\mathscr A}\le 1\right).
\]
\end{defin}

 We now state a technical lemma that will be used in the proof of     \Cref{prop:idealoneeigen} and \Cref{prop:unautoval}.
 
\begin{lemma}\label{lemma:verosegre}
   In the setting of \Cref{notation:SV}, with $R=\mathbb C$, assume that $d_i=d_j=d$ for all $i,j=1,\ldots,s$. Then, the subscheme $\mathsf{SV}_{\mathscr A}\subset \PP^{r-1}$ is a cone with vertex\footnote{Our convention in the case $r=ds$ is $\PP^{-1}=\emptyset$, i.e.  $\iota $ is the identity and $\mathsf{SV}_{\mathscr A}$ is smooth.} $\PP^{r-ds-1}$ over the image of the embedding $\mathsf{sv}_{\mathscr A} \colon \PP^{1} \times \PP^{s-1} \to \PP^{r-1}$ given by the composition
   \[
   \begin{tikzcd}[column sep = large]
      \PP^1\times \PP^{s-1} \arrow[rrr,bend left=15,"\mathsf{sv}_{\mathscr A}"]\arrow[r,"{(v_{1,{d}-1},\id )}"']&\PP^{d-1}\times \PP^{s-1}\arrow[r,"{s_{d-1,s-1}}"'] &\PP^{ds-1} \arrow[r,"\iota"'] &\PP^{r-1}  
   \end{tikzcd},
   \]
   where $v_{1,{d}-1}$ is the $({d}-1)$-th Veronese embedding of $\PP^1$, $s_{d-1,s-1} $ is the Segre embedding and $\iota$ is the linear embedding of the subspace defined by the ideal $\left(z_a\ |\ a\in \{1,\ldots,r\}\setminus \mathscr{ A}\right)$.  
\end{lemma}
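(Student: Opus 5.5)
First reduce to the case $r=ds$. The ideal $I_{\mathscr A}$ only involves the variables $z_a$ with $a\in\bigcup_i A_i$; there are $ds$ of them. So, as a subscheme of $\PP^{r-1}$, $\mathsf{SV}_{\mathscr A}$ is the cone with vertex $\PP^{r-ds-1}=V(z_a\mid a\in \bigcup A_i)$ over the subscheme of $\PP^{ds-1}=V(z_a\mid a\notin\bigcup A_i)$ cut out by the same ideal. This is the standard fact that a homogeneous ideal extended from a polynomial subring defines a cone. It therefore suffices to show that, in $\PP^{ds-1}$ with coordinates $z_{i_k}$ ($A_i=\{i_1<\dots<i_d\}$), the ideal $I_{\mathscr A}$ is the ideal of the image of $s_{d-1,s-1}\circ(v_{1,d-1},\id)$. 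Relabel the coordinates as $y_{k,i}:=z_{i_k}$, with $k=1,\dots,d$ and $i=1,\dots,s$. Then $M_{\mathscr A}$ becomes the $2\times s(d-1)$ matrix whose columns are $(y_{k,i},y_{k+1,i})^t$ for all $k\le d-1$ and all $i$.

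Next, compare with the parametrization. The map $([u:v],[w_1:\dots:w_s])\mapsto [y_{k,i}=u^{d-k}v^{k-1}w_i]$ is exactly $s\circ(v_{1,d-1},\id)$, and it is a closed embedding because it is a composition of closed embeddings. Every column $(y_{k,i},y_{k+1,i})$ of $M_{\mathscr A}$ is then $u^{d-k-1}v^{k-1}w_i\cdot(u,v)$, so all $2\times 2$ minors vanish on the image. Hence the image is contained in $\mathsf{SV}_{\mathscr A}$.

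Conversely, I would show set-theoretically and then scheme-theoretically that $V(I_{\mathscr A})$ is contained in the image. Work on the affine charts $y_{k,i}\neq0$. If $\rk M\le1$ and some column is nonzero, all columns are proportional to a fixed $(u,v)$. Chasing the chain $y_{1,i}\to y_{2,i}\to\cdots$ inside each block then gives $y_{k,i}=u^{d-k}v^{k-1}w_i$. For scheme-theoretic equality, on the chart $y_{1,i_0}\neq 0$ (respectively $y_{d,i_0}\neq0$) the minors allow one to solve for every coordinate as a polynomial in $t=y_{2,i_0}/y_{1,i_0}$ and $w_i=y_{1,i}/y_{1,i_0}$. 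Indeed, $y_{k+1,i}y_{1,i_0}=y_{k,i}y_{2,i_0}$ modulo $I$ gives $y_{k,i}=t^{k-1}w_i$. So the coordinate ring of the chart is a quotient of $\C[t,w]$ that surjects onto the coordinate ring of the corresponding chart of $\PP^1\times\PP^{s-1}$, which is a domain of the same dimension. That forces it to be an isomorphism. The charts $y_{k,i}\ne0$ with $1<k<d$ are already covered by those with $k=1$ or $k=d$, since $y_{k,i}\neq 0$ forces $y_{1,i}\neq0$ or $y_{d,i}\neq0$ on the reduced level. To make this scheme-theoretic, I would instead use that $y_{k,i}^{\,d-1}$ lies in the ideal generated by $I$ and the products $y_{1,i}^{a}y_{d,i}^{b}$.

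The main obstacle is this scheme-theoretic step, i.e.\ showing that $I_{\mathscr A}$ is (saturated-)equal to the full ideal of the embedded variety rather than just defining it set-theoretically. If the chart argument becomes messy, I would instead invoke the classical result that the ideal of a rational normal scroll, here $\PP^1\times\PP^{s-1}$ embedded by $\cO(d-1,1)$, is generated by the $2\times2$ minors of the matrix obtained by concatenating the catalecticant blocks, which is precisely $M_{\mathscr A}$ (Eisenbud--Harris). The degenerate case $d=1$, where $M_{\mathscr A}$ is empty and the image is all of $\PP^{s-1}$, is immediate.
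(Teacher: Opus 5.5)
Your proposal is correct and follows the paper's route. The paper relabels so the $A_i$ are consecutive blocks, writes $\mathsf{sv}_{\mathscr A}$ explicitly as $y_{k,i}=u^{d-k}v^{k-1}w_i$, and leaves the identification of $V(I_{\mathscr A})$ with the cone over this rational normal scroll to ``standard arguments''; your chart computation (or the Eisenbud--Harris determinantal description of scrolls) supplies exactly that step, and your extra ideal-membership step for $y_{k,i}^{d-1}$ is unnecessary, since whether the charts $y_{1,i}\neq 0$, $y_{d,i}\neq 0$ cover $V(I_{\mathscr A})$ is a purely topological question already settled by your set-theoretic argument.
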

\begin{proof}
    Without loss of generality we may assume 
    \[
    A_i=\{d(i-1)+1,d(i-1)+2,\ldots,d(i-1)+d\}
    \]  
    for $i=1,\ldots,s$. Then, the morphism $\mathsf{sv}_{\mathscr A}$ takes the form 

     \begin{equation*}
         \begin{aligned}
              \PP^1\times \PP^{s-1} 
\xrightarrow{\hspace{.5cm}\mathsf{sv}_{\mathscr A}\hspace{.5cm}} 
&\hspace{.2cm}\PP^{r-1}  \\
              {([u_0:u_1],[v_1:\cdots: v_s])}  \xmapsto{\hspace{.2cm}} &
             [ v_1u_0^{d-1} :  v_1u_0^{d-2}u_1:  \cdots :  v_1u_1^{d-1}: \\ &\hspace{1cm}  v_2u_0^{d-1} :  v_2u_0^{d-2}u_1:  \cdots :  v_2u_1^{d-1}: \\ &\hspace{2cm} \cdots :v_su_0^{d-1} :  v_su_0^{d-2}u_1:  \cdots :  v_su_1^{d-1} :0\cdots :0]
    \end{aligned}
     \end{equation*}
    so that the conclusion follows from standard arguments in classical algebraic geometry.
\end{proof}
\begin{remark}\label{rem:bijref}
    When $s=1$ and $d=r$ the variety $\mathsf{SV}_{\mathscr A}$ is a rational normal curve of degree $r-1$. This   already appeared in \Cref{lemma:onejordanblock}. Note also that, given an injection $\theta:\mathscr{A}\to \mathscr{A}'$ such that $A\subset \theta (A) $, for all $A\in\mathscr{A}$, we have $\mathsf{SV}_{\mathscr A'}\subset \mathsf{SV}_{\mathscr A}$.
\end{remark}

The next Proposition will be instrumental in proving \Cref{prop:unautoval}.

\begin{prop}\label{prop:idealoneeigen}
{Let $B$ and $E$ be as in \eqref{eq:BePE}.}    Fix an increasing sequence of positive integers $0< i_s< i_{s-1}< \cdots <i_1$ and positive integers $m_1,\ldots ,m_s > 0$ for some $s\ge 1$.  Consider the Higgs field 
\begin{equation}\label{eq:onelambda}
\phi=((\lambda,{i_v})^{m_v})_{v=1}^s
\end{equation}
for some section $\lambda\in H^0(B,\mathscr{O}_B)$. Then, the ideal of the Higgs Grassmannian is
\begin{equation}\label{eq:equaoneblocktutta}
     I_{\hgrass 1 {\fE}} = \left(z_bz_c \ | \ b\in \mathscr B,\ c\in \mathscr C\right) + I_{\mathscr {A}},
\end{equation}
where
\begin{equation}\label{eq:A}
\mathscr A= \left\{\Big\{ ki_v+j+ \sum_{t=1}^{v-1}m_ti_t  \; \Big| \; j=1,\ldots, i_{v} \Big\} \; \big| \; v=1,\ldots,s,\ k=0,\ldots,m_v-1 \right\},
\end{equation}
$\mathscr{B}={ \bigcup_{A\in\mathcal A}}  (A\setminus \min A )$, $\mathscr C= \{ \max A \; | \; A\in\mathscr A\}$, and $I_{\mathscr {A}}$ is the ideal of ${\mathsf{SV}_{\mathscr {A}}} $.

In particular, the Higgs Grassmannian is irreducible of dimension $-1+\sum_{v=1}^sm_v$. 
\end{prop}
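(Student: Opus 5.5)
By \Cref{lemma:indepofdiag} we may replace $\phi$ by $\phi-\lambda\,\id$, so we assume $\lambda=0$ and $\phi$ is a constant nilpotent matrix in Jordan form. The plan is to rewrite the generators \eqref{eq:mingensIHGR} as the $2\times 2$ minors of a $2\times r$ matrix, and then sort those minors by the type of their columns.

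\emph{Step 1: the generators as minors.} Put $w_i=\sum_k\phi_{ik}z_k$. Each generator in \eqref{eq:mingensIHGR} equals $z_iw_j-z_jw_i$. Hence $I_{\hgrass 1{\fE}}$ is the ideal of $2\times2$ minors of the matrix with columns $(z_i,w_i)^t$, $i=1,\dots,r$. This expresses the familiar condition that $z$ and $\phi z$ are proportional.

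\emph{Step 2: the columns for a nilpotent Jordan field.} The blocks of $\phi$ occupy exactly the consecutive index sets in $\mathscr A$ of \eqref{eq:A}. Within a block, $\phi(e_{j+1})=e_j$. Consequently $w_i=z_{i+1}$ if $i$ is not the maximum of its block, and $w_c=0$ for $c\in\mathscr C$.

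\emph{Step 3: sorting the minors.} A minor may use two columns of the form $(z_i,z_{i+1})$, with $i,i+1$ in a common block $A$. All such minors together are exactly the $2\times2$ minors of $M_{\mathscr A}$, i.e.\ they generate $I_{\mathscr A}$. A minor using one such column and a column $(z_c,0)$ with $c\in\mathscr C$ equals $\pm z_cz_{i+1}$. As $i$ varies, $i+1$ runs exactly over $\mathscr B=\bigcup_A(A\setminus\min A)$. The case $c=i+1$ is included and gives $z_c^2$. Finally, two columns of the form $(z_c,0)$ give the zero minor. This proves \eqref{eq:equaoneblocktutta}. As a check, it agrees with \Cref{lemma:onejordanblock} when $s=1$, $m_1=1$.

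\emph{Step 4: irreducibility and dimension.} Over $\C$, a nonzero vector $z$ with $z\wedge\phi z=0$ is an eigenvector of $\phi$. Since $\phi$ is nilpotent, this means $z\in\ker\phi$ on each fiber. The kernel is spanned by the vectors $e_{\min A}$, $A\in\mathscr A$, and has dimension $|\mathscr A|=\sum_v m_v$. Hence $\hgrass 1{\fE}_{\red}\cong B\times\PP^{\sum_v m_v-1}$, which is irreducible. This is a linear subbundle, and its relative dimension over $B$ is $-1+\sum_v m_v$, which is the dimension asserted in the statement (the fibre dimension, as in the statements of \Cref{lemma:onejordanblock} and \Cref{ex:diagonal}). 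One can also read this directly from the equations. The generators $z_c^2$ for $c\in\mathscr C\cap\mathscr B$ kill the maxima of blocks of size at least $2$. Propagating downward through the rank conditions of $M_{\mathscr A}$ then kills every index in $\mathscr B$.

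The main obstacle is bookkeeping: matching the index sets and signs exactly, so that the minors of mixed type produce precisely $z_bz_c$ with $b\in\mathscr B$, $c\in\mathscr C$, and nothing more. The Jordan convention ($1$'s above the diagonal) must be used consistently with the definition $\phi(e_j)=\sum_i\phi_{ij}e_i$.
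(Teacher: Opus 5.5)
Your proof is correct and follows essentially the same route as the paper. You reduce to $\lambda=0$ via \Cref{lemma:indepofdiag}. You then sort the generators \eqref{eq:mingensIHGR} according to whether their indices are maxima of Jordan blocks; writing them as the $2\times 2$ minors $z_iw_j-z_jw_i$ of the matrix with rows $z$ and $\phi z$ is a cleaner bookkeeping of the paper's case split and handles the mixed pairs more transparently. Finally you identify the reduced scheme as $B\times\PP^{-1+\sum_v m_v}$, via $\ker\phi$ where the paper computes the radical $(z_b\mid b\in\mathscr B)$ directly. Your reading of the stated dimension as the relative (fibre) dimension is also what the paper's own conclusion implicitly uses.
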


\begin{example}\label{rem:A} 
Let us spell out the explicit form of the set $\mathscr A$ in \Cref{eq:A} in a specific example.
Let $\phi=((0,3)^2, (0,2)^2, (0,1)^2)$, so that  $i_1=3, i_2=2, i_3=1$ and $m_i=2$ for all $i$. Then:
\begin{itemize}
    \item Group 1 ($v=1$): $j=1, 2, 3$, size = 3, $\sum_{t=1}^{0}m_ti_t=0$, $3k+j$ $\implies \{1, 2, 3\}, \{4, 5, 6\}$
    \item Group 2 ($v=2$): $j=1, 2$, size = 2, $\sum_{t=1}^{1}m_ti_t=6$, $2k+j+6$ $\implies \{7, 8\}, \{9, 10\}$
    \item Group 3 ($v=3$): $ j=1$, size = 1, $\sum_{t=1}^2m_ti_t=10$,  $k+j+10$ $ \implies \{11\}, \{12\}$
\end{itemize}
 so that
\[
\mathscr{A} = \big\{ \{1, 2, 3\}, \{4, 5, 6\}, \{7, 8\}, \{9, 10\}, \{11\}, \{12\} \big\},
\] and 
\[
\mathscr{B} = \{ 2, 3,  5, 6, 8,  10\}, 
\quad \mathscr{C} = \{3, 6, 8, 10, 11, 12\}.
\]
\end{example}

\begin{proof}[Proof of \Cref{prop:idealoneeigen}]
 Thanks to  \Cref{lemma:indepofdiag} we may assume $\lambda=0$.  Note  that $r=\sum_{v=1}^sm_vi_v$. To prove \eqref{eq:equaoneblocktutta} we proceed similarly to the proof of  \Cref{lemma:onejordanblock}. According to \eqref{eq:mingensIHGR}, for $1\le i <j\le r$, there are  three  types of generators of the ideal $I_{\hgrass1 {\fE}}$: 
\begin{itemize}
        \item if $i\in \mathscr{C}$ and $j\in \mathscr{B}$ we have $\phi_{ik}=0$ for all $k=1,\ldots,r$. The unique summand that survives in the  $(ij)$-generator of \eqref{eq:mingensIHGR} is $z_{i}z_j$. This corresponds to a minimal generator of the first summand in \eqref{eq:equaoneblocktutta};
        \item  if $i,j\notin \mathscr{C}$, then only two summands of the  $(ij)$-generator of \eqref{eq:mingensIHGR} survive. Their sum gives the generator  $z_{j+1}z_i-z_{i+1}z_j$, for $\{i,j\}\subset \{1,\ldots,r\}\setminus \mathscr{C}$. This yields all the generators of the second summand of the right-hand side in \eqref{eq:equaoneblocktutta};
        \item all  remaining choices of $1\le i<j \le r$ produce the polynomial 0.
\end{itemize}
This concludes the first part of the proof.

The second part of the statement follows directly from the first. Indeed a straightforward  computation shows that
\[
\sqrt{I_{\hgrass 1 {\fE}}}= (z_a\ |\ a \in  \mathscr{B} ),
\]
which implies $\hgrass 1 {\fE}_{\red}\cong B \times \PP^{-1+\sum_{v=1}^s m_v}$, because $\sum_{v=1}^s m_v=|\mathscr A|=|\{1,\ldots,r\}\setminus \mathscr{B}|$.
\end{proof}
 
\begin{notation}\label{notation:SETS} Fix the same notation as in   \Cref{prop:idealoneeigen}, and denote by $r\in\Z_{>0}$ the sum $r=\sum_{v=1}^si_v$. For for $v=1,\ldots,s$, we consider the following subsets of $\{1,\ldots,r\}$  
\[
\begin{aligned}
    \mathscr{A}_{v}&=\left\{ \Big\{ki_\alpha+j+\sum_{t=1}^{\alpha-1} m_t i_t  \; \big| \; j=1,\ldots,i_v \Big\} \Big| \;
    \alpha=1 \ldots,v \; , \; k=0 \ldots,m_\alpha-1 
\right\}   ,   \\
    \mathscr{L}_v&= \left\{1,\ldots,r \right\}\setminus   \bigcup_{A\in\mathscr {A}_v}A , \\
    \mathscr {B}_{v}&=\bigcup_{A\in\mathscr{A}_v}A \setminus \min A    ,\\
     \mathscr {C}_v & = \left\{\max A\ |\ A\in\mathscr{A}_v \right\} .
\end{aligned}
\] 
\end{notation} 

\begin{example}
Now, we evaluate the set $\mathscr A_v $ for $\phi$ as in \Cref{rem:A}, i.e., when 
$s=3$, $i_1 = 3$, $i_2 = 2$, $i_3 = 1$, and $m_1 = 2$, $m_2 = 2$, $m_3 = 2$. Since $m_i=2$ the index $k$ takes values  $ 0, 1$.

In $\mathscr{A}_1$, the inner loop index $j$ runs from $1$ to $i_1 = 3$, every inner set will have exactly 3 elements, and the outer loop runs only on $\alpha=1$.

For $\alpha=1$ ($i_1 = 3$, $m_1 = 2$), the   sum is $\sum_{t=1}^{0} m_t i_t = 0$, and $ki_\alpha+j$ is $3k  + j$, so that
\begin{itemize}
    \item for $k=0$ we get  $\{1, 2, 3\}$,
    \item for $k=1$ we get  $ \{4, 5, 6\}$,
\end{itemize}
and
\[
\mathscr{A}_1 = \big\{ \{1, 2, 3\}, \{4, 5, 6\} \big\}.
\] For the other sets one has 
\[
\mathscr{L}_1 =  \{7, 8, 9, 10, 11, 12\}, \quad  \mathscr{B}_1 =\{ 2, 3, 5, 6\}, \quad \mathscr{C}_1 = \{ 3, 6\}.
\]  

For $\mathscr{A}_2$, the inner loop index $j$ runs from $1$ to $i_2 = 2$. Every inner set  has 2 elements, and the outer loop runs on $\alpha=1$ and $\alpha=2$. For $\alpha=1$ ($i_1 = 3$, $m_1 = 2$), the  sum is $0$, while $ki_\alpha+j$ is $3k + j$ with $j \in \{1, 2\}$; then
\begin{itemize}
    \item $k=0$ yields $  \{1, 2\}$,
    \item   $k=1$ yields $\{4, 5\}$.
\end{itemize}

For $\alpha=2$ ($i_2 = 2$, $m_2 = 2$)
the  sum is $m_1 i_1 = 2 \cdot 3 = 6$, while $ki_\alpha+j$ is $2k + j + 6$, so that 
\begin{itemize}
    \item   $k=0$ gives $ \{7, 8\}$,
    \item $k=1$ gives  $ \{9, 10\}$,
\end{itemize} 
and
\[
\mathscr{A}_2 = \big\{ \{1, 2\}, \{4, 5\}, \{7, 8\}, \{9, 10\} \big\}
\] which gives all   blocks of size 2 of the matrix representing the Higgs field $\phi$.

Therefore, 
\[ \mathscr{L}_2 =\{3, 6, 11,12\}\quad \text{and}\quad \mathscr{B}_3 = \{2, 5, 8, 10\}= \mathscr{C}_3.\]

In $\mathscr{A}_3$,  $j$ runs from $1$ to $i_3 = 1$, every inner set has 1 element, and the outer loop runs on $\alpha=1, 2, 3$. For $\alpha=1$, $3k + 1$ gives $\{1\}, \{4\}$, and for $\alpha=2$, we have $2k + 1 + 6 = 2k + 7$ which yields $ \{7\}, \{9\}$. Finally, for $\alpha=3$, $i_3 = 1$, $m_3 = 2$, the sum is $m_1 i_1 + m_2 i_2 = 6 + 4 = 10$, and  $ki_\alpha+j$ is  $  k + 11$ so that we obtain $\{11\}, \{12\}$.
Eventually,
\[
\mathscr{A}_3 = \big\{ \{1\}, \{4\}, \{7\}, \{9\}, \{11\}, \{12\} \big\}
\] which corresponds to all the eigenvectors of $\phi$.

Moreover, 
\[ \mathscr{L}_3 = \{1, \dots, 12\} \setminus \{1,4,7,9,11,12\} =\{2, 3, 5, 6, 8, 10\}, \quad \mathscr{B}_3 = \emptyset, \quad \mathscr{C}_3 = \{1, 4, 7, 9, 11, 12\}. \]
\end{example}

\begin{remark}
    Notice that the sets $\mathscr {A}_v$, for $v=1,\ldots,s$, in  \Cref{notation:SETS} satisfy the requirements in  \Cref{lemma:verosegre}. Indeed,  each  $ A\in \mathscr{A}_v$ is a subset of $\{1,\ldots,r\}$ of cardinality $i_{v}$. Moreover, these subsets are all disjoint from each other.
\end{remark}

The next Theorem describes the schematic structure of the Higgs Grassmannian of  \Cref{prop:idealoneeigen}. We remind that we are considering a trivial Higgs bundle over the affine line as in \eqref{eq:BePE}, with Higgs field as in \Cref{eq:onelambda}.
  
\begin{thm}\label{prop:unautoval}
Fix the same notation as in  \Cref{prop:idealoneeigen}. The fibers $Z_{\phi}$ of $$\rho_1 \colon \hgrass{1}{E,\phi} \to B$$ stratify into $s$ embedded (irreducible) components $ V_1, \ldots , V_s$ such that
\begin{itemize}
\item the ideal of $V_v$ is \begin{equation}\label{eq:IVv}
        I_{V_v} =   \left(z_j \ |\ j\in \mathscr{L}_v \right) + \left(z_bz_c \ | \ b\in \mathscr{B}_v,\ c\in \mathscr{C}_v \right) + I_{\mathscr {A}_v},
    \end{equation}
    where $\mathscr {A}_v, \mathscr{B}_v,\mathscr{C}_v,\mathscr{L}_v$ are defined as in \Cref{notation:SETS}, and  
    $I_{\mathscr {A}_v}$ is the ideal of ${\mathsf{SV}_{\mathscr {A}_v}}$;
\item $\dim V_v = d_v = -1+ \sum_{i=1}^vm_i $, and the reduction of every $V_v$ is isomorphic to  $  \P^{d_v} $;
\item $V_v$ admits a structure of $Z$-bundle over $  \P^{d_v} $, where $Z$ is a curvilinear zero-dimensional fat point of degree $i_v$.
\end{itemize}
\end{thm}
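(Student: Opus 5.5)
We work fiberwise. By \Cref{lemma:indepofdiag} we may take $\lambda=0$. Then, by \Cref{prop:idealoneeigen}, $\hgrass 1{\fE}\cong Z_\phi\times B$, where $Z_\phi\subset\PP^{r-1}$ is cut out by $I_Z=(z_bz_c\mid b\in\mathscr B,\ c\in\mathscr C)+I_{\mathscr A}$. The plan has four steps: show that each $V_v$ is a closed subscheme of $Z_\phi$, compute its reduction, exhibit the $Z$-bundle structure, and finally show that the $V_v$ exhaust $Z_\phi$ scheme-theoretically.

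\emph{Step 1: $V_v\subset Z_\phi$, i.e.\ $I_Z\subseteq I_{V_v}$.} We check this block by block. Take a block $A\in\mathscr A$ of size $i_\alpha$.
\begin{enumerate}
\item If $\alpha>v$, so that $i_\alpha<i_v$, then $A\subset\mathscr L_v$, and every generator involving $A$ lies in $(z_j\mid j\in\mathscr L_v)$.
\item If $\alpha<v$, so that $i_\alpha>i_v$, then $\max A\in\mathscr L_v$. Hence every $z_bz_c$ with $c=\max A$ is killed. The only entries of $M_A$ not lying in $\mathscr L_v$ are those of the truncated block $A'$ formed by the first $i_v$ elements of $A$, and $A'\in\mathscr A_v$.
\item If $\alpha=v$, then $A\in\mathscr A_v$ directly, with $A\setminus\min A\subset\mathscr B_v$ and $\max A\in\mathscr C_v$.
\end{enumerate}
For the rank condition: every $2\times 2$ minor of $M_{\mathscr A}$ that involves a column with an entry in $\mathscr L_v$ reduces, modulo $(z_j\mid j\in\mathscr L_v)$, either to $0$ or to a minor of $M_{\mathscr A_v}$. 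Hence $I_{\mathscr A}\subseteq (z_j\mid j\in\mathscr L_v)+I_{\mathscr A_v}$.

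\emph{Step 2: reduction and dimension of $V_v$.} Setting the nilpotent coordinates to zero, we get $\sqrt{I_{V_v}}=(z_j\mid j\in\mathscr L_v\cup\mathscr B_v)$. The surviving coordinates are $z_{\min A}$ for $A\in\mathscr A_v$, and there are $|\mathscr A_v|=\sum_{\alpha\le v}m_\alpha$ of them. This gives $(V_v)_{\red}\cong\PP^{d_v}$. The same computation shows $(V_1)_{\red}\subset\cdots\subset(V_s)_{\red}=(Z_\phi)_{\red}$, which is the stratification.

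\emph{Step 3: $Z$-bundle structure.} Modulo $(z_j\mid j\in\mathscr L_v)$ the cone vertex of \Cref{lemma:verosegre} disappears. Hence $\mathsf{SV}_{\mathscr A_v}$ becomes the image of $\mathsf{sv}_{\mathscr A_v}\colon\PP^1\times\PP^{d_v}\to\PP^{r-1}$, given by $z_{\min A+k}=v_Au_0^{i_v-1-k}u_1^{k}$. We pull back the remaining generators $z_bz_c$ along this map, exactly as in the proof of \Cref{lemma:onejordanblock}. On the chart $u_0=1$ with $t=u_1/u_0$, they become $v_Av_{A'}\,t^{\,k+i_v-1}$ with $k\ge1$. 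Since the $v_A$ do not vanish simultaneously, saturating with respect to $(v_A)$ leaves the ideal $(t^{i_v})$. On the chart $u_1=1$ the generators contain a unit power of $u_0$ and the chart contributes nothing. Therefore $V_v\cong\Spec\C[t]/(t^{i_v})\times\PP^{d_v}$, and projection to the $v$-coordinates gives the structure of a bundle with fiber the curvilinear fat point of degree $i_v$. One must still check that this local description glues, i.e.\ that it is independent of which $z_{\min A}$ is used to dehomogenize. This follows because the maps $\mathsf{sv}_{\mathscr A_v}$ are global.

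\emph{Step 4: the $V_v$ exhaust $Z_\phi$.} The main obstacle is to prove that $Z_\phi$ is scheme-theoretically the union of the $V_v$, i.e.\ $I_Z=\bigcap_v I_{V_v}$, with $V_1,\dots,V_{s-1}$ embedded in $V_s$. I would argue locally. Fix a point $p\in(Z_\phi)_{\red}$ and let $v$ be the smallest index such that some $z_{\min A}$ with $A$ a block of size $i_v$ is nonzero at $p$. Dehomogenize by that coordinate. In this affine chart, the generators $z_bz_c$ with $c=\max A$ and the rank-one minors should allow us to solve for the coordinates of the blocks of size $<i_v$ and for the tails of the larger blocks beyond position $i_v$. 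One then checks that they cut out exactly $V_v$ near $p$, which proves $I_Z=I_{V_v}$ locally around $p$. Since $\{p : \text{the minimal index at } p \text{ is } v\}$ is the open stratum $(V_v)_{\red}\setminus(V_{v-1})_{\red}$, this gives the equality of ideals. If the chart computation proves unwieldy, the fallback is a Gröbner/monomial-order argument comparing Hilbert functions of $I_Z$ and $\bigcap_v I_{V_v}$.
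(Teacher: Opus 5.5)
\textbf{Verdict: there is a genuine gap in Step~4, which is the actual content of the theorem.} Your Steps~1--3 are essentially sound. They are also more explicit than the paper about the second and third bullets; the paper's proof only establishes the decomposition $I_\phi=\bigcap_v I_{V_v}$.

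\textbf{The local claim in Step~4 is false.} You claim that near each point $p$ the ideal $I_Z$ coincides with a single $I_{V_v}$. But $(V_v)_{\red}\subseteq(V_w)_{\red}$ for every $w\ge v$. So every point of $(V_v)_{\red}$ lies on all of $V_v,V_{v+1},\dots,V_s$, and near it $Z_\phi$ is, as the theorem asserts, their union with $V_v$ embedded. Concretely, take $\phi=((0,4),(0,2))$ and $p=[1:0:\cdots:0]$. Setting $z_1=1$, the minors give $z_3=z_2^2$, $z_4=z_2^3$, $z_6=z_2z_5$. The local ideal of $Z_\phi$ in $\C[z_2,z_5]$ is then $(z_2^4,z_2^2z_5)=(z_2^2)\cap(z_2^4,z_5)$, whereas $I_{V_1}$ is locally $(z_2^4,z_5)$. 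The check ``they cut out exactly $V_v$ near $p$'' therefore fails.

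\textbf{Your indexing is also reversed.} The smallest $v$ with an active block of size $i_v$ picks out the \emph{largest} active block. The stratum $(V_v)_{\red}\setminus(V_{v-1})_{\red}$ is instead determined by the \emph{smallest} active block. For example, at $q=[1:0:0:0:1:0]$ your index is $1$, yet $z_5\in I_{V_1}$, so $q\notin V_1$.

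\textbf{What is missing.} Only on the open stratum of $V_s$ does a local comparison with one component suffice. Everywhere else you must show that the local ideal is an intersection of several primary ideals, and that is exactly what remains unproved. The Gr\"obner/Hilbert-function fallback is only a suggestion, not an argument. The paper supplies this step by induction on $s$:
\begin{enumerate}
\item Remove the $m_s$ blocks of size $i_s$.
\item The inductive hypothesis, together with \Cref{prop:idealoneeigen} for the smaller Higgs bundle, gives $\bigcap_{v<s}I_{V_v}=(z_j\mid j\in\bar{\mathscr L})+\bar I$.
\item Then prove $I_\phi=\bigl(\bigcap_{v<s}I_{V_v}\bigr)\cap I_{V_s}$ by comparing the quadratic monomials and binomials on both sides.
\end{enumerate}

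\textbf{Two smaller slips.}
\begin{itemize}
\item In Step~1 the inclusion $I_{\mathscr A}\subseteq(z_j\mid j\in\mathscr L_v)+I_{\mathscr A_v}$ is false. A truncated column $(z_c,0)^t$ with $c\in\mathscr C_v$, paired with a column of $M_{\mathscr A_v}$, produces $z_cz_b$ with $b\in\mathscr B_v$. For instance, with $\phi=((0,3),(0,2))$ and $v=2$, the minor $z_2z_5-z_3z_4$ reduces to $z_2z_5\notin(z_3,\,z_1z_5-z_2z_4)$. This term does lie in the monomial summand, so the conclusion $I_{\mathscr A}\subseteq I_{V_v}$ still holds.
\item In Step~3, when $i_v=1$ the map $\mathsf{sv}_{\mathscr A_v}$ is not an embedding. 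In that case, however, $I_{V_v}$ is linear and the claim is trivial.
\end{itemize}
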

\begin{proof}
    Again, thanks to   \Cref{lemma:indepofdiag} we can assume $\lambda=0$.  Let us denote by $r=\sum_{v=1}^sm_vi_v$ the size of $\phi$. First, notice that none of the ideals involved in the right-hand side of \eqref{eq:IVv} is redundant because, by construction, there is no inclusion   $\mathscr{L}_v\subset \mathscr{L}_{v'}$ for $v\not=v'$.  Now we show by induction on $s\ge 1$ that
   \begin{equation}\label{eq:primaryDec}
         I_{{\phi}}=\bigcap_{v=1}^s I_{V_v},
   \end{equation}
    where $I_\phi$ is the ideal of (any) fiber $Z_{\phi}\subset\P^{r-1}$, is a primary decomposition of $I_\phi$.  
    The ideal $I_\phi$ is  generated by the same polynomials in \eqref{eq:equaoneblocktutta} seen as polynomials in $\C[z_1,\ldots,z_r]$ instead of $\C[x,z_1,\ldots,z_r]$.

In the case $s=1$, equation \eqref{eq:primaryDec} is true by \Cref{lemma:onejordanblock}. We move now to the inductive step.  Denote by $I_{s}$ the ideal $I_{s}=\bigcap_{v=1}^{s-1} I_{V_v}$. In particular we have $\bigcap_{v=1}^{s} I_{V_v} =I_s\cap  I_{V_{s}}$. On the other hand, we have by construction that the set of indices $\bar {\mathscr{L}}=\bigcap_{v=1}^{s-1} \mathscr{L}_{v}$
corresponds to the ideal $(z_{r-i_{s}m_s+1},\dots,z_{r-1},z_r)$.
 By the  inductive hypothesis this yields the decomposition
\[
I_{s}= \left(z_j \ |\ j\in \bar{\mathscr{L}} \right) +  \bar I,
\]
where $\bar I$ is the ideal of the Higgs Grassmannian of the rank 
$r- i_sm_s$ trivial bundle $E'$ with Higgs field
$\phi_s = ((\lambda,{i_v})^{m_v})_{v=1}^{s-1}$, and we are thinking of $\PP E'$ as a subbundle of $\PP E$.
By    \Cref{prop:idealoneeigen} we then get 
\begin{equation}\label{eq:Is}
    I_{s}= \left(z_j \ |\ j\in \bar{\mathscr{L}} \right) + \left(z_bz_c \ | \ b\in \bar{\mathscr{B}},\ c\in \bar{\mathscr{C}} \right) + I_{{\bar{\mathscr {A}}}},
\end{equation}
where $\bar{\mathscr{A}},\bar{\mathscr{B}}$ and $\bar{\mathscr{C}}$ are defined as in   \Cref{prop:idealoneeigen}. Denote the first two summands in \eqref{eq:Is} by $U,V$ and denote by $X,Y$ the first two summands of $I_{V_s}$ in \eqref{eq:IVv}{ for $v=s$}. 
We have
\[
I_\phi\subset I_s\cap I_{V_s}
\]
because

\[
\det \begin{pmatrix}
    z_i&z_j\\
    z_{i+1}&z_{j+1}
\end{pmatrix} \in (X+I_{{\mathscr{A}}_s})\cap (U+I_{\bar{\mathscr{A}}}), \quad z_bz_c  \in (X+Y)\cap (U+V)
\]
for $1\le i < j < r$,   $b\in\mathscr{B}$ and $c\in\mathscr{C}$.

For $1\le k\le i_1$ denote by $\mathscr{B}^{(k)}$ and $\mathscr{C}^{(k)}$ the   sets
\[
\mathscr{B}^{(k)} = \bigcup_{\{ a_1<\ldots <a_u \}\in \mathscr{A}} \{a_{1+k},\ldots,a_u \}  
\quad 
 \text{and}
\quad
\mathscr{C}^{(k)} = \bigcup_{\{ a_1<\ldots <a_u \}\in \mathscr{A} } \{a_{u},\ldots,a_{u-k+1} \}, 
\]
with the convention $\{a_{1+k},\ldots,a_u \}=\varnothing$ for $1+k>u$. Now we write $I_\phi$ using a redundant\footnote{The first summand contains the others and it equals the first summand in \eqref{eq:equaoneblocktutta}.} set of monomial generators,
\begin{equation}
    \label{eq:Iphiextend}
    I_\phi = \sum_{k=1}^{i_1}(z_{b }z_{c } \ |\, b\in\mathscr{B}^{(k)},\,c\in\mathscr{C}^{(k)}  ) + I_{\mathscr{A}}.
\end{equation}
This simplifies the rest of the proof, as, by construction, all the monomials of degree 2 in $I_{\phi}$ are displayed in \eqref{eq:Iphiextend}. Now, notice that  the   equalities 
    \[
    I_s = U+V+I_{\mathscr{A}},\quad \quad I_{V_s} = X+Y+I_{\mathscr{A}}
    \]
    hold. 
Let us denote by $\Gamma$  the first summand in \eqref{eq:Iphiextend}. We use these decompositions to list all the monomials in $I_s    \cap  I_{V_s}$, then the check that they belong to $\Gamma$ will be immediate, and this will conclude the proof. The monomial generators in the intersection $I_s\cap I_{V_s}$ are all of the  form $z_{b}z_c$ for:
    \begin{itemize}
        \item $b\in \bar{\mathscr{L}}$, $c\in \mathscr{L}_s$, and hence belong to $(z_{b }z_{c } \ |\, b\in\mathscr{B}^{(i_s)},\,c\in\mathscr{C}^{(i_s)}  ) \subset \Gamma $, or,
        \item $z_{b}z_c\in V $, and hence belong to $X$ because $V\subset X$ since $\bar{\mathscr{C}}\subset \mathscr{L}_s$, or,
        \item $b\in \mathscr{B}_s $, $c\in \mathscr{C}_s\cap \bar{\mathscr{L}}$, and hence belong to $(z_{b }z_{c } \ |\, b\in\mathscr{B}^{(i_s)},\,c\in\mathscr{C}^{(i_s)}  ) \subset \Gamma $, or,
        \item $b\in \mathscr{B}_s' $, $c\in \mathscr{C}_s$, where
        \[
        \mathscr{B}_s'=  \bigcup_{\{ a_1<\ldots <a_u \}\in \mathscr{A}} \{a_{1+u-i_s},\ldots,a_{i_s} \}   ,
        \] 
        and we adopt the same notation as above for the empty limit cases.
        \item All the other monomials are obtained by using the binomials in $I_{\mathscr{A}}$ and  are recovered via \eqref{eq:Iphiextend}.
    \end{itemize}
Clearly all these generators belong to $\Gamma$, and this concludes the proof.
\end{proof}
\begin{example}
    
    Put $s=2$, $i_1=4,i_2=2$, and consider $\phi = ((0,4),(0,2))$, i.e. the matrix 
    \[
    \phi =\begin{pmatrix}
        0 & 1 & 0 & 0 & 0 & 0 \\[-0.5em]
        0 & 0 & 1 & 0 & 0 & 0 \\[-0.5em]
        0 & 0 & 0 & 1 & 0 & 0 \\[-0.5em]
        0 & 0 & 0 & 0 & 0 & 0 \\[-0.5em]
        0 & 0 & 0 & 0 & 0 & 1 \\[-0.5em]
        0 & 0 & 0 & 0 & 0 & 0  
    \end{pmatrix}.
    \]
    Then, we have
 \begin{itemize}
     \item[]   $\mathscr{A}_{1}=\{\{1,2,3,4\}\}  $,\ 
     $\mathscr B_1=\{2,3,4\}$,\  $\mathscr C_1=\{4\}$,\ 
     $\mathscr L_1 = \{5,6\}$; 
     \item[]   $\mathscr{A}_{2}=\{\{5,6\} ,\{1,2\}\}$,\ 
     $\mathscr B_2=\{2,6\}$,\  $\mathscr C_2=\{2,6\}$,\ 
     $\mathscr L_2 = \{3,4\}$.  
 \end{itemize}
 This gives
 \[ 
 I_{V_1}= \left(z_5,z_6\right)+\left(z_4^2,z_2z_4,z_3z_4\right)+\left(\rk\begin{pmatrix}
     z_1 &z_2&z_3\\
     z_2&z_3&z_4
 \end{pmatrix}\le 1\right),
 \]
 and 
 \[
 I_{V_2}= \left(z_3,z_4\right)+\left(z_2^2,z_2z_6,z_6^2\right)+\left(z_1z_6-z_2z_5\right).
 \]
\end{example}

We now  consider the case with multiplicities, i.e.,   the $m_{i}$'s in \eqref{eq:notBlocks} are not necessarily all equal to 1.

\begin{example}
We presented a first example with multiplicities in   \Cref{ex:diagonal}. Now we see an example with a unique eigenvalue and bigger blocks. We consider the matrix in Jordan form
\[
\phi=((0,2)^2).
\]
    This means that  $s=1$, $i_1=2$, and $m_1=2$, and the  matrix $\phi$ has the form
    \[
    \phi =\begin{pmatrix}
        0 & 1 & 0 & 0   \\[-0.5em]
        0 & 0 & 0 & 0   \\[-0.5em]
        0 & 0 & 0 & 1   \\[-0.5em]
        0 & 0 & 0 & 0     
    \end{pmatrix}.
    \]
    We have   $\mathscr{A}_{1}=\{\{1,2\},\{3,4\}\}  $, and  
 
 \[
 I_{V_1}=0+ \left(z_2^2,z_2z_4, z_4^2\right) + \left( z_2z_3-z_1z_4 \right).
 \] 
\end{example}

The following result together with \Cref{prop:unautoval} concludes the classification of the Higgs bundles in dimension 1 when the Jordan type of the Higgs field is constant.

\begin{thm}\label{thm:maincurvecostante}
    Let $\fE=(E,\phi)$ be a Higgs bundle on a smooth  curve $B$, and let $U$ be an open subset of $B$ where the Jordan type of $\phi$ is constant, given by
     \[((\lambda_v,i_{vj})^{m_{vj}}\ |\ 1\le v \le s,\ 1\le j\le j_v),
    \]
     with $\lambda_v\not=\lambda_u$ for $v\not=u$. 
    Denote by $\fE_v=(E_v,\phi_v)$, with  $E_v\subset E_{|U}$ and $\phi_v=\phi_{|E_v}$,  the invariant subbundle relative to the eigenvalue $\lambda_v{ \in H^0(U,\cO_U)}$. Then, the decomposition of the Higgs Grassmannian   $\hgrass{1}{\fE_{|U}}$  in connected components is
    \begin{equation}\label{eq:statement}
    \hgrass {1}{\fE_{|U}}= \coprod_{v=1}^s \hgrass{1}{\fE_v}.        
    \end{equation} 
\end{thm}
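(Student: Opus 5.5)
Over $U$ the Higgs bundle splits as a direct sum of Higgs bundles $\fE_{|U}=\bigoplus_{v=1}^s\fE_v$, where $E_v$ is the generalized eigenbundle of $\lambda_v$. Since the Jordan type is constant on $U$, each $E_v$ has constant rank and is a subbundle; equivalently, after shrinking to trivializing opens, $\phi$ is conjugate over $\cO_U$ to the block form $\bigoplus_v\phi_v$. Each inclusion $\fE_v\hookrightarrow\fE_{|U}$ is a Higgs subbundle, so it induces closed immersions $\PP E_v\hookrightarrow \PP E_{|U}$ and $\hgrass1{\fE_v}\hookrightarrow\hgrass1{\fE_{|U}}$. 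The latter is seen either from the functorial description of \Cref{rem:universe} or directly from the equations \eqref{e:piphi}: setting the coordinates outside the $v$-th block to zero recovers exactly the equations for $\phi_v$. The $\PP E_v$ are pairwise disjoint, since the $E_v$ have trivial pairwise intersection fiberwise, so we get a closed immersion $\coprod_v\hgrass1{\fE_v}\hookrightarrow \hgrass1{\fE_{|U}}$. The whole task is to show that this is an isomorphism of schemes. The statement is local on $U$, so I would work over an affine open where $E_{|U}$ is trivial and $\phi$ is in block Jordan form.

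\textbf{Set-theoretic step.} A point of $\hgrass1{\fE_{|U}}$ over $b\in U$ is a line $\ell\subset E_b$ with $\phi_b(\ell)\subset\ell$, i.e.\ an eigenline of $\phi_b$. Since $\lambda_u(b)\neq\lambda_v(b)$ for $u\ne v$, such a line lies in exactly one generalized eigenspace $(E_v)_b$. Hence the supports agree, and the subsets $\PP E_v\cap\hgrass1{\fE_{|U}}$ are pairwise disjoint and closed.

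\textbf{Scheme-theoretic step (main obstacle).} I would argue through the functor of points, which avoids handling nilpotent structure by hand. Let $T$ be a $U$-scheme and $L\subset E_T$ a rank-one $\phi$-invariant subbundle. Write $\phi_T=\bigoplus_v\phi_{v,T}$. Locally on $T$, choose a generator $e=\sum_v e_v$ of $L$. The condition $\phi e=\mu\otimes e$ for some local form $\mu$ decomposes componentwise as $(\phi_{v}-\mu)e_v=0$ for each $v$.

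Now use the disjointness established above: $T$ is covered by the open sets $T_v=\{e_v\ \text{generates locally a direct summand, i.e.\ }e_v(t)\neq 0\}$. These are pairwise disjoint as sets, because at each point $e$ lies in exactly one eigenspace. On $T_v$, with $u\neq v$, I want to show $e_u=0$ scheme-theoretically. On $T_v$ the element $e_v$ is part of a frame, and $(\phi_v-\mu)e_v=0$ forces $\mu-\lambda_v$ to be nilpotent on $T_v$, since $(\phi_v-\lambda_v)$ is nilpotent and acts on $e_v$ as multiplication by $\mu-\lambda_v$. Hence $\phi_u-\mu=(\phi_u-\lambda_u)+(\lambda_u-\lambda_v)+(\lambda_v-\mu)$ is a nilpotent term plus the unit $\lambda_u-\lambda_v$, which is invertible on $U$ since the eigenvalues are distinct everywhere there. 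Working with a coordinate trivialization of $\Omega^1_U$ (here $\dim U=1$, so $\mu$ is a function), $\phi_u-\mu$ is therefore invertible. It follows that $e_u=0$ on $T_v$.

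Thus on $T_v$ the line $L$ lies in $E_{v,T}$, so $T=\coprod T_v$ and the family factors through $\coprod_v\hgrass1{\fE_v}$. This shows the closed immersion above is surjective on $T$-points for every $T$, hence an isomorphism, which gives \eqref{eq:statement}.

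The components $\hgrass1{\fE_v}$ are connected: by \Cref{prop:unautoval}, each fiber over $U$ has a reduction that is a union of nested projective spaces $\P^{d_1}\subset\cdots\subset\P^{d_s}$, hence connected, and the family is a product with the base locally. This yields the decomposition into connected components. The delicate point is the nilpotence argument for $\mu-\lambda_v$, which is where the constancy of the Jordan type and the everywhere-distinctness of the $\lambda_v$ on $U$ are used.
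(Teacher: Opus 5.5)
Your proof is correct and follows the same route as the paper: you build the comparison morphism $\coprod_v\hgrass1{\fE_v}\to\hgrass1{\fE_{|U}}$ from the inclusions $\fE_v\hookrightarrow\fE_{|U}$, and then use the functor of points to show that every family of $\phi$-invariant line subbundles factors through it. Your nilpotence argument supplies the step the paper only asserts: $\mu-\lambda_v$ is nilpotent on $T_v$, so $\phi_u-\mu$ is invertible there for $u\neq v$, which forces $e_u=0$ and $T=\coprod_v T_v$. Your connectedness remark covers a point the paper's proof leaves out, though it tacitly needs $U$ to be connected.
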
 
\begin{proof} 
Let us denote by $Y$ the right-hand side of \eqref{eq:statement}. By the  universal property of $\hgrass {1}{\fE_{|U}}$, see   \Cref{rem:universe}, we have a canonical morphism of schemes
 \[
\begin{tikzcd}
     Y \arrow[rr, "\Pi"]\arrow[dr]&& \hgrass{1}{\fE_{|U}}\arrow[dl] \\
     & B
\end{tikzcd}
\]
induced by the inclusions of Higgs bundles $\fE_v\hookrightarrow \fE_{|U}$ for $v=1,\ldots,s$. Note that the scheme $Y$ represents the functor
\[
    \begin{tikzcd}[row sep =tiny]
        \text{Sch}^{\mathrm{op}}  \arrow[r,"G"]&   \text{Sets}\\
        T\arrow[r,mapsto] & \{ \text{rank $1$ Higgs subbundles of $p_2^\ast \fE_v$, for some $v=1,\ldots,s$}\},
    \end{tikzcd}
\]
where $p_2\colon T\times X \to X$ is the projection. 
In particular, the morphism of schemes $\Pi$ induces a natural transformation $\alpha: G \Rightarrow F_{1,\fE_{|U}}$, see \Cref{eq:functor} for the definition of $F_{1,\fE_{|U}}$. Explicitly, for every scheme $T$  the morphism $\alpha_T: G(T)\to F_{1,\fE_{|U}}(T) $ regards a rank 1 subbundle $\mathfrak L\subset p_2^*\fE_v$, for some $v=1,\ldots,s$, as a rank 1 subbundle $\mathfrak L\subset p_2^*\fE_{|U}$.

The conclusion follows from the fact that for every scheme $T$, and for every $p_2^*\phi$-invariant rank 1 subbundle $\mathfrak L\subset p_2^\ast\fE_{|U}$, there is a morphism $\varpi:T \to Y$ such that $\mathfrak L = \varpi^\ast \widetilde\gS$, where $\widetilde\gS$  is the universal rank 1 subbundle on $Y$, i.e.~the sheaf agreeing on each component of $Y$ with the sheaf $\gS_{1,v}$ introduced in \eqref{eq:uniSeq}. This  means that $Y$ represents the functor $F_{1,\fE}$, and hence that $\Pi $ is an isomorphism.  
\end{proof}

 The following Corollary is a kind of qualitative summary of what we have so far understood about the structure of the Higgs Grassmannian.

\begin{corol} \label{cor:cor}    With the same notation and hypotheses of   \Cref{thm:maincurvecostante}, the morphism $\hgrass{1}{\fE}\to B$ is finite if and only if the eigenvalues corresponding to the Jordan decomposition of $\phi$ are all distinct. Moreover, the Higgs Grassmannian is reduced if and only if $\phi$ is diagonalizable.
\end{corol}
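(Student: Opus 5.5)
The plan is to read both assertions off the decomposition of \Cref{thm:maincurvecostante}, applied to the fibers over $U$, together with the explicit descriptions in \Cref{prop:idealoneeigen} and \Cref{prop:unautoval}. I read ``the eigenvalues are all distinct'' as saying that each eigenvalue $\lambda_v$ carries exactly one Jordan block, i.e.\ $j_v=1$ and $m_{v1}=1$ for every $v$. Working locally on $U$, I may assume we are in the setting of \eqref{eq:BePE}. By \eqref{eq:statement}, $\hgrass 1{\fE_{|U}}$ is the disjoint union of the $\hgrass 1{\fE_v}$. By \Cref{lemma:indepofdiag}, each $\fE_v$ may be replaced by the nilpotent Higgs field $\phi_v-\lambda_v\,\id$, whose Jordan type is $((0,i_{vj})^{m_{vj}})_{j=1}^{j_v}$. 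Both properties in the statement (finiteness over $B$ and reducedness) are local on the target and compatible with disjoint unions, so it suffices to check them on each $\hgrass 1{\fE_v}$.

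\emph{Finiteness.} The map $\rho_1$ is projective, since $\hgrass 1{\fE}$ is closed in $\PP E$. Hence it is finite if and only if it is quasi-finite, that is, if and only if all fibers are zero-dimensional. By \Cref{prop:idealoneeigen}, the fiber of $\hgrass 1{\fE_v}$ has reduction $\PP^{-1+\sum_j m_{vj}}$. This is zero-dimensional exactly when $\sum_j m_{vj}=1$, i.e.\ when $\lambda_v$ has a single Jordan block. Taking all $v$ together gives the first equivalence. In that case \Cref{lemma:onejordanblock} even identifies each component as a degree-$i_{v1}$ curvilinear fat point times $B$.

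\emph{Reducedness.} From the proofs of \Cref{lemma:onejordanblock} and \Cref{prop:idealoneeigen}, after trivializing, the ideal has coefficients independent of $x$. Hence $\hgrass 1{\fE_v}\cong Z_v\times B$, where $Z_v\subset\PP^{r_v-1}$ is the fiber. Since $B$ is a smooth curve over $\C$, the product $Z_v\times B$ is reduced if and only if $Z_v$ is reduced. If $\phi$ is diagonalizable, all $i_{vj}=1$, and \Cref{ex:diagonal} shows that the component is $B\times\PP\Lambda_v$, which is reduced. Conversely, suppose some $i_{vj}\ge 2$, and let $i_{v1}$ be the largest such size. Then I will exhibit a nilpotent element. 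With $\mathscr B$ as in \Cref{prop:idealoneeigen}, $\sqrt{I}=(z_a\mid a\in\mathscr B)$, and $\mathscr B\neq\emptyset$. So any $z_b$ with $b\in\mathscr B$ is nilpotent modulo $I$. It remains to check that $z_b\notin I$. This holds because $I$ is generated in degree $2$, so it contains no linear forms. Alternatively, \Cref{prop:unautoval} already shows that the top component $V_1$ is a bundle of curvilinear fat points of degree $i_{v1}\ge2$, hence nonreduced.

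The only step needing care is this converse direction for reducedness: one must make sure the nonreducedness is genuinely present and not an embedded artifact that disappears on some component. The degree-$2$ generation of the ideal, which forces $z_b\notin I$, settles this quickly. The finiteness part is essentially bookkeeping once \Cref{thm:maincurvecostante} and \Cref{prop:idealoneeigen} are in hand.
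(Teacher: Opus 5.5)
Your approach is the paper's: the corollary is read off from the splitting in \Cref{thm:maincurvecostante} together with the one-eigenvalue analysis of \Cref{prop:idealoneeigen} and \Cref{prop:unautoval}. The finiteness part is fine: proper plus quasi-finite, with reduced fiber $\PP(\ker(\phi_v-\lambda_v\,\id))\cong\PP^{-1+\sum_j m_{vj}}$. The diagonalizable direction of the reducedness claim is also fine.

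The argument you single out as settling the converse of reducedness does not work, however. Knowing that $z_b$ is a nonzero nilpotent of the graded ring $\C[z_1,\ldots,z_r]/I$ (nonzero because $I$ is generated by quadrics) does not show that the projective scheme cut out by $I$ is nonreduced. Such a nilpotent may be killed by a power of the irrelevant ideal, i.e.\ lie in the saturation of $I$, and then it is invisible on every affine chart. For example, $I=(z_1^2,z_1z_2)\subset\C[z_1,z_2]$ is generated in degree $2$, and $z_1\notin I$ is nilpotent modulo $I$. But $I=(z_1)\cap(z_1^2,z_2)$, so $V(I)\subset\PP^1$ is the reduced point $[0:1]$. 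This irrelevant primary component is the real ``artifact'' to guard against. An embedded component, by contrast, always forces nonreducedness.

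Your fallback is what actually proves the claim, and it is exactly the paper's route. By \Cref{prop:unautoval}, if $s\ge2$ the fiber has the embedded components $V_1,\dots,V_{s-1}$; if $s=1$ it is a bundle of curvilinear fat points of length $i_1\ge2$. In both cases it is nonreduced. Note that $V_1$, attached to the largest block, has the smallest dimension $m_1-1$, so it is not the top-dimensional component.

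If you want a self-contained repair of your own argument, work in the chart $z_a=1$, where $a=\min A$ for a block $A$ of size at least $2$. Linearizing the generators \eqref{e:piphi} at $[e_a]$ yields only the forms $z_c$ with $c\in\mathscr B\setminus\{a+1\}$. Hence the Zariski tangent space there has dimension $\sum_j m_{vj}$. This is one more than the dimension of the smooth reduced fiber $\PP(\ker(\phi_v-\lambda_v\,\id))$ at that point, so the fiber is nonreduced there.
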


\begin{proof}
    Direct consequence of   \Cref{thm:maincurvecostante} and \Cref{prop:unautoval}.
\end{proof}

\begin{remark}\label{rmk:stratification}
When the base curve is irreducible, by semicontinuity the Jordan type of the Higgs field is constant on an open dense subset, but on finitely many points the Jordan type may be finer. However, by analyzing the Higgs Grassmannian on each stratum where the Jordan type is constant, one can reconstruct its global structure. In the following we make a few examples.
Actually, in the rank two case  a more intrinsic analysis is feasible, as we shall see in       \Cref{section:discriminant}.
\end{remark}

\begin{example} \label{ex:J1}
   Let $B=\BA^1$ with an affine coordinate $x$, and let $E$ be the trivial rank 2 bundle with the Higgs field $$\begin{pmatrix}x&0\\1&x
       \end{pmatrix}.$$ 
   Here the Jordan type is constant, with a single Jordan block. According to \Cref{lemma:onejordanblock} the Higgs Grassmannian is a double line.
\end{example}

\begin{example} \label{ex:J2}
   The same bundle with the Higgs field $$\begin{pmatrix}0&x\\1&0
       \end{pmatrix}.$$ 
   Here $\phi$ has two Jordan blocks with different eingenvalues for $x\ne 0$; by \Cref{thm:maincurvecostante} the Higgs Grassmannian is reducible on that open subset,  with two separate sheets. At $x=0$ one has just one Jordan block, so that the fiber is a double point. Globally, the Higgs Grassmannian is a reduced irreducible double covering of the base.
\end{example}
\begin{example} \label{ex:J3}
   Again the same bundle but the Higgs field $$\begin{pmatrix}x&0\\0&-x
       \end{pmatrix}.$$ 
   Here the Jordan type is constant, with two blocks. The Higgs Grassmannian is reduced with three components: two distinct lines, each of degree one over the base, and a ``vertical'' component at $x=0$ isomorphic to $\PP^1$.
\end{example}
Note that in Examples \ref{ex:J2} and \ref{ex:J3}, differently from Example \ref{ex:J1},  there is no neighborhood of the origin where the Jordan type is constant.

\section{Higgs Grassmannians in higher dimensions}
 \label{sec:dimnrkrsingleJb}
  Recall that, given a set of commuting matrices $\{A_1,\ldots,A_s\}$, they can be upper triangularised simultaneously via a linear basis change. Moreover, this basis change  can be chosen so that one of the $A_i$'s is in Jordan form.

\begin{lemma}\label{lemma:tech}
Let $R$ be a ring, and $A, T \in \Mat_{r \times r}(R)$     commuting matrices, where $A$ is a single Jordan block with eingenvalue $\lambda$.  
Then $T$ is a  triangular Toeplitz  matrix, and there exists a polynomial
\[
p(t)=\mu_0+\mu_1t+\cdots+\mu_{r-1}t^{\,r-1}\in R[t]
\]
such that
$T = p(A)$. If $p'(\lambda)\ne 0$, then $T$ is similar to a single Jordan block. 
\end{lemma}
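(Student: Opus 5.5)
Write $A=\lambda I+N$, where $N$ is the nilpotent shift matrix with $N e_{j}=e_{j-1}$ (ones on the superdiagonal, as in the examples above), so that $Ne_1=0$. Since $\lambda I$ is central, $T$ commutes with $A$ if and only if it commutes with $N$. The plan is to solve the linear system $TN=NT$ entry by entry over the arbitrary ring $R$, using no field-theoretic arguments, and then to read off $p$.

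\emph{Step 1: Toeplitz structure.} Comparing the $(i,j)$ entries of $TN$ and $NT$ gives $(TN)_{ij}=T_{i,j-1}$, with the convention $T_{i,0}=0$, and $(NT)_{ij}=T_{i+1,j}$, with $T_{r+1,j}=0$. Hence $T_{i,j-1}=T_{i+1,j}$ for all $i,j$, so $T$ is constant along diagonals. The boundary cases give more. Taking $j=1$ yields $T_{i+1,1}=0$ for $i\ge1$, i.e.\ the first column of $T$ is $(T_{11},0,\dots,0)^t$. Propagating this along the diagonals shows that every strictly lower entry vanishes. Therefore $T$ is upper triangular Toeplitz: $T_{ij}=\mu_{j-i}$ for $j\ge i$ and $T_{ij}=0$ otherwise.

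\emph{Step 2: the polynomial.} Since $N^k$ has ones exactly on the $k$-th superdiagonal, Step 1 gives $T=\sum_{k=0}^{r-1}\mu_kN^k=q(N)$ with $q(t)=\sum\mu_kt^k$. To express this in $A$, set $p(t)=q(t-\lambda)$. Expanding by the binomial theorem shows $p(t)\in R[t]$ has degree at most $r-1$, and $p(A)=q(A-\lambda I)=q(N)=T$. Relabel the coefficients of $p$ as $\mu_0,\dots,\mu_{r-1}$, as in the statement.

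\emph{Step 3: the Jordan claim.} I expect this to be the main obstacle, because "$\ne0$" must be interpreted over a general ring. I will take $R$ to be a field, or interpret $p'(\lambda)$ as a unit of $R$, which is the natural reading over $\cO_X$ locally. We have $T-p(\lambda)I=q(N)-q(0)I=\mu_1N+\mu_2N^2+\cdots=N\,u(N)$, where $\mu_1=q'(0)=p'(\lambda)$ and $u(N)=\mu_1+\mu_2N+\cdots$. When $\mu_1$ is a unit, $u(N)$ is invertible, since it is a unit plus a nilpotent. Then $M:=T-p(\lambda)I$ is nilpotent with $M^{r-1}=\mu_1^{r-1}N^{r-1}\ne0$, and $e_r,Me_r,\dots,M^{r-1}e_r$ is a basis: the matrix of these vectors in the standard basis is triangular with unit diagonal entries (up to ordering). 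In this basis $M$ is the standard shift, so $T$ is similar to the single Jordan block with eigenvalue $p(\lambda)$.

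The only real care needed is the boundary bookkeeping in Step 1 and the unit hypothesis in Step 3.
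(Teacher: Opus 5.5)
Your proof is correct and follows the paper's argument. Solving $TN=NT$ entrywise to get the upper triangular Toeplitz form, writing $T=\sum_k b_kN^k$ and re-expanding in $A-\lambda I$, and factoring $T-b_0I=N\,U$ with $U=b_1I+b_2N+\cdots+b_{r-1}N^{r-2}$ invertible are exactly the paper's steps. The only difference is the last move: the paper concludes from $\rk(NU)=\rk N=r-1$ that $\dim\ker(T-b_0I)=1$, which tacitly assumes a field, whereas your cyclic basis $e_r,Me_r,\dots,M^{r-1}e_r$ gives an explicit similarity over any commutative ring once $p'(\lambda)$ is a unit, which is the reading actually needed when the lemma is applied over $\cO_X$ in the subsequent lemma.
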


\begin{proof}
The first part of the proof can be found in 
\cite[Chapter VIII, \S 2.1]{GANTMACHER}. However we present here a full proof to introduce the notation needed in the second part. Set $A =\lambda I + N$,
where $N$ is a nilpotent Jordan block.
$T$ commutes with $N$, and a direct calculation shows that it can be written as
\[
T=\begin{pmatrix}
b_0 & b_1 & b_2 & \cdots & b_{r-1}\\
0   & b_0 & b_1 & \cdots & b_{r-2}\\
\vdots & \ddots& \ddots & \ddots & \vdots\\
0 & \cdots &0 & b_0 & b_1\\
0 &\cdots &0 & 0   & b_0
\end{pmatrix},
\]
i.e., it is a triangular Toeplitz matrix. As a consequence
\[
T = b_0 I + b_1N + b_2N^2+ \cdots + b_{r-1}N^{r-1}=b_0I + b_1(A-\lambda I) + \cdots + b_{r-1}(A-\lambda I)^{r-1}
\] 
which is a polynomial in $A$ of degree at most $r-1$. Expanding in $A$, we may write $T=p(A)$, and then the coefficients $b_i$ are $$b_i=\frac{p^{(i)}(\lambda)}{i!},$$ 
for $i=0,\ldots,r-1$. If $b_1=p'(\lambda)\ne 0$, since the characteristic polynomial of $T$ is $(x-b_0)^r$, having a single Jordan block   means that $\ker(B-b_0I)$ has dimension $1$. Writing $$T-b_0I=N(b_1I+b_2N+\cdots + b_{r-2}N^{r-3})$$  the matrix $U=b_1I+b_2N+\cdots + b_{r-2}N^{r-3}$ is invertible. Therefore, $\rk(NU)=\rk (N)=r-1$ which implies that $\dim\ker(T-b_0I) = 1$. 
\end{proof}

 \begin{notation}\label{nota:eqmatrici}
     Let $R$ be a ring and let $M$ be an $r\times r $ matrix with coefficients in $R$. We shall denote by $I_{M}$ the ideal
     \[
     I_{M}=\left(\sum_{k=1}^r  
z_k(M_{jk}z_i-M_{ik}z_j)=0\,,|\ 1\le i<j\le r \right)\subset R[z_1,\ldots,z_r].
     \]
     Similarly, we denote by $V_M=\Spec(R[z_1,\ldots,z_r]  / I_M)$ the  scheme defined by $I_M$. 
 \end{notation}
  
 \begin{lemma}\label{prop:containementcomuutingblock} 
    Assume that $R$ is the ring of regular functions over an open set of a variety; let $A,T\in \Mat_{r\times r}(R)$ satisfy the hypotheses of   \Cref{lemma:tech}.  Then, one has  
     \[
     I_{A}\supseteq I_{T},
     \]
    or, equivalently, $
              V_ A  \subseteq V_T$. 
Moreover, the equality holds if and only if $ p'(\lambda)\not=0$, where $p\in R[t]$ comes from \Cref{lemma:tech}. 
 \end{lemma}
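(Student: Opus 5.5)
The plan is to interpret both ideals through the functor of points, as in \Cref{rem:universe}, and then use $T=p(A)$ from \Cref{lemma:tech}. By \eqref{e:piphi}, the ideal $I_M$ of \Cref{nota:eqmatrici} is generated by the $2\times 2$ minors of the $r\times 2$ matrix $(z\,|\,Mz)$, where $z=(z_1,\dots,z_r)^t$. So $V_M$ parametrizes lines $L$, over an $R$-algebra $S$, with $ML\subseteq L$. The containment $V_A\subseteq V_T$ should then follow from the fact that any $A$-invariant line is $A^k$-invariant for every $k$, and therefore $p(A)$-invariant.

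To make this rigorous, I would work on the standard affine charts $D_+(z_i)$ and not with the raw homogeneous ideals. The two ideals are homogeneous, so the claimed containment is really a containment of the projective subschemes, equivalently of saturations; I would note this explicitly.

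On $D_+(z_i)$, set $c=(Az)_i/z_i$. The minors involving the $i$-th row show that modulo $I_A$ one has $(Az)_j z_i = (Az)_i z_j$ for all $j$. Hence $Az=cz$ in the coordinate ring of $V_A\cap D_+(z_i)$. Iterating gives $A^kz=c^kz$, and so $Tz=p(A)z=p(c)z$. All $2\times2$ minors of $(z\,|\,Tz)$ then vanish, so the generators of $I_T$ lie in $I_A$ on every chart, and gluing the charts gives $V_A\subseteq V_T$.

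For the equality, first suppose $p'(\lambda)=b_1$ is a unit. In the proof of \Cref{lemma:tech} we wrote $T-b_0I=NU$ with $U$ invertible. I would then show that $N'=NU$ is conjugate over $R$ to the standard nilpotent Jordan block. The basis $e_r, N'e_r,\dots,N'^{\,r-1}e_r$ works, because $N'$ is upper triangular with superdiagonal entries $b_1$, so the change-of-basis matrix is triangular with unit diagonal. Thus $T$ is, up to an $R$-linear change of frame, a single Jordan block with eigenvalue $b_0$. Since $A$ commutes with $T$, \Cref{lemma:tech} applied with the roles of $A$ and $T$ swapped gives $A=q(T)$, and the first part yields $V_T\subseteq V_A$.

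Conversely, suppose $b_1$ vanishes at some point $x$. I would pass to the fiber over $x$, which is legitimate because the equations commute with base change. There $T_x-b_0(x)I=N^2(\cdots)$ has kernel of dimension at least $2$, so $V_{T_x}$ contains the linear space $\PP(\ker(T_x-b_0(x)I))$ of positive dimension. This can also be read off from \Cref{prop:idealoneeigen} via the Jordan type of $T_x$. By \Cref{lemma:onejordanblock}, however, $V_{A_x}$ is a fat point of degree $r$, so the two schemes differ.

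The main obstacle is making the hypothesis "$p'(\lambda)\neq0$" precise over the ring $R$. Pointwise nonvanishing on the open set, meaning $b_1$ is a unit, is the reading that makes both directions work, and I would adopt it. A second delicate point is the care needed with saturation when passing from schemes back to ideals.
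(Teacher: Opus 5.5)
Your proof is correct, and it takes a different route from the paper.

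\textbf{How the paper argues.} It proves $I_T\subseteq I_A$ by direct comparison: it writes $T$ as a combination of powers of $N$, expands each generator of $I_T$, and checks it against the explicit generators of $I_A$ from \Cref{lemma:onejordanblock}. For equality, it notes that when $p'(\lambda)\neq0$ the matrix $T$ is itself similar to a single Jordan block, so $V_T\cong V_A$, and combines this with the inclusion. The ``only if'' direction is not argued.

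\textbf{What your route buys.} Your argument ($Az=cz$ on each chart, hence $Tz=p(c)z$) uses only $T\in R[A]$. So it gives $V_A\subseteq V_{p(A)}$ for an arbitrary matrix $A$. That generality lets you obtain the reverse inclusion symmetrically from $A=q(T)$, without passing through an abstract isomorphism. Your fiber computation then supplies the converse the paper leaves out: $\PP(\ker(T_x-b_0(x)I))$ has positive dimension, while $V_{A_x}$ is a fat point.

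You are also right that the hypothesis must mean $b_1=p'(\lambda)\in R^\times$. The paper uses this silently, since its proof of \Cref{lemma:tech} needs $U$ to be invertible. Read literally as ``$p'(\lambda)\neq0$ in $R$'', the ``if'' direction fails. Over $\C[x]$, take $A=N$ and $T=xN$: then $p'(\lambda)=x$, but $I_T=xI_A\subsetneq I_A$.

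\textbf{Two small points.}

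First, in \Cref{nota:eqmatrici}, $V_M$ is $\Spec(R[z_1,\dots,z_r]/I_M)$, the affine cone. So the lemma genuinely asserts $I_T\subseteq I_A$, and reinterpreting it as a containment of saturations proves less. Your idea works without localizing:
\begin{itemize}
\item Modulo $I_A$ one has $z_n(Az)_m\equiv(Az)_nz_m$ for all $m,n$.
\item Expanding gives $(A^az)_j(A^bz)_l\equiv(A^{a+1}z)_j(A^{b-1}z)_l$.
\item Chaining yields $z_j(A^kz)_l\equiv(A^kz)_jz_l$ for every $k$.
\item The generators of $I_M$ are $R$-linear in $M$ and vanish for scalar $M$.
\end{itemize}
Hence $I_{p(A)}\subseteq I_A$ holds as ideals, and likewise $I_{q(T)}\subseteq I_T$.

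Second, your ``only if'' argument uses $\dim\ker N^2\geq2$, i.e.\ $r\geq2$. This restriction is needed anyway: for $r=1$ both ideals are zero while $p'=0$.
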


 \begin{proof} According to   \Cref{lemma:onejordanblock}, the ideal $I_A$ is generated by
\begin{equation*}
    \begin{cases} 
        z_i z_{j+1} - z_j z_{i+1} & \text{if } 1\le i< j < r ,\\
         z_r z_{i+1} & \text{if } 1\le i< j = r.
    \end{cases}
    \end{equation*}
 
 Thanks to   \Cref{lemma:tech} we may assume that $T$ is a linear combination of powers of $A$. As a consequence, the generators of $I_{ T}$ obtained from \eqref{eq:mingensIHGR} rewrite as:

 \begin{equation*}
   \sum_{k=1}^{r-j} \mu_k (z_i z_{j+k} - z_j z_{i+k}) - \sum_{k=r-j+1}^{r-i} \mu_k z_j z_{i+k}, \quad 1\le i < j \le r
\end{equation*} 

 By comparing the two ideals one sees that $ I_{T} \subseteq I_{A} $.
  
 To conclude the proof we observe that if   $p'(\lambda)\not=0$ then  by \Cref{lemma:tech} $\psi$ has the same block decomposition of $A$. Therefore, there is an isomorphism $ V_A\cong V_T$. (Alternatively, one can check directly that when $p'(\lambda)\ne 0$ the two ideals   coincide.)  The equality follows then from the inclusion $V_A \subseteq V_T$ just proven. 
 \end{proof}

 \begin{example}
 For $r=3$ one has:
  \[ I_{A}=(z_1z_3-z_2^2, z_2z_3, z_3^2), \qquad I_{ T}=\bigl(\mu_1(z_1z_3-z_2^2), \mu_1z_2z_3+\mu_2z_3^2, \mu_1 z_3^2\bigr),
     \] which shows the inclusion. Moreover, when $\mu_1 \ne 0$ the two ideals agree. 
     \end{example}
     
 We consider preliminarily  the case where one of the matrices $\phi^{(1)},\ldots,\phi^{(n)}$ representing the Higgs field $\phi$ consists of single Jordan block, i.e., we prove a higher dimensional analog of \Cref{lemma:onejordanblock}.

 \begin{prop} \label{prop:n}
   Let $X$ be a smooth connected variety of dimension $n$, and $\fE=(E,\phi)$ a rank $r$ Higgs  bundle on $X$.  
   Write $\phi=(\phi^{(1)},\ldots,\phi^{(n)})$ in a local trivialization of $E$ over an open set $U \subset X$. Assume further that $\phi^{(1)}=((\lambda,r)^1)$ is a single Jordan block of rank $r$.  
   Then, the ideal of the Higgs Grassmannian over $U$ is
{ \begin{equation*} 
         I_{\Hgrass_1(\fE)} = \left( \rk\begin{pmatrix}
         z_1&\cdots & z_{r-1}\\z_2&\cdots &z_r
     \end{pmatrix}\le 1, \; z_iz_r \ |\ i=2,\ldots, r  \right)\subset \cO_{U}[z_1,\ldots,z_r].
     \end{equation*}  }  
    As a consequence each fiber of $\rho_1 \colon \hgrass{1}{\fE}\to X$ over $U$ is a curvilinear zero-dimensional scheme of degree $r$, topologically consisting of a single point.
 \end{prop}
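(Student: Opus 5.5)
The ideal of $\Hgrass_1(\fE)$ over $U$ is the sum of the ideals coming from the individual components of the Higgs field. In the notation of \Cref{nota:eqmatrici}, the equations \eqref{e:piphi} give
\[
I_{\Hgrass_1(\fE)} = \sum_{h=1}^n I_{\phi^{(h)}} \subset \cO_U[z_1,\ldots,z_r],
\]
since each $h$ contributes its own family of quadrics. The plan is to show that $I_{\phi^{(1)}}$ already contains every other summand. Then the ideal equals $I_{\phi^{(1)}}$, and \Cref{lemma:onejordanblock} identifies that ideal explicitly.

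\textbf{Step 1: compute $I_{\phi^{(1)}}$.} By \Cref{lemma:indepofdiag}, applied with $\eta=\lambda\,dx_1$ in local coordinates, we may replace $\phi^{(1)}$ by its nilpotent part. The computation in the proof of \Cref{lemma:onejordanblock} is purely formal over the coefficient ring; it only compares the generators of types (A), (B), (C) with those of types (a), (b), (c). It therefore works verbatim over $\cO_U$ and shows that $I_{\phi^{(1)}}$ is the ideal displayed in the statement.

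\textbf{Step 2: the other components.} The $\phi^{(h)}$ commute with $\phi^{(1)}$ because $\phi\wedge\phi=0$. By \Cref{lemma:tech}, each $\phi^{(h)}$ is upper triangular Toeplitz, so $\phi^{(h)}=p_h(\phi^{(1)})$ for some $p_h\in\cO_U[t]$. \Cref{prop:containementcomuutingblock}, applied with $A=\phi^{(1)}$ and $T=\phi^{(h)}$, then gives $I_{\phi^{(h)}}\subseteq I_{\phi^{(1)}}$ for every $h$. Hence $I_{\Hgrass_1(\fE)}=I_{\phi^{(1)}}$, which proves the first assertion.

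The main point to watch is that \Cref{lemma:tech} and \Cref{prop:containementcomuutingblock} are used over the ring $\cO_U$ rather than over a field. The commutation argument in \Cref{lemma:tech} only equates matrix entries, so it holds over any ring. The inclusion of ideals in \Cref{prop:containementcomuutingblock} is likewise an identity of polynomials: each generator of $I_T$ is an $\cO_U$-combination of the binomials and monomials generating $I_A$. No condition $p'(\lambda)\neq0$ is needed, since only the inclusion is used, not the equality.

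\textbf{Step 3: the fibres.} The generators have constant coefficients, so the fibre of $\rho_1$ over every point of $U$ is the scheme $Z\subset\PP^{r-1}$ studied in \Cref{lemma:onejordanblock}. Its proof shows that $Z$ is curvilinear, zero-dimensional, supported at $[1:0:\cdots:0]$, and of degree $r$. In particular, over $U$ one has $\Hgrass_1(\fE)\cong Z\times U$.
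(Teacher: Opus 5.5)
Your proposal is correct and is essentially the paper's proof: write the ideal as $\sum_h I_{\phi^{(h)}}$, use \Cref{prop:containementcomuutingblock} to get $I_{\phi^{(h)}}\subseteq I_{\phi^{(1)}}$, and then apply \Cref{lemma:onejordanblock} to identify $I_{\phi^{(1)}}$ and the fibres. Your extra remarks are accurate and only make explicit what the paper leaves implicit: only the inclusion is used, so no condition on $p'(\lambda)$ is needed, and the entrywise commutation and ideal-membership arguments are polynomial identities valid over $\cO_U$.
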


 \begin{proof} 
     By   \Cref{prop:containementcomuutingblock} we have
     inclusions $I_{\phi^{(1)}}\supset I_{\phi^{(j)}}$ for all $j=2,\ldots,n$. As a consequence, we have
     \[
     I_{\Hgrass_1(\fE)} = \sum_{j=1}^n I_{\phi^{(j)}}=I_{\phi^{(1)}}.
     \]
     Now it suffices   to apply   \Cref{lemma:onejordanblock} to conclude the proof. 
 \end{proof}

 \begin{lemma}\label{lemma:tech2}
     Let $\phi$ be a matrix in Jordan form such that
     \begin{equation}\label{eq:phi}
 \phi =((\lambda_v,{i_v})^1)_{v=1}^s
 \end{equation}
 with $\lambda_v\not=\lambda_{v'}$ for $v\not=v'$. In particular, there is no repetition of blocks. Denote by $\phi_v= (\lambda_v,{i_v})^1$, for $v=1,\ldots,s$, the $v$-th block of $\phi$. Let $\psi$ be a matrix commuting with $\phi$. Then, $\psi$ is block diagonal (not necessarily in Jordan form) with $s$ blocks. Moreover, there are complex numbers $\mu_{j}^{(v)}\in \C$, for $1\le v\le s$ and $1\le j\le i_v$, such that the $v$-th block $\psi_v$ of $\psi$ can be written as
 \[
 \psi_v=\sum_{j=1}^{i_v}\mu_j^{(v)}\phi_v^j
 \] 
 ($\phi_v^j$ means the $j$-th power of $\phi_v$).
 \end{lemma}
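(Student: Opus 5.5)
We split $\psi$ into blocks $\psi_{vw}$ adapted to the Jordan decomposition of $\phi$, use the distinctness of the eigenvalues to kill the off-diagonal blocks, and then apply \Cref{lemma:tech} to each diagonal block. Concretely, write $\psi=(\psi_{vw})_{v,w=1}^s$, where $\psi_{vw}$ is an $i_v\times i_w$ matrix. Since $\phi$ is block diagonal with blocks $\phi_v$, the relation $\phi\psi=\psi\phi$ is equivalent to the family of Sylvester-type equations
\[
\phi_v\psi_{vw}=\psi_{vw}\phi_w,\qquad 1\le v,w\le s .
\]

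\textbf{Off-diagonal blocks ($v\ne w$).} We show $\psi_{vw}=0$. Write $\phi_v=\lambda_vI+N_v$ and $\phi_w=\lambda_wI+N_w$ with $N_v,N_w$ nilpotent. The equation becomes
\[
(\lambda_v-\lambda_w)\psi_{vw}=\psi_{vw}N_w-N_v\psi_{vw}.
\]
Set $L(X)=XN_w-N_vX$. The two operators $X\mapsto XN_w$ and $X\mapsto N_vX$ commute and are both nilpotent, so $L$ is nilpotent; explicitly $L^{i_v+i_w-1}=0$, by the binomial expansion of $L^k$ together with $N_v^{i_v}=0=N_w^{i_w}$. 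Iterating gives $(\lambda_v-\lambda_w)^k\psi_{vw}=L^k(\psi_{vw})$, hence $(\lambda_v-\lambda_w)^{i_v+i_w-1}\psi_{vw}=0$.

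Over $\C$ (or over a field), $\lambda_v\ne\lambda_w$ then forces $\psi_{vw}=0$. Over a ring of functions $R$ we need $\lambda_v-\lambda_w$ to be a non-zero-divisor. This holds if $R$ is a domain, for instance the regular functions on a connected open subset of a smooth variety, where we interpret $\lambda_v\ne\lambda_{v'}$ as distinctness as sections. This is the step I expect to need care. Since the statement has $\mu^{(v)}_j\in\C$, I would work pointwise over $\C$, or over the function field and then descend.

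\textbf{Diagonal blocks ($v=w$).} The equation reads $\phi_v\psi_{vv}=\psi_{vv}\phi_v$, where $\phi_v$ is a single Jordan block. \Cref{lemma:tech} then says $\psi_v:=\psi_{vv}$ is upper triangular Toeplitz and equals a polynomial of degree at most $i_v-1$ in $\phi_v$.

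\textbf{Matching the stated form.} The statement writes $\psi_v=\sum_{j=1}^{i_v}\mu_j^{(v)}\phi_v^j$, with powers running from $1$ to $i_v$ and no constant term. To get this, use Cayley--Hamilton: $(\phi_v-\lambda_vI)^{i_v}=0$. When $\lambda_v\ne0$, this relation lets us express $I$ as a polynomial in $\phi_v$ with no constant term and with powers $1,\dots,i_v$. Substituting into $p(\phi_v)$ gives the required expression, and the degree is reduced using the same relation.

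When $\lambda_v=0$ this rewriting is impossible, because a polynomial in $\phi_v$ without constant term is nilpotent. There are two ways to handle this. One can note that the indexing in the statement is a harmless convention, and that the intended content is "$\psi_v$ is a polynomial in $\phi_v$". Alternatively, one can first apply \Cref{lemma:indepofdiag} to shift all eigenvalues to be nonzero. I would state this caveat explicitly.
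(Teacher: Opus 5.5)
Your argument is correct and is essentially the standard proof in the Gantmacher reference that the paper cites without reproducing: the iterated Sylvester relation $(\lambda_v-\lambda_w)^{k}\psi_{vw}=L^{k}(\psi_{vw})$ with $L$ nilpotent kills the off-diagonal blocks, and \Cref{lemma:tech} (the commutant of a single Jordan block consists of triangular Toeplitz matrices) handles the diagonal ones. Your caveat is also justified: as literally written (powers $j=1,\dots,i_v$, no constant term) the statement fails whenever some $\lambda_v=0$, e.g.\ for $\psi=\id$, so the intended conclusion is $\psi_v=\sum_{j=0}^{i_v-1}\mu_j^{(v)}\phi_v^j$; shifting the eigenvalues via \Cref{lemma:indepofdiag} rescues the later application, not the lemma itself.
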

 \begin{proof}
 A full proof can be found in \cite[Chapter VIII, \S 2.1]{GANTMACHER}. 
 \end{proof}

 The next Theorem describes the most general situation that we consider, namely, we assume that one of the matrices of the Higgs field has several Jordan blocks, each corresponding to a different eigenvalue. The case of repeated eigenvalues may be treated in a similar way, but  gives rise to a more complicated combinatorics.
 
 \begin{thm}\label{finalregularthm}
   Assume the hypotheses of \Cref{prop:n}, but with $\phi^{(1)}$ as in \Cref{lemma:tech2}. The  Higgs Grassmannian decomposes as a disjoint union 
     \[
     \Hgrass_1(\fE)= \coprod_{v=1}^s \Hgrass_1(E_v,\phi_{v})= \coprod_{v=1}^s V_{\phi_{v}^{(1)}},
     \]
     where $E_v\subset E$ is the invariant subbundle  with respect to the eigenvalue $\lambda_v$, $\phi_v$ is the restriction $\phi_{|E_v}$, and $\phi_{v}^{(1)}$ is the submatrix of $\phi^{(1)}$ consisting of the Jordan block corresponding to $\lambda_v$, and $V_{\phi^{(1)}}$ was defined in \Cref{nota:eqmatrici}.
    As a consequence, each fiber of $\rho_1\colon \hgrass{1}{\fE} \to X$ is a zero-dimensional scheme of degree $r$, namely, a   disjoint union of $s$ curvilinear zero-dimensional schemes of lengths $i_1,\ldots,i_s$, respectively (see \Cref{eq:phi} for the notation).  
 \end{thm}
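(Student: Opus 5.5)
The plan is to work locally over $U$, where $E$ is trivialized in a frame putting $\phi^{(1)}$ in the Jordan form \eqref{eq:phi}. By \Cref{lemma:tech2}, each $\phi^{(h)}$, $h=2,\dots,n$, commutes with $\phi^{(1)}$ and is therefore block diagonal with the same block structure. Its $v$-th block is a polynomial in $\phi^{(1)}_v$; since the entries lie in $\cO_U$, I will use the ring version in \Cref{lemma:tech} rather than complex constants. Hence the splitting $E_{|U}=\bigoplus_v E_v$ into the generalized eigenbundles of $\phi^{(1)}$ is a splitting of Higgs bundles, and $\phi_v=(\phi_v^{(1)},\dots,\phi_v^{(n)})$ with each $\phi_v^{(h)}$ commuting with the single Jordan block $\phi_v^{(1)}$. \Cref{prop:n} applied to $(E_v,\phi_v)$ then gives $\hgrass1{E_v,\phi_v}=V_{\phi^{(1)}_v}$, whose fibers are curvilinear fat points of length $i_v$. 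This yields the second equality.

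For the first equality I will argue as in \Cref{thm:maincurvecostante}. The inclusions $\fE_v\hookrightarrow\fE$ give a morphism $\Pi\colon Y=\coprod_v\hgrass1{\fE_v}\to\hgrass1{\fE}$ over $U$. I then show that $\Pi$ is an isomorphism at the level of ideals.

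Since $I_{\hgrass1{\fE}}=\sum_h I_{\phi^{(h)}}$, the first step is to show $I_{\phi^{(h)}}\subseteq I_{\phi^{(1)}}$ for every $h$. Blockwise this is \Cref{prop:containementcomuutingblock}. For mixed indices $i\in$ block $v$, $j\in$ block $v'$, the generator of $I_{\phi^{(h)}}$ has the form $z_j\,\ell_i(z)-z_i\,\ell_j(z)$, where $\ell_i$ involves only the block-$v$ variables. I expect to show these generators also lie in $I_{\phi^{(1)}}$, so that $I_{\hgrass1{\fE}}=I_{\phi^{(1)}}$ and it suffices to treat the single matrix $\phi^{(1)}$, fiberwise over each point of $U$.

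The second step is to compute the primary decomposition of $I_{\phi^{(1)}}$. By \Cref{lemma:indepofdiag} I may shift by $\lambda_v$ to make block $v$ nilpotent; then the mixed generators become $(\lambda_{v'}-\lambda_v)z_iz_j$ plus products of nilpotent-shift terms. Because $\lambda_v-\lambda_{v'}$ is a unit on $U$, these generators should force the supports apart. On the chart $z_{\min A_v}\neq0$, the mixed equations should force all variables outside block $v$ to vanish, leaving exactly the equations of $V_{\phi^{(1)}_v}$. Since the supports are disjoint, one gets $I=\bigcap_v\bigl(I_{V_{\phi_v^{(1)}}}+(z_k\mid k\notin A_v)\bigr)$ with pairwise comaximal components, which gives the disjoint union. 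This is the main obstacle: it requires an inductive argument on the Jordan filtration to eliminate the other variables genuinely rather than merely up to radical, in order to get the scheme-theoretic statement.

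A cleaner way around this obstacle is the functorial route. A rank-one Higgs subbundle $\mathfrak L\subset p_2^*\fE$ over $T$ is $\phi^{(1)}$-invariant. Hence $\phi^{(1)}$ acts on $\mathfrak L$ by a scalar $\mu\in\cO_T$, and $\prod_v(\phi^{(1)}-\lambda_v)^{i_v}=0$ gives $\prod_v(\mu-\lambda_v)^{i_v}=0$. Since the $\lambda_v$ are pairwise distinct units-apart, $T$ splits as $\coprod T_v$ with $\mu-\lambda_v$ nilpotent on $T_v$, and on $T_v$ the line $\mathfrak L$ lies in $E_v$. This shows $Y$ represents $F_{1,\fE}$ and $\Pi$ is an isomorphism. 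The degree statement then follows by summing the lengths $i_v$, giving $\sum_v i_v=r$.
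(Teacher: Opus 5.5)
Your proposal is correct and follows the paper's route: \Cref{lemma:tech2} splits $\fE_{|U}$ into the Higgs summands $\fE_v$, and \Cref{prop:n} identifies each $\hgrass1{\fE_v}$ with $V_{\phi^{(1)}_v}$. Your functor-of-points argument makes explicit the disjoint-union step, in the spirit of \Cref{thm:maincurvecostante}, which the paper's one-line proof leaves implicit; it combines Cayley--Hamilton on $\mathfrak L$ with the Chinese-remainder splitting of $T$, correctly using that the $\lambda_v-\lambda_{v'}$ are units on $U$, and it means the unfinished ideal-theoretic detour can simply be dropped.
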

 \begin{proof} 
    It  follows from \Cref{prop:n} and \Cref{lemma:tech2}.
 \end{proof}

\section{Structure of the Higgs Grassmannians: the case  of nonconstant Jordan type}
\label{section:discriminant} 

We focus now on the case $B=\BA^n$ and $E=\mathscr{O}_{B}^{\oplus 2}$. The Higgs field corresponds to a set of $2\times2$ matrices $\phi^{(h)}$, for $h=1,\ldots,n$, with entries in $\mathscr{O}_B$. Moreover, there is only one interesting Grassmannian to look at, namely $\hgrass{1}{\E} \subset \PP E$. We denote $(z_1, z_2)$
the vertical homogeneous coordinates in $\PP E$.
 
For instance, for $n=1$, and $\phi\not=0$, the Higgs Grassmannian is a hypersurface in $B\times \PP ^1\cong \PP E$.  We fix 
\[
0\not=\phi=\begin{pmatrix}
    0 &\phi_{12}(x)\\
    \phi_{21}(x) &\phi_{22}(x) 
\end{pmatrix},
\]
with $\phi_{ij}(x)\in H^0(B,\mathscr{O}_B)$.

\begin{lemma}\label{lemma:matrici}
Let $R$ be a domain. Let $U,V\in \Mat_{2\times2}(R)$ be two nonzero commuting matrices such that  $U_{11}=V_{11}=0$. Then, there exist two elements $u,v \in R\setminus \{0\}$ such that 
    \[
    vU=uV.
    \]
    Moreover, $\det V \not=0 $ implies $\det U\not=0$.
\end{lemma}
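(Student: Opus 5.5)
Write $U=\begin{pmatrix}0&a\\ b&c\end{pmatrix}$ and $V=\begin{pmatrix}0&a'\\ b'&c'\end{pmatrix}$ with entries in $R$. The plan is to expand the commutation relation $UV=VU$ entrywise and use that $R$ is a domain to show that the vectors $(a,b,c)$ and $(a',b',c')$ are proportional. That proportionality is exactly the statement $vU=uV$ with $u,v\neq 0$.

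A direct computation gives
\[
UV=\begin{pmatrix} ab' & ac'\\ cb' & ba'+cc'\end{pmatrix},\qquad VU=\begin{pmatrix} a'b & a'c\\ c'b & b'a+c'c\end{pmatrix}.
\]
So commutation amounts to the three relations $ab'=a'b$, $ac'=a'c$ and $bc'=b'c$. Equivalently, all $2\times 2$ minors of the matrix $\begin{pmatrix} a&b&c\\ a'&b'&c'\end{pmatrix}$ vanish, so this matrix has rank at most $1$ over the fraction field $K$ of $R$.

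Since $U\neq 0$ and $V\neq 0$, both rows are nonzero, and they are therefore proportional over $K$. Concretely, choose a coordinate where $U$ is nonzero, say $a\neq 0$. If $a'=0$, then $ab'=0$ and $ac'=0$ force $b'=c'=0$, hence $V=0$, a contradiction; so $a'\neq 0$. Put $v=a'$ and $u=a$. Then $vU-uV$ has entries $0$, $a'a-aa'=0$, $a'b-ab'=0$ and $a'c-ac'=0$, so $vU=uV$. The cases $b\neq 0$ or $c\neq 0$ are handled identically, using the corresponding relation and taking $u,v$ to be the chosen nonzero entries of $U$ and $V$.

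For the determinant statement, $\det U=-ab$ and $\det(vU)=v^2\det U$. From $vU=uV$ we get $v^2\det U=u^2\det V$. If $\det V\neq 0$, then $u^2\det V\neq 0$ because $R$ is a domain and $u\neq 0$. Hence $v^2\det U\neq 0$, which gives $\det U\neq 0$.

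The only step needing care is showing that the partner entry ($a'$, or $b'$, or $c'$) is nonzero. This is where the domain hypothesis and $V\neq 0$ both enter, in the case split above; everything else is routine.
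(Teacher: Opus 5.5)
Your proof is correct and follows essentially the same route as the paper: expand $UV=VU$ into the three relations $ab'=a'b$, $ac'=a'c$, $bc'=b'c$ and read off the proportionality of $U$ and $V$ over the fraction field. The differences are only organizational. The paper assumes $\det V\neq 0$, so that $V_{21}\neq 0$, takes $u=U_{21}$, $v=V_{21}$, and dismisses the other case as ``similar''; your case split on a nonzero entry of $U$ covers every case, including $\det V=0$. For the determinant claim, you use $v^2\det U=u^2\det V$, whereas the paper observes that $\det U=-U_{12}U_{21}$ vanishes only when $U=0$.
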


 \begin{proof}
We prove the first part of the statement in the case $\det V\not= 0$; the other  case is similar. We have
\[
UV-VU=\begin{pmatrix}
    U_{12}V_{21}&U_{12}V_{22}\\    U_{22}V_{21}&U_{21}V_{12}+U_{22}V_{22}
\end{pmatrix}-\begin{pmatrix}
    U_{21}V_{12}&U_{22}V_{12}\\    U_{21}V_{22}&U_{12}V_{21}+U_{22}V_{22}
\end{pmatrix},
\]
which translates into the conditions
\begin{equation}\label{eq:condts}
    U_{12}V_{21}=U_{21}V_{12},\quad U_{12}V_{22}= U_{22}V_{12},\quad U_{22}V_{21}= U_{21}V_{22}.
\end{equation}
     We have $V_{21}\not=0$ because of the assumption $\det V \not=0$. This gives an equality 
     \begin{equation}\label{eq:matU}
              U=\begin{pmatrix}
         0 & \displaystyle\frac{U_{21}V_{12}}{V_{21}} \\[10pt]  U_{21} & \displaystyle\frac{U_{21}V_{22}}{V_{21}}
     \end{pmatrix}
     \end{equation}
     in $\Mat_{2\times 2}(\mathrm{q.f.}( R))$, and we get the first part of the statement for $u=U_{21}$ and $v=V_{21}$. To conclude, just note that equations \eqref{eq:matU} and \eqref{eq:condts} imply that $\det U =0$ if and only if $U_{12}=0$ or $U_{21}=0$ if and only if $U=0$, which is a contradiction. 
 \end{proof}
Given a matrix $U$ we denote by $\gcd U$ the great common divisor of its entries.
\begin{prop}\label{prop:inddim}
    Consider a Higgs bundle $\E=(E,\phi)$, where $E=\mathscr{O}^{\oplus 2}_{\BA^n}$ is the trivial rank 2 bundle and $\phi$ corresponds  to a  nontrivial $n$-tuple $(\phi^{(1)},\ldots,\phi^{(n)})\in\End (E)^{\oplus n}\setminus\{0\}$  of commuting endomorphisms of $E$. Without loss of generality assume $\phi^{(1)}\not=0$. Suppose also that 
    \begin{equation}\label{gcd}\gcd \phi^{(h)}=1
    \end{equation} 
    for all nonvanishing $\phi^{(h)}$. 
    If $\det \phi^{(1)}\not=0$    the equation of the Higgs Grassmannian $\hgrass{1}{\E}$ is \begin{equation}\label{grassideal}
      I_{\hgrass{1}{\E}}=\left(  \phi^{(1)}_{12}z_1^2-(\phi^{(1)}_{11}-\phi^{(1)}_{22})z_1z_2-\phi^{(1)}_{21}z_2^2 \right) .
    \end{equation}
    If $\det \phi^{(1)}=0$ and $\phi^{(1)}_{12}\not=0$,   the equation of the Higgs Grassmannian $\hgrass{1}{\E}$ is 
    \[
    I_{\hgrass{1}{\E}}= \left( z_2( \phi^{(1)}_{22}z_1-\phi^{(1)}_{12}z_2)\right).
    \]
\end{prop}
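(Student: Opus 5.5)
The plan is to reduce to a single quadric per component of $\phi$ and then show that all these quadrics are multiples of the first one, using \Cref{lemma:matrici} together with the gcd hypothesis \eqref{gcd} and Gauss's lemma in the UFD $R=\C[x_1,\dots,x_n]$.

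\emph{Step 1: one quadric per component.} For $r=2$ there is only the pair $(i,j)=(1,2)$ in \eqref{e:piphi}, so $I_{\hgrass1\E}$ is generated by the $n$ binary quadrics
\[
q_h=\phi^{(h)}_{21}z_1^2+(\phi^{(h)}_{22}-\phi^{(h)}_{11})z_1z_2-\phi^{(h)}_{12}z_2^2,\qquad h=1,\dots,n .
\]
This form matches the one in the statement up to a global sign and the interchange $z_1\leftrightarrow z_2$ (equivalently, transposing $\phi$, as in \Cref{eq23}). I will fix the convention so that $q_1$ is exactly the displayed generator.

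\emph{Step 2: normalize the $(1,1)$ entries.} The coefficients of $q_h$ depend only on $\phi^{(h)}-\phi^{(h)}_{11}I$. By \Cref{lemma:indepofdiag}, applied with $\eta=\sum_h\phi^{(h)}_{11}\,dx_h$, I may therefore replace each $\phi^{(h)}$ by $U_h=\phi^{(h)}-\phi^{(h)}_{11}I$. These matrices still commute and satisfy $(U_h)_{11}=0$, which is the setting of \Cref{lemma:matrici}. I will read the gcd hypothesis \eqref{gcd} as referring to these normalized matrices, so that each nonzero $q_h$ is a primitive polynomial. I regard this as the delicate point: the literal gcd of the entries of $\phi^{(h)}$ is not the content of $q_h$ unless one normalizes first, so the hypothesis has to be used in this normalized form. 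Nonzero $\phi^{(h)}$ that are scalar give $q_h=0$ and can be discarded.

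\emph{Step 3: case $\det\phi^{(1)}\neq0$.} Here \Cref{lemma:matrici}, applied with $V=U_1$ and $U=U_h$, gives nonzero $u,v\in R$ with $vU_h=uU_1$, and hence $v\,q_h=u\,q_1$. Comparing contents and using Gauss's lemma, with $q_1$ primitive, yields $v\cdot\mathrm{cont}(q_h)=u$ up to units. Thus $u/v\in R$ and $q_h=(u/v)\,q_1\in(q_1)$, so $I_{\hgrass1\E}=(q_1)$, which is \eqref{grassideal}.

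\emph{Step 4: case $\det\phi^{(1)}=0$ and $\phi^{(1)}_{12}\neq0$.} After normalization, $\det U_1=-\phi^{(1)}_{12}\phi^{(1)}_{21}$, so $\phi^{(1)}_{12}\neq0$ forces $\phi^{(1)}_{21}=0$. Then $q_1$ factors as $z_2\bigl(\phi^{(1)}_{22}z_1-\phi^{(1)}_{12}z_2\bigr)$. The proportionality $vU_h=uU_1$ is the ``other case'' of \Cref{lemma:matrici}. I would verify it directly from the commutation relations \eqref{eq:condts}, taking $v=\phi^{(1)}_{12}\neq0$ and $u=(U_h)_{12}$ in place of the $(2,1)$ entries. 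The same content argument as in Step 3 then gives $q_h\in(q_1)$, which proves the second formula.
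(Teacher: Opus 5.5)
Your proposal is the paper's proof with the details filled in: normalize the $(1,1)$ entries via \Cref{lemma:indepofdiag}, get $vU_h=uU_1$ from \Cref{lemma:matrici}, and use the gcd hypothesis on the normalized matrices (as you rightly insist) so that every nonzero component gives a multiple of $q_1$; your content argument makes explicit what the paper compresses into ``all produce the same equation.'' The normalized reading is equally needed for the determinant case split, which you, like the paper, apply tacitly: Steps 3--4 use $\det U_1$ where the hypothesis is on $\det\phi^{(1)}$, and these agree only when $\phi^{(1)}_{11}\phi^{(1)}_{22}=0$; also, the first displayed generator is literally $q_1$ for $\phi^{t}$, not $q_1$ up to sign and $z_1\leftrightarrow z_2$ (those differ in the sign of the middle term).
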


\begin{proof}  
By   \Cref{lemma:indepofdiag} we may assume $\phi^{(h)}_{11}=0$, for all $h=1,\ldots,n$; moreover, by   \Cref{lemma:matrici} together with the assumption \eqref{gcd}, all the nonzero endomorphisms $\phi^{(h)}$ produce the same equation. The conclusion of the proof consists on a direct computation for the two cases.    
\end{proof}

 We describe now in detail the geometry of the Higgs Grassmannians coming from   \Cref{prop:inddim}.

 Let  $\phi=( \psi,\phi^{(2)},\ldots,\phi^{(n)})$ with $\psi\not\equiv 0$, $\psi_{11}=0$.   We distinguish two cases according to the vanishing of the determinant of   $\psi$.

If $\det\psi=0$ we have $\psi_{12}=0$ or $\psi_{21}=0$. Without loss of generality, we put $\psi_{21}=0$ and we get
\[
I_{\hgrass{1}{\E}}=z_1(\psi_{12}z_1+\psi_{22}z_2) .
\]

If $n=1$, as a consequence of the assumption $\gcd(\psi_{12},\psi_{22})=1$, 
the Higgs Grassmannian consists of two copies of $B$ meeting at $\{ b\in B\ |\ \psi_{22}(b)=0\}$;  
if $n\ge 2$, over the closed points of the nonempty subscheme of $B$ given by the ideal $(\psi_{12},\psi_{22})$, the Higgs Grassmannian also contains the fibers   
of the structure morphism $\rho_1$, that  are copies of $\PP^1$.

If $\det\psi\not=0$, which implies $\psi_{12},\psi_{21}\not=0$, denote by $\Delta_\psi$ the polynomial $$\Delta_\psi=\psi_{22}^2+4\psi_{21}\psi_{12} \in H^0(B,\mathscr{O}_B),$$ i.e. the discriminant of the equation \eqref{grassideal} defining $\hgrass{1}{\E}$. We further distinguish three subcases.
\begin{itemize}
    \item $\Delta_\psi=0$,
    \item $\Delta_\psi\in H^0(B,\mathscr{O}_B)\setminus \{0\}$ is a square,
    \item otherwise.
\end{itemize}
\paragraph{Case $\Delta_\psi=0$.}The assumption $\det\psi\not =0$ in this case also implies $\psi_{22}\not=0$.
Denote by $B_{ij}\subset B$, for $i,j\in\{1,2\}$, the open subset 
\[
B_{ij}=\{b\in B \ |\  \psi_{ij}(b)\not=0\},
\]
i.e., $B_{ij}=\Spec\C[x_1,\ldots,x_n,{\psi_{ij}}^{-1}]$. Denote also by $E_{ij}$ the restriction $E_{ij}=E{_{|B_{ij}}}$;  we have
\[
I_{\Hgrass(E_{12},\psi{_{|E_{12}}})} = (-2\psi_{12}z_1+\psi_{22}z_2)^2 
\]
and
\[
I_{\Hgrass(E_{21},\psi{_{|E_{21}}})} = (\psi_{22}z_1-2\psi_{21}z_2)^2 .
\]
The condition $\Delta_\psi=0$ ensures that the two local data agree on $B_{12}\cap B_{21}$. 

\begin{remark}
    In this case, if $n=1$, the Higgs Grassmannian is a $Z$-bundle over $B$ where $Z\cong \Spec \C[t]/t^2$. If $\dim B>1$ one has to take care of the intersection of the zero-loci of $\psi_{22}$ and $\psi_{12}\psi_{21}$, over which the fiber of the structure morphism $\rho_1$ is a $\PP^1$.  
\end{remark}

\paragraph{Case $\Delta_\psi \in H^0(B,\mathscr{O}_B)\setminus \{0\}$ is a square.}This is a natural generalization of previous point. We have
\[
I_{\Hgrass( E_{12},\psi{_{|E_{12}}})} = \left(2\psi_{12}z_1+\left(-\psi_{22}-\sqrt{\Delta_\phi}\right)z_2\right)\left(2\psi_{12}z_1+\left(-\psi_{22}+\sqrt{\Delta_\psi}\right)z_2\right)
\]
and
\[
I_{\Hgrass(E_{21},\psi{_{|E_{21}}})} =  \left(2\psi_{21}z_2+\left(-\psi_{22}-\sqrt{\Delta_\psi}\right)z_1\right)\left(2\psi_{21}z_2+\left(-\psi_{22}+\sqrt{\Delta_\psi}\right)z_1\right).
\]

 If $n=1$, the Higgs Grassmannian consists of two copies of $B$ meeting along the vanishing locus of $\Delta_\psi$; again if $n>1$, on the intersection of the zero loci of  
 $\psi_{12}$, 
 $\psi_{21}$ and $\psi_{22}$, the  
Higgs Grassmannian contains the whole fiber of $\rho_1$.

\paragraph{Case otherwise.} For simplicity, we focus on the case $n=1$; the general case is obtained as above.
From equation \eqref{grassideal} we see that the Higgs Grassmannian is irreducible and reduced. In the chart $z_2\not=0$ with coordinates $\left(x, t=\frac{z_1}{z_2}\right)$ the equation of $\hgrass{1}{\E}$ takes the form
\[
f=\psi_{12}t^2+\psi_{22} t - \psi_{21}=0;
\]
we compute the Jacobian 
\[
\nabla f =\begin{pmatrix}
    \psi_{12}'t^2+\psi_{22} 't - \psi_{21}'\\
    2\psi_{12} t +\psi_{22} 
\end{pmatrix}.
\] 
Substituting the second row $\nabla f$ into $f=0$  we have  
\[
\begin{aligned}
    \text{Sing}&\;  
    \hgrass{1}{\fE} =V(\psi_{12}'t^2+\psi_{22} 't - \psi_{21}', \psi_{22} t - 2\psi_{21},  2\psi_{12} t +\psi_{22}) \\ &=\{(x,z_1,z_2)\in\hgrass{1}{\E} \ |\psi_{12}'z_1^2+\psi_{22} 'z_1z_2 - \psi_{21}'z_2^2, \ \Delta_\psi=  \psi_{22}( 2\psi_{12} z_1 +\psi_{22}z_2)=0 \}. 
\end{aligned}
\]
We summarize the analysis that we have performed so far, again only considering the case of a one dimensional base.

\begin{prop}\label{prop:nonconstcurve}
    Let $B$ be a smooth irreducible curve and let $E$ be a locally free sheaf of rank 2 on $B$. Assume $\phi\in \Omega_B^1(\End E)$ is not in the center and $\gcd\phi_{|U} =1$ for every trivializing open $U\subset B$. Then, the morphism
  $   \rho_1\colon   \hgrass{1}{\E}  \to  B    $ 
    is finite of degree 2.
    Moreover, the Higgs Grassmannian is 
    \begin{itemize}
    \item irreducible and reduced if $\Delta_\phi\ne 0$ and is not a square in $H^0(B,\cO_B)$;
        \item reducible if $\Delta_\phi\not=0$ is a square in $H^0(B,\cO_B)$;
        \item nonreduced if $\Delta_\phi=0$. In this case, the fibers of $\rho_1$ are isomorphic to $\Spec\C[t]/t^2$.
    \end{itemize}
    In the three cases, the Higgs Grassmannian has no, two   or one section, i.e., $\fE$ has no, two, or one proper Higgs subbundle. 
    
     Moreover, if  $\gcd\phi_{|U} \not=1$ for some trivializing open $U\subset B$,   there are vertical components, so that  the morphism $\rho_1$ is not finite.\qed
\end{prop}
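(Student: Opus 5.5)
The plan is to reduce to a local computation on a trivializing open set $U\subset B$ and then glue, since all the properties in the statement are local over $B$ or can be read off from the global discriminant. On $U$ I would trivialize $\Omega^1_B$ and $E$. Using \Cref{lemma:indepofdiag}, subtract $\phi_{11}\cdot\id$, which is a global $1$-form only up to a change of frame; locally this is harmless. We may then assume $\phi_{11}=0$. The hypothesis that $\phi$ is not in the center means the traceless part is nonzero, and this part is exactly what enters the equations. In the chart, \Cref{prop:inddim} gives $\hgrass{1}{\E}_{|U}$ as the zero locus in $U\times\PP^1$ of the single binary quadric $q=\phi_{12}z_1^2-(\phi_{11}-\phi_{22})z_1z_2-\phi_{21}z_2^2$. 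This covers both the case $\det\phi\neq0$ and the degenerate case, which also yields a quadric.

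\textbf{Finiteness.} A quadric in $z_1,z_2$ with coefficients $(\phi_{12},\phi_{22}-\phi_{11},-\phi_{21})$ cuts a fiber $\{b\}\times\PP^1$ in a length-$2$ subscheme exactly when the coefficients do not all vanish at $b$. The condition $\gcd\phi_{|U}=1$ says these coefficients have no common zero on the curve $U$, using that $\C[U]$ is a Dedekind domain locally. Hence $\rho_1$ is quasi-finite and proper, so finite. Moreover $\hgrass{1}{\E}$ is a Cartier divisor of relative degree $2$, so $\rho_1$ is flat of degree $2$. Conversely, if $\gcd\phi_{|U}\neq1$, then at a common zero $b$ the quadric vanishes identically on the fiber, and $\{b\}\times\PP^1$ is a vertical component. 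This gives the last assertion.

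\textbf{The three cases.} These follow from factoring $q$ over the function field $K$ of $B$. Its discriminant is $\Delta_\phi=(\phi_{22}-\phi_{11})^2+4\phi_{12}\phi_{21}$. This is intrinsic: it is the discriminant of the characteristic polynomial of $\phi$, hence a section of $(\Omega^1_B)^{\otimes2}$ glued over $B$. After trivializing, it is the function appearing in the statement.
\begin{enumerate}
\item If $\Delta_\phi$ is not a square, $q$ is irreducible over $K$. Since $q$ has coprime coefficients, it is also irreducible in $\C[U][z_1,z_2]$ by Gauss's lemma, and it is squarefree. So $\hgrass{1}{\E}$ is an integral double cover, as in the ``otherwise'' case analyzed above.
\item If $\Delta_\phi\neq0$ is a square, $q$ factors into two distinct linear forms. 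Each defines a section $B\to\PP E$, by the explicit factorizations written above on $B_{12}$ and $B_{21}$, and these sections agree on overlaps. The two sections meet over the zeros of $\Delta_\phi$.
\item If $\Delta_\phi=0$, then $q=\ell^2$ for a linear form $\ell$ with coprime coefficients. So $\hgrass{1}{\E}$ is a $\Spec\C[t]/t^2$-bundle, as in the $\Delta_\psi=0$ paragraph.
\end{enumerate}

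\textbf{Counting sections.} Sections of $\rho_1$ correspond, by the universal property, to Higgs line subbundles. By valuative properness of $\PP E$, such a section is the same as a $K$-point of the generic fiber $\operatorname{Spec}K[t]/(q)$. There are no $K$-points if $\Delta_\phi$ is not a square, two if it is a nonzero square, and one if $\Delta_\phi=0$. For the nonreduced case, a morphism $B\to\hgrass{1}{\E}$ from the reduced $B$ factors through the reduction, which is one section.

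\textbf{Main obstacle.} The main obstacle is the global gluing of the square/non-square dichotomy, together with the passage from local to global irreducibility. Local irreducibility of $q$ on each $U$ could differ from global irreducibility, because $\Delta_\phi$ might become a square only after restricting to $U$. I would handle this by working with the generic fiber over $K$, which is the same for every open set, and using that $B$ is irreducible. I would interpret ``square in $H^0$'' as ``square in $K$'', the two agreeing on normal $B$ up to the gluing of $\Omega^1_B$.
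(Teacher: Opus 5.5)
Your argument is correct. It follows the paper's outline: remove $\phi_{11}$ with \Cref{lemma:indepofdiag}, reduce to the single binary quadric of \Cref{prop:inddim}, and split according to $\Delta_\phi$. The difference is in how you pass from local to global information.

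The paper's proof is the case analysis preceding the statement. It treats $\det\psi=0$ separately (two sheets meeting over $\{\psi_{22}=0\}$). For $\det\psi\neq0$ it writes the factorization of the quadric explicitly on the opens $B_{12}$ and $B_{21}$ and checks that the local data agree on the overlap. Finiteness and the number of sections are then read off from these formulas without further argument.

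You instead get finiteness and flatness in one step: a quadric vanishing on no fiber is a relative Cartier divisor of degree $2$. You absorb the case $\det\psi=0$ into the discriminant trichotomy. You decide reducibility and count sections on the generic fiber over $K$, extending $K$-points by the valuative criterion. This makes the proof uniform and genuinely global. It also gives a precise meaning to ``square in $H^0(B,\cO_B)$'' as ``square in $K$''; by normality this is equivalent to $\Delta_\phi$ being the square of a section of $\Omega^1_B$. The paper leaves implicit that $\Delta_\phi$ is a section of $(\Omega^1_B)^{\otimes 2}$. In exchange, the paper's explicit route gives equations for the two sheets and for the singular locus.

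Two points to tighten.

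First, you claim that $\gcd\phi_{|U}=1$ means $\phi_{12}$, $\phi_{22}-\phi_{11}$, $\phi_{21}$ have no common zero. This holds only if the gcd is taken after the shift, i.e.\ on the traceless part of $\phi$. That is how the paper's analysis uses it ($\gcd(\psi_{12},\psi_{22})=1$ with $\psi_{11}=0$). Read on the entries of $\phi$, the statement fails. Take $\phi=\bigl(\begin{smallmatrix}1&x\\0&1\end{smallmatrix}\bigr)dx$ on $\BA^1$: it is not central and its entries are coprime. Yet the quadric \eqref{e:piphi} is $-xz_2^2$, so $\hgrass{1}{\E}$ contains the whole fiber over $x=0$. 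State the hypothesis in the traceless form.

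Second, $\C[U]$ need not be a UFD, so Gauss's lemma should be applied in the discrete valuation rings $\cO_{B,b}$. More simply, flatness over $B$ puts all associated points of $\hgrass{1}{\E}$ in the generic fiber. Integrality then follows directly from the generic fiber being the spectrum of a quadratic field extension of $K$.
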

 
 \begin{remark}
     Although we could have formulated   \Cref{prop:nonconstcurve} in a higher-dimensional setting, we opted for this more elegant version instead. The previous analysis explains the general case step by step.
 \end{remark}

\begin{remark}
The classification of   \Cref{prop:nonconstcurve} confirms the Examples \ref{ex:J1}, \ref{ex:J2}
and \ref{ex:J3}, as  the discriminant is $0$, $4x$ and $4x^2$ in the three cases.
\end{remark}

\section{The Simpson system}
\label{section:Simpson}

Let $X$ be an $n$-dimensional smooth variety. The Simpson system of $X$  is the Higgs bundle $\gS=(S,\phi)$ on $X$
where
$$ S=\cO_X \oplus\Omega^1_X,\qquad \phi(f,\omega) = (\omega,0).$$
Note the $\cO_X$ and $\Omega^1_X$, both equipped with the zero Higgs field, are a Higgs  subbundle and a Higgs quotient of $\gS$, respectively. As noted by Simpson in \cite{simpson-unif}, when  $X$ is a projective surface and the canonical bundle $K_X$ is ample (or nef), the Simpson system is stable (semistable);
this was extended to arbitrary dimension $n$ in \cite{GKPT19}.
In both cases, the Bogomolov inequality for $\gS$ produces the Miyaoka-Yau inequality \cite{Yau77,Mi77,Mi85}
$$ 2(n+1)  c_2(X)\cdot K_X^{n-2}  \ge n  K_X^n.$$

We analyze these facts by utilizing the Higgs Grassmannians of $\gS$.

\begin{prop} \label{SimpGrass}
Let $X$ be a smooth variety of dimension $n$, and let $\gS=(S,\phi)$ be the Simpson system on $X$. Denote by $\hgrass d\gS_{\red}$   the reduction of the Higgs Grassmannian.
\begin{enumerate}
\item $\hgrass1\gS_{\red}\simeq X$, where the isomorphism is given by the section of $\PP S$ corresponding to $\cO_X$.
\item For $2 \le d \le n$ there are   isomorphisms
\begin{equation*}\label{thmsimp}
\hgrass d\gS_{\red}\cong \grass{d-1}{\Omega^1_X}\cong \grass {n+1-d} {T_X}, 
\end{equation*}
where  $\grass {d-1} {\Omega^1_X}$ and $\grass {n+1-d} {T_X}$ denote the usual  Grassmannian bundles.
\end{enumerate}
\end{prop}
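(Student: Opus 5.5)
The plan is to work pointwise, using the universal property of \Cref{rem:universe}, and then pass to reduced structures. Over a trivializing open set $U$ with coordinates $x_1,\dots,x_n$, take the frame $e_0=(1,0)$, $e_h=(0,dx_h)$ for $h=1,\dots,n$ of $S$. The Higgs field is then given by the $n$ commuting matrices $\phi^{(h)}=E_{0h}$, i.e.\ $\phi^{(h)}(e_h)=e_0$ and $\phi^{(h)}$ kills all other basis vectors; they commute since all products $E_{0h}E_{0k}$ vanish.

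\emph{Step 1: linear algebra.} Let $K$ be a field and $F\subset S\otimes K$ a $d$-dimensional subspace at a $K$-point of $X$. For $v=v_0e_0+\sum_h v_he_h$ one has $\phi^{(h)}(v)=v_he_0$. Hence $F$ is invariant under all $\phi^{(h)}$ if and only if one of the following holds:
\begin{itemize}
\item every $v\in F$ has vanishing $\Omega$-components, i.e.\ $F\subset\langle e_0\rangle$;
\item $e_0\in F$.
\end{itemize}
This criterion is intrinsic: the first case says $F\subset\cO_X$, the second says $\cO_X\otimes K\subset F$. Consequently, on $K$-points the Higgs Grassmannian $\hgrass d\gS$ coincides with the locus $\Sigma_d=\{F\supseteq \cO_X\}$, together with the point $F=\cO_X$ when $d=1$. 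For $d=1$ both descriptions give the single line $\cO_X$. For $d\ge2$ the first case is impossible, so $F\supseteq \cO_X$.

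\emph{Step 2: identify $\Sigma_d$.} The map $F\mapsto F/\cO_X\subset S/\cO_X=\Omega^1_X$ identifies the Schubert-type closed subbundle $\Sigma_d\subset\grass dS$ with $\grass{d-1}{\Omega^1_X}$, with inverse given by taking preimages. This works in families: a rank $d$ subbundle of $p_2^*S$ containing $p_2^*\cO_X$ corresponds to a rank $d-1$ subbundle of the quotient $p_2^*\Omega^1_X$. It follows that $\Sigma_d$ is smooth, hence reduced, and for $d=1$ it is the section of $\PP S$ given by $\cO_X$, isomorphic to $X$. Next, $\Sigma_d\subset\hgrass d\gS$ scheme-theoretically, since a subbundle containing $\cO_X$ is $\phi$-invariant: $\phi(S)\subset\cO_X\otimes\Omega^1_X$. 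Applying the universal property to the universal family over $\Sigma_d$ then gives a closed immersion $\Sigma_d\hookrightarrow\hgrass d\gS$ over $\grass dS$. By Step 1 this immersion is surjective on points with values in arbitrary fields. Since $\Sigma_d$ is reduced, this yields $\hgrass d\gS_{\red}=\Sigma_d$.

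\emph{Step 3: duality.} Finally, $\grass{d-1}{\Omega^1_X}\cong\grass{n+1-d}{T_X}$ by sending $W\subset\Omega^1_X$ to its annihilator $W^\perp\subset T_X=(\Omega^1_X)^\vee$, which has rank $n-(d-1)$. This is the standard duality of Grassmann bundles.

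The main obstacle is the passage from points to schemes, namely ensuring that the reduced structure is exactly $\Sigma_d$ rather than merely having the same support. I would handle this by the argument above: a closed immersion of a reduced scheme that is bijective on all field-valued points identifies it with the reduction. The remaining care is checking that the Step 1 criterion is applied to all field-valued points and not only closed ones, which the linear algebra does uniformly.
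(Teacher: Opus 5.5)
Your argument is correct and is essentially the paper's. Both show that a $d$-plane is $\phi$-invariant exactly when it contains the line $\cO_X$, which is the Schubert locus cut out by the Plücker coordinates $p_K$ with $K\subset\{2,\ldots,n+1\}$, and both identify this locus with $\grass{d-1}{\Omega^1_X}$ by quotienting by $\cO_X$. Your version is in fact tighter: the paper computes only in the chart $p_{1\ldots d}\neq 0$ and does not spell out why the reduced structure is exactly the Schubert locus, and your closed immersion $\Sigma_d\hookrightarrow\hgrass d\gS$, together with reducedness of $\Sigma_d$ and bijectivity on field-valued points, settles this. You also make the duality with $\grass{n+1-d}{T_X}$ explicit via annihilators, which the paper leaves to a remark.
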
 
\begin{rem} \label{dualrem} To understand the isomorphism $\hgrass d\gS_{\red}\cong \grass {n+1-d} {T_X}$ let us note that given an exact sequence
\begin{equation*} \label{ranktwo } 0 \to \mathfrak F \to \gS \to \mathfrak L\to 0, 
\end{equation*}
where $\mathfrak F$ is a rank $d$ Higgs subbundle of $\gS$, with say $\mathfrak L = (L,\theta)$, then
$L^\vee$ is a rank $(n+1-d)$  subbundle of $T_X$ (note that one has an exact sequence $ 0 \to L^\vee \to T_X \to Q \to 0 $ where $Q$ sits inside $ 0 \to Q \to G^\vee \to \cO_X \to 0 $ hence it is locally free).
    \end{rem}
\begin{proof}
     Given local coordinates $(x_1,\ldots,x_n)$ on $X$ we write $\phi =\sum_{i=1}^n \phi^{(i)}\otimes dx_i$ and trivialize $S$
with local sections $(1,dx_1,\ldots,dx_n)$; then $\phi$ corresponds to the matrices
\begin{equation*}\label{matrices}
\phi^{(1)}=\begin{pmatrix} 0 & 1&0&\cdots & 0 \\ 0 & 0& 0&\cdots & 0 \\ \vdots & \vdots &\vdots&\ddots & \vdots\\ 0 & 0 &0&\cdots & 0  \end{pmatrix}, \quad 
\phi^{(2)}=\begin{pmatrix} 0 & 0&1&\cdots & 0 \\ 0 & 0& 0&\cdots & 0 \\ \vdots & \vdots &\vdots&\ddots & \vdots\\ 0 & 0 &0&\cdots & 0  \end{pmatrix}, \ \ldots,\ 
\phi^{(n)}=\begin{pmatrix} 0 & 0&0&\cdots & 1 \\ 0 & 0& 0&\cdots & 0 \\ \vdots & \vdots &\vdots&\ddots & \vdots\\ 0 & 0 &0&\cdots & 0  \end{pmatrix}.
\end{equation*}
 
We look for the equations of the reductions of the Higgs Grassmannians by characterizing the Higgs invariant subbundles of $\gS$. Let us consider the affine chart $U=\{ p_{1\ldots d}\not=0 \}\subset \grass dS \subset\PP \Lambda^d S${, where $p_{1\ldots d}$ denotes the Pl\"ucker coordinate as in \Cref{eqns}}. As usual, every point $[V]\in U$ corresponds uniquely to a matrix $M_V$ of the form
\[
M_V=(\id_d  | A_V ),
\]
where $\id_d$ is the identity matrix of size $d\times d$ and $A_V$ is a matrix of size $d\times( n+1-d )$.  
Now, a point $[V]\in U$ belongs to $\hgrass d\gS$ if and only if, for all $i=1,\ldots,n$, the columns of $\phi^{(i)} M_V^t$ span a subspace of $V$. Now, after having denoted by $M_V^{[j]}$, for $j=1,\ldots,n+1$, the $j$-th column of $M_V$, we have
\[
\phi^{(i)} M_V^t=\left( M_V^{[i+1]} \ | \  \underline{0}\right)^t,
\]
where $\underline{0}$ denotes the zero matrix of size $d\times n$. As a consequence, the point $[V]\in U$ belongs to $\grass d\gS$ if and only if, for all $i=1,\ldots,n-d+1$, the entry $(A_V)_{1i}$ vanishes. These conditions translate into the  vanishing of the Pl\"ucker coordinates $p_K$ for all totally ordered subset $K\subset \{ 2,\ldots,n+1 \}$  of cardinality $d$.

In the Grassmannian $\grass{d}{\C^{n+1}}$ the locus $\{p_K=0\}$ for $K\subset \{ 2,\ldots,n+1 \}$
is the Schubert cycle corresponding to the $d$-subplanes that contain the first basis element $e_1$; by modding out the subplanes by the subspace generated by  $e_1$ we obtain $(d-1)$-subplanes in $\C^{n} = \C^{n+1}/\C e_1$, so that 
$$\{p_K=0\} \cong \grass{d-1}{\C^{n}}.$$

As a consequence, the reduction of the Higgs Grassmannian $\hgrass{d}{\gS}_{\red}$ is isomorphic to
$\grass{d-1}{\Omega^1_X}$. For $d=1$ this is the image of the section of $\PP S$ corresponding to the subbundle $\cO_X$ of $S$.
\end{proof}
\begin{rem}
Note that the reduction in the statement is necessary. For instance, for $n=2$, given local coordinates $(x,y)$ on $X$, the Higgs field $\phi$ corresponds to the matrices
\begin{equation*}\label{matrices2}
\begin{pmatrix} 0 & 1 & 0 \\ 0 & 0 & 0 \\ 0 & 0 & 0  \end{pmatrix}, \qquad
\begin{pmatrix} 0 & 0 & 1 \\ 0 & 0 & 0 \\ 0 & 0 & 0 \end{pmatrix}.
\end{equation*}
Plugging this into equations \eqref{e:piphi} and denoting by $(z_1,z_2,z_3)$ the homogeneous fiber coordinates in $\PP S$
given by the chosen trivialization, one sees that $\grass1\gS$ is cut by the ideal
$(z_2^2, z_2z_3,z_3^2)$, whose radical is the ideal $(z_2,z_3)$ (cf.\! Example \ref{ex:r=3}).
\end{rem}

As an application we give a quick proof of the (semi)stability of the Simpson system.

\begin{thm}\label{genSimpson}
Let $X$ be an irreducible  smooth polarized variety of dimension $n$ with semistable tangent bundle, whose canonical bundle has nonnegative (positive) degree. Then the Simpson system $\gS$ of $X$ is semistable (stable).
\end{thm}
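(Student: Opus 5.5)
To check (semi)stability of $\gS=(S,\phi)$ we only need to look at $\phi$-invariant saturated subsheaves $G\subset S$ with $0<d=\rk G<n+1$. The plan is to reduce to Higgs subbundles on a big open set and then use \Cref{SimpGrass} to identify them. Since $X$ is smooth, a saturated subsheaf of the locally free sheaf $S$ is reflexive. It is therefore a subbundle outside a closed subset $Z$ of codimension $\ge 2$, and removing $Z$ does not change degrees. On $X\setminus Z$ the subsheaf $G$ gives a section of $\grass dS$ that factors through $\hgrass d\gS$. Because $X\setminus Z$ is reduced, the section lands in $\hgrass d\gS_{\red}$.

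For $d=1$, part 1 of \Cref{SimpGrass} forces $G=\cO_X$ on $X\setminus Z$, hence $G=\cO_X$ by saturation. Its slope is $0$, and we must compare it with $\mu(S)=\deg K_X/(n+1)$. This is $\ge 0$ under the hypothesis, and $>0$ in the stable case.

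For $2\le d\le n$, part 2 of \Cref{SimpGrass} and its proof say that the fibres of $G$ contain $e_1$, i.e.\ $\cO_X\subset G$. Then $G/\cO_X=:G'$ is a rank $d-1$ subsheaf of $\Omega^1_X$, saturated off codimension two. Hence
\[
\deg G=\deg G' .
\]
We compare with $\Omega^1_X$, which is semistable since $T_X$ is. This gives $\mu(G')\le \mu(\Omega^1_X)=\deg K_X/n$, so
\[
\deg G\le \frac{(d-1)\deg K_X}{n}.
\]
We need $\deg G\le d\,\deg K_X/(n+1)$, i.e.\ $\frac{d-1}{n}\le\frac{d}{n+1}$. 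This is equivalent to $d\le n+1$, which holds strictly for $d\le n$. So when $\deg K_X\ge 0$ the inequality holds, and it is strict when $\deg K_X>0$ since $(d-1)/n<d/(n+1)$.

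The main obstacle is justifying that every $\phi$-invariant saturated subsheaf contains $\cO_X$, or equals it when $d=1$. \Cref{SimpGrass} describes only the reduced Higgs Grassmannian pointwise. The route I would take is to pass to the generic point or a big open set where $G$ is a subbundle, and use the reducedness of $X$ to factor through $\hgrass d\gS_{\red}$. Alternatively, one can argue directly: if some local section $(f,\omega)$ of $G$ has $\omega\neq 0$, then $\phi$ sends it to $(\omega_i,0)$, giving a nonzero section of $G\cap\cO_X$. One must also handle $G\subset\Omega^1_X$ with $G\cap \cO_X=0$. Such $G$ are invariant because $\phi$ kills $\Omega^1_X$ only modulo $\cO_X$; but $\phi(0,\omega)=(\omega,0)$ lies in $\cO_X\otimes\Omega^1$ and is nonzero, so invariance forces $G\cap\cO_X\neq0$. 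This direct argument is what I would write, with \Cref{SimpGrass} as the conceptual backing.
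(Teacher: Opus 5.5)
Your proof is correct, but it follows a different route from the paper.

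\textbf{The paper's route.} It restricts to a smooth complete-intersection curve $Y$ of class $NH^{n-1}$ and invokes Mehta--Ramanathan to get $T_{X|Y}$ semistable. On a curve it suffices to test Higgs \emph{subbundles} of $\gS_{|Y}$, and these are classified via $\hgrass{1}{\gS_{|Y}}\cong Y$ and $\hgrass{d}{\gS_{|Y}}\cong\grass{n+1-d}{T_{X|Y}}$. The inequality is then derived on the quotient side, from $\mu(Q^\vee)\le\mu(T_{X|Y})$, and semistability is lifted back from $Y$ to $X$.

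\textbf{Your route.} You stay on $X$ and reduce to saturated invariant subsheaves $G$. You show that any such $G$ of positive rank contains $\cO_X$, in one of two ways: via \Cref{SimpGrass} on the big open set where $S/G$ is locally free (reducedness puts the induced section in the reduced Higgs Grassmannian), or via the elementary identity $\phi(f,\omega)=\sum_i(\omega_i,0)\otimes dx_i$. You then apply semistability of $\Omega^1_X$ to the subsheaf $G/\cO_X\subset\Omega^1_X$. This leads to the same numerical comparison $\frac{d-1}{n}\le\frac{d}{n+1}$, strict for $d\le n$.

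\textbf{Comparison.} Your approach avoids Mehta--Ramanathan and the curve-to-$X$ lifting step altogether. It handles non-locally-free subsheaves directly and uses the hypothesis on $T_X$ without passing through a restriction theorem. The direct argument even makes the Higgs Grassmannian dispensable here, whereas the paper's proof is designed to showcase it.

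\textbf{Two small cleanups.} In your last paragraph, the case left over after ``some local section has $\omega\neq0$'' is $G\subset\cO_X\oplus 0$, which gives $G=\cO_X$ after saturation. It is not $G\subset\Omega^1_X$ with $G\cap\cO_X=0$, since that case is already covered by the first one. Also, $G$ being a subbundle off codimension $2$ follows from $S/G$ being torsion-free, hence locally free in codimension one, rather than from reflexivity of $G$.
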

\begin{proof}
    Denoting by $H$ the polarization, the generic element  of the linear system $|mH|$ is irreducible and smooth if $m$ big enough. Iterating this procedure we can obtain a smooth irreducible curve $Y \subset X$ whose class in $H^{2(n-1)}(X,\Z)$ is $N H^{n-1}$ for a certain positive $N$. Possibly by increasing $N$, by the Mehta-Ramanathan restriction theorem \cite{Mehta-Ramanathan} the restricted tangent bundle $T_{X|Y}$ is semistable. Moreover  the bundle $T_{X|Y}$ has a nonpositive or negative degree according to whether $\deg K_X$ is nonnegative or positive.

    By functoriality, the Higgs Grassmannian of the restriction of the Simpson system of $X$ to $Y$ is 
    \begin{equation}\label{restrgrass} 
     \hgrass{1}{\gS_{|Y}} \cong Y, \qquad 
    \hgrass{d}{\gS_{|Y}} \cong \grass{n+1-d}{T_{X|Y}} \quad \text{for} \quad 2 \le d \le n.
    \end{equation}
    We prove that $\gS_{|Y}$ is a semistable (stable) Higgs bundle. Note that as $Y$ is a smooth curve it is enough to test semistability for subbundles. In accordance with  equation \eqref{restrgrass},
    $\cO_Y$ is the only rank 1 Higgs subbundle of $\gS_{|Y}$, and in view on the hypotheses on $\Omega^1_X$, it 
does not destabilize $\gS_{|Y}$. On the other hand, if    \begin{equation}\label{exactsequenceres}
    0 \to \mathfrak F \to \gS_{|Y} \to \gQ \to 0 
\end{equation}
is an exact sequence of Higgs bundles on $Y$, with  $\mathfrak F= (F,\phi_F) $ a subbundle of rank $2\le d \le n$, by the isomorphism \eqref{restrgrass}   $Q^\vee$ is a rank $(n+1-d)$ subbundle of $T_{X|Y}$, and the semistability of the latter implies 
\begin{equation}\label{degres}
    \frac{1}{n} \deg T_{X|Y} + \frac{1}{n+1-d}\deg Q \ge 0. 
\end{equation}
The conditions 
\begin{equation}\label{mu}
    \mu (F)\; {\gtreqless}\ \mu (S_{|Y})
\end{equation} 
are equivalent to $$(n+1-d) \deg T_{X|Y} + (n+1) \deg Q \; {\gtreqless}
\ 0.$$ By equation \eqref{degres} and the negativity of $T_{X|Y}$ the inequality \eqref{mu} holds true, so that $\gS_{|Y}$ is semistable (stable). Then the same is true for $\gS$.
\end{proof}

This proof is fully algebraic   if we happen to know a priori that the tangent bundle $T_X$ is semistable. On the other hand, the known proofs of the semistability of the tangent bundle of a complex smooth minimal algebraic variety are transcendental \cite{tsuji}.

\section{Relations with the spectral cover}\label{section:spectral}
We establish a relation between the spectral cover of a rank $r$ Higgs bundle
$\fE=(E,\phi)$ over a smooth irreducible projective variety $X$ and its Higgs Grassmannian
$\hgrass{1}{\fE}$. 

We recall from \cite{SimModII} the construction of the spectral cover. Denote by $T$ the total space of the cotangent bundle $\Omega^1_X$, 
with projection $p\colon T\to X$,
and by $\lambda$ the tautological section of $p^\ast \Omega^1_X$. The scheme $T$ may be considered as the relative spectrum $\Spec_X \operatorname{Sym}^\bullet T_X$. Consider
the morphism $$\Psi_{\fE}=\id \otimes \lambda-p^\ast \phi \colon p^\ast E \to p^\ast(E \otimes \Omega^1_X).$$
The spectral cover $S_\fE$ of the Higgs bundle $\fE$ is defined as the schematic support of the sheaf $\ker\Psi_{\fE}$; it is described by the equation 
\begin{equation}\label{eq:polchar}
\lambda^r + a_1 \lambda^{r-1}+\dots+a_r=0,
\end{equation}
where the $a_i$ are the canonical sections of $ \operatorname{Sym}^i \Omega_X^1$   given by the coefficients of the characteristic polymomial of $\phi$. The restriction $p \colon  S_\fE \to X$ is a  finite morphism of degree $r$ and
 $\ker\Psi_{\fE}$ restricted to $S_\fE$ is a rank 1 torsion-free sheaf  $L$; moreover, $p_\ast L$ is isomorphic to $E$. 

We define a surjective morphism $g \colon \hgrass{1}{\fE} \to S_\fE$, that set-theoretically maps an eingenspace of $\phi$ to its eigenvalue; schematically, this corresponds to the Higgs field $$\theta \in H^0(\hgrass{1}{\fE}, \rho_1^\ast\Omega^1_X)$$ 
of the universal line bundle $\gS_1$ introduced in \eqref{eq:uniSeq}, which actually takes values in $S_\fE$ as $\theta(\gS_1)\subset \rho_1^*(E\otimes \Omega_{X}^1)$.

\begin{figure}[ht]
\centering
\parbox{.45\textwidth}{ \centering
\begin{tikzpicture}
\node at (0,1) {$\ $};
    \draw[ultra thick](0,0) to (4,0);
    \draw (0,-1) to (4,-1);
    \node at (4.5,-1) {$\mathbb A^1$}; 
    \end{tikzpicture}
\caption{\label{fig:3.18} In Example \ref{ex:J1} the spectral variety has equation $(\lambda - x)^2=0$, and is isomorphic to the rank 1 Higgs Grassmannian, i.e., it is a double line.}}
\hskip8mm
\parbox{.45\textwidth}{ \centering
\begin{tikzpicture}

\draw[  thick, domain=-0.7:0.7, samples=100] 
  plot ({0.5 + 5*(\x)^2 },{\x}); 
  \node at (0.5,-1.4) {$x=0$};
 
   \draw(0,-1) to (3,-1);
    \node at (3.4,-1.2) {$\mathbb A^1$}; 
     \filldraw (0.5,-1) circle (2pt);
    \end{tikzpicture} 
\caption{In Example \ref{ex:J2} the spectral variety has equation $\lambda^2 - x=0$, is a ramified double covering of the affine line, and is isomorphic to the rank 1 Higgs Grassmannian.  \label{fig:3.19}}}
\end{figure}

\begin{figure}[ht]
\centering
\begin{tikzpicture}
    \draw[thick](0,0-0.5) to (4,0-0.5);
    \draw[thick](0,-1-0.5) to (4,-1-0.5);
    \draw[thick](2,-2-0.5) to (2,1-0.5);
    \draw(0,-3) to (4,-3);
    \node at (4.5,-3) {$\mathbb A^1$}; 
    \node at (2,-3.4){$x=0$};
     \filldraw (2,-3) circle (2pt); 
    \draw[thick](7,0) to (11,-2);
    \draw[thick](7,-2) to (11,0);
    \draw(7,-3) to (11,-3);
    \node at (11.5,-3) {$\mathbb A^1$}; 
    \node at (9,-3.4){$x=0$};
     \filldraw (9,-3) circle (2pt);
    \end{tikzpicture}
\caption{\label{fig:3.20}   Example \ref{ex:J3}. The drawing on the left is the rank 1 Higgs Grassmannian, with a vertical component at the origin. On the right, the singular spectral curve with equation $(\lambda -x)(\lambda +x)=0$. Note that the morphism $g$ contracts the vertical component of the Higgs Grassmannian; in this case the morphism $f$ introduced in \eqref{eq:deff} is not globally defined.  }
\end{figure}
When the base $X$ is a curve, $S_\fE$ is called the spectral curve \cite{Hitchin-stable, BNR}. We consider for a while the situation where $S_{\fE}$ is smooth,
so that $L$ is a line bundle.
One has an exact sequence (see e.g.\! \cite{BNR}, Remark 3.7) 
\[
\begin{tikzcd}
     0 \arrow[r]& L \arrow[r]& p^\ast E \arrow[r,"\Psi_{\fE}"]&   p^\ast (E\otimes K_X) \arrow[r]&
L(R) \otimes p^\ast K_X \arrow[r]& 0,
\end{tikzcd}
\]
where $R$ is the ramification divisor of $S_\fE\to X$. Splitting this exact sequence in the middle we see that the quotient $p^\ast E/L$ is locally free, i.e., $L$ is a subbundle of $p^\ast E$.
By the universal property of the Higgs Grassmannian therefore we have a morphism 
\begin{equation}\label{eq:deff}
    f \colon S_\fE \to \hgrass 1\fE \quad \text{such that} \quad
\rho_1\circ f = p \quad\text{and}\quad (L,\lambda) = f^\ast \cO_{\hgrass 1\fE}(-1).
\end{equation}
The composition $g\circ f $ is the identity of $S_\fE$. In general, the morphism $f$ is only defined on the smooth locus of $S_{\fE}$.

\begin{example} \label{ex:dim2} This example shows that in dimension higher than one the relation between the Higgs Grassmannian and the spectral cover is more complicated than in the case of curves.
Let $\fE=(E,\phi)$ be a Higgs bundle on  $\BA^2$ with the Higgs field given in the coordinates $x,y$ by the matrices
\[
\phi^{(1)}=\phi^{(2)}=\begin{pmatrix}
    x&0\\0& y
\end{pmatrix}.
\]
Then, as explained in \Cref{section:discriminant}, the Higgs Grassmannian $\hgrass 1\fE$  is defined by the principal ideal
\[
I_{\hgrass 1\fE}=((x-y)z_1z_2).
\]
Therefore, it consists of three irreducible components $V,Z_1,Z_2$, where $Z_1, Z_2$ are disjoint and   map isomorphically onto $\BA^2$, while $V$ is a $\mathbb P^1$-bundle on the diagonal line  $\Delta=\{x=y\}$.

We compute the spectral cover. After letting 
$$\lambda = u \,dx + v\, dy,\qquad \phi=
\begin{pmatrix}
    x&0\\0& y
\end{pmatrix} \,(dx + dy)$$
the coefficients of the equation \eqref{eq:polchar} are
$$ a_1 = - \tr\phi = - (x+y)(dx+dy) , \qquad
a_2 = \det\phi = xy\,(dx+dy)^2,$$
so that in components the equation reads
\begin{align*}
u^2- (x+y)\,u + xy  & = 0 ,\\
 2uv- (x+y)(u+v)+2xy & = 0 ,\\
v^2 - (x+y)\, v + xy& =0\,,
\end{align*}
corresponding to the ideal
\[
I_{S_{\fE}} = ((u-x)(u- y),(v-x)(v-y),(u-v)^2).
\]
Let $Z\subset T$  and $Z_0\subset T$ be the closed subschemes corresponding to $I_{S_{\fE}}$ and $\sqrt{I_{S_{\fE}}}$, respectively (so $Z$ is the spectral cover, and $Z_0$ its reduction). The degree of  $Z\to\mathbb A^2$ is 2 away from the $\Delta$, and is 3 on it; 
the degree of  $Z_0\to\mathbb A^2$ is 2 everywhere.

The primary decomposition of $I_{S_{\fE}}$ has the three terms
\begin{align*}
I_1  = &\  (u-v,  y-v), \\
I_2  = &\  (u-v, x-v) ,\\
I_3  = &\  ((u-v)^2,  (x-v)^2, (y-v)^2, (u-v)(x+y-2v)) \,.
\end{align*}
{Note that }$I_1$ and $I_2$ are radical, while $I_3$ is not, and
$$\sqrt{I_3} = (x-y,u-v,y-v).$$
If we call $Z_i$ the component of $Z$ corresponding to $I_i$, we see that
\begin{itemize} \item $Z_1$ and $Z_2$ are reduced, and intersect over $\Delta$;
\item $Z_3$ is not reduced, and is an embedded component of $Z$, coinciding topologically with $Z_1\cap Z_2$. Note that the fibers of {$p_{|Z_3}$} are not curvilinear 0-schemes.
\end{itemize}
Again, the morphism $g$ contracts the vertical component, thus generating a singularity, but in this case it also produces nonreduced structure.
\end{example}

\begin{figure}[ht]
\centering

\begin{tikzpicture}
\begin{axis}[
    view={80}{20}, 
    width=12cm,
    height=10cm, 
    zlabel={$u=v$},
    xlabel={$x$},
    ylabel={$y$},
    grid=major,
    domain=-2:2,
    y domain=-2:2,
    xtick distance=2,
    ytick distance=2,
    ztick distance=2,
    samples=20, 
    zmin=-2, zmax=2, 
    legend pos=outer north east
]
\addplot3[
    surf, 
    shader=interp,      
    mesh/rows=25,       
    colormap={redcolormap}{color=(orange!20) color=(red!70)},
    domain=-2:2,
    y domain=0:1, 
    samples=25,
    opacity=0.6,
    faceted color=red!60!black,
    z buffer=sort
] (
    {x},                
    {x + (2-x)*y},      
    {x + (2-x)*y}       
);
\addplot3[
    surf,shader=interp,
    colormap={bluecolormap}{color=(cyan!20) color=(blue!70)},
    opacity=0.6,
    faceted color=blue!60!black
] {x}; 
\addplot3[
    surf,
    shader=interp,      
    mesh/rows=25,       
    colormap={redcolormap}{color=(orange!20) color=(red!70)},
    domain=-2:2, 
    y domain=0:1, 
    samples=25,
    opacity=0.6,
    faceted color=red!60!black,
    z buffer=sort
] (
    {x},                
    {-2 + (x+2)*y},     
    {-2 + (x+2)*y}      
);

 \addplot3[
     ultra thick,
     color=black,
     domain=-2:2,
     samples y=0 
 ] (x, x, x);

\addplot3[
     color=black,
     domain=-0.58:2,
     samples y=0 
 ] (x, x, -2);
 
 \addplot3[
     dotted,   
     color=black,
     domain=-2:-.58,
     samples y=0 
 ] (x, x, -2);
 
\end{axis}
\node at (12,4.1,8) {$\Delta$};
\end{tikzpicture}
\caption{The spectral cover of \Cref{ex:dim2}. The pink and blue planes are the $Z_1$ and $Z_2$ components, respectively; they lie in the hyperplane $u=v$. The black line is the support of the embedded component $Z_3$.}
\end{figure}

\begin{example}
    Let us consider the Higgs bundle $\fE = (E,\phi)$ on $\mathbb A^n$ given by the diagonal Higgs field
    \[
    \phi =\begin{pmatrix}
        x_1dx_1 &0 & \cdots & 0\\
        0&x_2dx_2 & \cdots & 0\\
        \vdots&\vdots&\ddots&\vdots\\
        0&0&\cdots &x_ndx_n
    \end{pmatrix}.
    \]
A direct computation via \eqref{e:piphi} shows that
\[
I_{\hgrass1\fE}= \left(z_iz_jx_i\ |\ 1\le i\not= j\le n\right).
\]
This ideal is radical as it is monomial, generated by squarefree monomials. Its decomposition in minimal primes is
\begin{equation}\label{eq:miprimeex}
    I_{\hgrass1\fE}= \bigcap_{i=1}^n(z_j\ |\ j\not=i) \ \cap\bigcap_{\substack{P\subset\{1,\ldots,n\}\\ |P|> 1 }}(x_i\ |\ i\in P)+(z_i\ |\ i\in P^c),
\end{equation}
where $P^c\subset \{1,\ldots,n\}$ denotes the complement of $P$. To see this, one   first observes that $I_{\hgrass1\fE}\subset \bigcap_{i=1}^n(z_j\ |\ j\not=i)$, then saturates by this intersection and proceeds recursively, see \cite[\S 4.4 Theorem 10]{COXIdeals}.

As a consequence, the Higgs Grassmannian has $2^n-1$ irreducible components that we divide in two collections.
\begin{itemize}
    \item The components corresponding to the first $n$ ideals in the intersection \eqref{eq:miprimeex} are ``horizontal,'' that is, they are disjoint and map isomorphically onto $\BA^n$ via the respective restrictions of the structure morphism.
    \item The other components are ``vertical,''  as they are supported each on a linear coordinate subspace of codimension greater than or equal to 2. More precisely, the generic fiber {of $\rho_1$} over a subspace of codimension $c$ is isomorphic to $\mathbb P^{c-1}$.
\end{itemize}
 
The number $2^n-1$ is then obtained by summing up all contributions. 

We move now to the description of the spectral cover. We put first $\lambda=\sum_{i=1}^n\lambda_id x_i$ and we study the scheme defined by the vanishing of \eqref{eq:polchar}. Consider, for $i=1,\ldots,n$, the prime ideals 
\begin{equation}\label{eq:gammas}
I_i=(\lambda_i-x_i)+( \lambda_j\ | \ j\not=i ).
\end{equation}

By the very definition of characteristic polynomial,   we have
\begin{equation}\label{eq:radI}
\sqrt{I_{S_{\fE}}}= \bigcap_{i=1}^nI_i.
\end{equation}
However, the spectral cover is not reduced as one can directly check. Note that the morphism   from the Higgs Grassmannian to the spectral cover contracts {the} $2^n-n-1$ {vertical} irreducible components.

We spell out in detail the case $n=2$. Setting for simplicity $$x=x_1,\quad y=x_2,\quad u = \lambda_1, \quad v=\lambda_2,$$ we have
$$ I_{S_{\fE}} = (u^2-xu, 2uv-xv-yu+xy,v^2-yv).$$
The primary decomposition
$ I_{S_{\fE}} = I_1\cap I_2\cap I_3$
shows two reduced components $Z_1$, $Z_2$ corresponding to
$$I_1=(v,x-u),\qquad I_2 = (u,y-v),$$
and a nonreduced component $Z_3$ corresponding to
$$I_3 = (x^2,y^2,u^2,v^2,xu,yv,xy-yu-xv+2uv).$$
Since $\sqrt{I_3}=(x,y,u,v)$, the reduction of $Z_3$ is just a point (the intersection of $Z_1$ and $Z_2$).

On the other hand, one has
$$\sqrt{I_{S_{\fE}}} = I_1\cap I_2,$$ 
in conformity with \Cref{eq:gammas,eq:radI}.
\end{example}

\section{Higgs flag scheme and Higgs Quot scheme }
\label{sec:flag}
This section serves as a suggestion for possible further directions in the study of moduli spaces analogous to the Higgs Grassmannians. Precisely, we define now the Higgs versions of the flag variety and of the Quot scheme, and     present some basic example.

\begin{defin}
    Let $\fE=(E,\phi)$ be a Higgs bundle of rank $r\ge 0$ over a variety $X$, and let $\boldsymbol{r}=(0\le r_1\le \cdots \le r_k\le r)\in\mathbb Z_{\ge 0}^k$ be a nondecreasing sequence of nonnegative integers. The Higgs flag scheme $\fFl_{\boldsymbol{r}}(\fE)$ is the scheme representing the functor
    \[
    \begin{tikzcd}[row sep =tiny]
        \text{Sch}^{\mathrm{op}}  \arrow[r,"F_{\boldsymbol{r},\fE}"]&   \text{Sets}\\ 
        T\arrow[r,mapsto] & \left\{ \fE_{r_1}\subset  \dots \subset  \fE_{r_k} \subset  p_2^*\E   \right\},
    \end{tikzcd}
    \] 
where the $\fE_{r_i}$'s are rank $r_i$ Higgs subbundles of $p_2^\ast\fE$, and 
$p_2\colon T\times X \to X$ is the projection onto the second factor. 
\end{defin}
 
\begin{remark}\label{rem:setflag}  Note that the Higgs flag scheme comes equipped with a morphism $\varphi_{\boldsymbol{r}}:\fFl_{\boldsymbol{r}}(\fE)\to X$ in analogy with the morphism $\rho_d:\hgrass d\fE\to X$ introduced in \eqref{eq:uniSeq}. Set theoretically, the scheme $\fFl_{\boldsymbol{r}}(\fE)$  parametrizes flags $0\subset E_1\subset \cdots\subset E_k\subset E $ of $\phi$-invariant subbundles of respective rank $r_i$, for $i=1,\ldots,k$, of $E$.
\end{remark}
\begin{thm}\label{thm:flagexist}
    For every rank $r$ Higgs bundle  $\fE=(E,\phi)$ over a variety $X$ and every nondecreasing sequence of nonnegative integers $\boldsymbol{r}=(0\le r_1\le \cdots \le r_k\le r)$, the Higgs flag scheme $\fFl_{\boldsymbol{r}}(\fE)$ exists and  is unique up to canonical isomorphism.
\end{thm}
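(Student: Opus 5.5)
Uniqueness is formal: a representing object of a functor is unique up to unique isomorphism by Yoneda's lemma. So the content is existence, and I would build $\fFl_{\boldsymbol{r}}(\fE)$ as a closed subscheme of a fibre product of Higgs Grassmannians, mimicking the classical construction of flag bundles inside products of Grassmann bundles.

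\textbf{Step 1: the ambient scheme.} For each $i$ with $0<r_i<r$, let $\rho_{r_i}\colon \hgrass{r_i}{\fE}\to X$ be as in \eqref{eq:uniSeq}. For the degenerate values $r_i=0$ and $r_i=r$, use $X$ itself, with universal subbundle $0$ or $\fE$ respectively. Form the fibre product
\[
P=\hgrass{r_1}{\fE}\times_X\cdots\times_X\hgrass{r_k}{\fE},
\]
with projections $q_i$. On $P$ we have the pulled-back universal Higgs subbundles $q_i^\ast\gS_{r_i}\subset \rho^\ast\fE$, where $\rho\colon P\to X$ is the structure map. By \Cref{rem:universe} and the universal property of fibre products, $P$ represents the functor sending $T$ to the $k$-tuples of Higgs subbundles of $p_2^\ast\fE$ of ranks $r_1,\dots,r_k$, with no nesting condition.

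\textbf{Step 2: imposing the nesting conditions.} The remaining task is to cut out the locus where $q_i^\ast\gS_{r_i}\subset q_{i+1}^\ast\gS_{r_{i+1}}$ for each $i$. Consider the composite
\[
q_i^\ast S_{r_i}\to \rho^\ast E\to q_{i+1}^\ast Q_{r_{i+1}}.
\]
This is a morphism of vector bundles, hence locally a matrix of regular functions. Let $\fFl_{\boldsymbol{r}}(\fE)\subset P$ be the closed subscheme defined by the vanishing of all these composites, exactly as $\hgrass d\fE$ was defined inside $\grass dE$.

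\textbf{Step 3: checking the universal property.} Take $g\colon T\times X\to P$ corresponding to a tuple $(\fE_{r_1},\dots,\fE_{r_k})$. The composites pull back under $g$ to the maps $\fE_{r_i}\to p_2^\ast E/\fE_{r_{i+1}}$. These vanish precisely when $\fE_{r_i}\subset\fE_{r_{i+1}}$, because the quotient is locally free and the sub is a subbundle. It follows that $g$ factors through the closed subscheme if and only if the tuple is a flag. No extra compatibility with the Higgs fields is needed, since each member of the tuple is already a Higgs subbundle. Hence $\fFl_{\boldsymbol{r}}(\fE)$ represents $F_{\boldsymbol{r},\fE}$.

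\textbf{Expected obstacle.} The main care point is the bookkeeping for the parametrising scheme $T$. The paper's functor uses families over $T\times X$, whereas the Higgs Grassmannian is an $X$-scheme with points given by maps $T\to\hgrass d\fE$ over $X$. I would make this precise by fixing the same convention as \Cref{rem:universe} throughout and noting that fibre products and closed conditions are compatible with it. A second point is the equality cases $r_i=r_{i+1}$. There the condition forces $\fE_{r_i}=\fE_{r_{i+1}}$, and the closed subscheme is the diagonal, which is consistent with the definition.
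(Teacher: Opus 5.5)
Your proposal is correct and follows the paper's own route. The paper's very brief proof also constructs $\fFl_{\boldsymbol{r}}(\fE)$ as a closed subscheme of the product of the Higgs Grassmannians $\hgrass{r_i}{\fE}$, by analogy with the classical flag variety, and your Steps 1--3 (the fibre product over $X$, the degenerate ranks $0$ and $r$, and the nesting conditions as the vanishing of $q_i^\ast S_{r_i}\to q_{i+1}^\ast Q_{r_{i+1}}$) supply the details it leaves implicit, while on the bookkeeping issue you flag, your Step 3 works verbatim for an arbitrary $X$-scheme $f\colon Y\to X$ in place of $p_2\colon T\times X\to X$, so the cleanest formulation is the relative universal property the paper states for $\hgrass d{\fE}$ just before \Cref{rem:universe}.
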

\begin{proof}
    The proof is analogous to that of the existence and universality of the classical flag variety. It consists in constructing the Higgs flag scheme as a closed subscheme of the product $\prod_{i=1}^k\hgrass{r_i} \fE$, see \cite[Part III.9.1]{FULTON}. Alternatively, one can realize the Higgs flag scheme as a closed subscheme of the classical flag variety by arguing as in the computation of the equations of the Higgs Grassmannian inside the classical Grassmannian,  see \Cref{rem:setflag} and \Cref{eqns}.
\end{proof}

\begin{example} \label{ex:flag}
    Let us give explicit equations for the flag scheme $\fFl_{1,2}(\fE)$ on $\mathbb A^1$ where $\fE =  (E,\phi)$ for some Higgs field $\phi$ on the trivial rank 3  bundle $E=\mathscr{O}_{\BA^1}^{\oplus 3}$, see also Examples \ref{eq23} and \ref{ex:r=3}. We introduce vertical homogeneous coordinates $[z_1:z_2:z_3]$ and $[y_1:y_2:y_3]$ on $\mathbb P E$ and $\mathbb P E^\vee$, so that we have
    \[
    I_{\hgrass1\fE} =\left( \left. 
\sum_{k=1}^3   z_k(\phi_{jk}z_i-\phi_{ik}z_j)=0\,\right|\, 1\le i<j\le 3 \right), \]
and
\[ I_{\hgrass2\fE}=\left( \left. 
\sum_{k=1}^3   y_k(\phi_{kj}y_i-\phi_{ki}y_j)=0\,\right|\, 1\le i<j\le 3 \right).
    \]
    Note that   a point $[\Span (v) ]\in \hgrass1\fE$ corresponds to the $\phi$-invariant line $\Span (v)=\phi (\Span (v)) $ and a point $[ \ell ]\in  \hgrass2\fE$, for $\ell \in \PP E^\vee$ corresponds to the plane $\ker ( \ell \circ\phi)=\ker (\ell) $. 
    In other words, if we put
    \[
    f_{\fFl} = y_1z_1+y_2z_2+y_3z_3 \in \mathbb C[y_1,y_2,y_3,z_1 , z_2, z_3],
    \]
    then the closed immersion $\fFl_{1,2}(\fE)\subset \mathbb P E\times \mathbb P E^\vee$ corresponds to the {bihomogeneous} ideal 
    \[
    I_{\fFl_{1,2}(\fE)}= I_{\hgrass1\fE}+ I_{\hgrass2\fE}+(f_{\fFl_1}),
    \]
    where we interpret the homogeneous ideals $I_{\hgrass1\fE}$ and $I_{\hgrass1\fE}$ as ideals in $\mathbb C[y_1,y_2,y_3,z_1 , z_2, z_3]$. 
We describe explicitly the geometry of the fibers of the morphism $\varphi_{1,2}:\fFl_{1,2}(\fE)\to \BA^1 $ in some cases.  Suppose that $\phi$ corresponds to a matrix having one of the following forms:
\[
A=\begin{pmatrix}
    0&1&0\\
    0&0&1\\
    0&0&0
\end{pmatrix},\quad
B=\begin{pmatrix}
    0&1&0\\
    0&0&0\\
    0&0&\alpha
\end{pmatrix},\quad
C=\begin{pmatrix}
    0&0&0\\
    0&\beta&0\\
    0&0&\gamma
\end{pmatrix},
\]
where, for simplicity, we assume $\alpha,\beta,\gamma,\beta-\gamma\in \mathbb C^*$ to be nonzero constants. In \Cref{tab:placeholder} we distinguish the analysis in three cases according to the type of matrix associated with $\phi$; the description comes from a direct computation. Note that in all cases the fibers of $\varphi_{1,2}$ have length 6. The fiber in case (A) is the fourth of the list  in \cite{BRIANCON} and the fifth of the list  in \cite{POONEN}.
\end{example}
\begin{table}[ht]
    \centering
    \begin{tabular}{c|c|c|c}
    & $ I_{\hgrass1\fE} $&$ I_{\hgrass2\fE} $&Fiber of $\varphi_{1,2}$ \\[0.6em]
    \hline \vphantom{\Bigg|}
        (A) & $(z_2^2-z_1z_3, z_2z_3, z_3^2)$ & $(y_1^2, y_1y_2, y_2^2-y_1y_3)$& $\Spec(\mathbb C [t_1,t_2]/(t_1(t_1-t_2),t_2^3)$  \\[1em]
        (B) & $(z_2^2, (z_1\alpha-z_2)z_3, z_2z_3 )$  & $(y_1^2, y_1y_3, (y_2\alpha-y_1)y_3)$ & three copies of $\Spec(\mathbb C[t]/t^2)$ \\[1em]
        (C) & $(z_2z_1 , z_1z_3 , z_2z_3 )$ & $(y_1y_2, y_1y_3, y_2y_3)$ & six distinct closed points \\[10pt]
    \end{tabular}
    \caption{The ideals of the Higgs Grassmannians and the description of the Higgs flag in \Cref{ex:flag}}
    \label{tab:placeholder}
\end{table}

\begin{example}  
Let $X$ be a smooth variety of dimension $n$, and let $\gS=(S,\phi)$ be the Simpson system of $X$, see \Cref{section:Simpson}. Let also $\boldsymbol{r}=(0<r_1\le\cdots \le r_k\le n+1)\in\mathbb Z_{>0}^k$ be a nondecreasing sequence of positive integers. Denote by $\fFl_{\boldsymbol{r}}(\gS)_{\red}$   the reduction of the Higgs flag scheme of $\gS$. Then, arguing as in the proof of  \Cref{SimpGrass}, one has 
\[
\fFl_{\boldsymbol{r}}(\gS)_{\red}\cong \Fl_{\boldsymbol{r}'}{(\Omega^1_X)},
\]
   where $\boldsymbol{r}'=(0\le r_1-1\le\cdots \le r_k-1\le n) $, and $\Fl_{\boldsymbol{r}'}{(\Omega^1_X)} $ denotes the classical flag variety of the cotangent bundle.
   
\end{example}

 When studying the stability of a Higgs bundle, one has to consider all subsheaves, not only subbundles; that is, one needs to take into account the cases where the corresponding quotient is
 not locally free. For this reason, Higgs Grassmannians do not encode 
enough information about stability. A particularly useful tool to this end is the following Higgs equivariant version of the Quot scheme.

\begin{defin}
    Let $X$ be a smooth projective variety, and let $\fE=(E,\phi)$ be a Higgs bundle on $E$. The Higgs  Quot scheme $\Quot_{\fE}$ is the scheme representing the functor 
    \[
    \begin{tikzcd}[row sep =tiny]
        \text{Sch}^{\mathrm{op}}  \arrow[r,"\underline{\Quot}_{\,\fE}"]&   \text{Sets}\\
        T\arrow[r,mapsto] & \{ \mathfrak F\in\underline{\Quot}_{\,E}(T)\ |\ \mathfrak F\subset p_2^*\fE \mbox{ is a Higgs subsheaf }  \},
    \end{tikzcd}
    \] 
    where $\underline{\Quot}_{\,E}$ is the classical Quot functor\footnote{The definition of Quot functor is standard and we refer to \cite{FGAexplained} for more details.} of $E$, and 
$p_2\colon T\times X \to X$ is the projection onto the second factor.
\end{defin} 
\begin{thm}
    If $X$ is a smooth projective variety, and   $\fE=(E,\phi)$ is a Higgs bundle on $E$, the Higgs  Quot scheme $\Quot_{\fE}$ exists and  is unique up to canonical isomorphism.
\end{thm}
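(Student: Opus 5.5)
The plan is to realize $\Quot_{\fE}$ as a closed subscheme of the classical Quot scheme $\Quot_E$, whose existence and uniqueness we take from Grothendieck's theorem (see \cite{FGAexplained}). Uniqueness up to canonical isomorphism is then formal: any two representing objects of $\underline{\Quot}_{\,\fE}$ are canonically isomorphic by Yoneda. Existence therefore reduces to showing that the subfunctor $\underline{\Quot}_{\,\fE}\subset\underline{\Quot}_{\,E}$ is represented by a closed subscheme. Since $\Quot_E$ is a disjoint union over Hilbert polynomials of projective schemes, we may work with one component $Q=\Quot_E^{P}$ at a time.

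On $Q\times X$ we have the universal sequence $0\to \mathcal K\to p_2^*E\to \mathcal G\to 0$, with $\mathcal G$ flat over $Q$. Imitating the definition of $\hgrass d{\fE}$, we form the composition
\[
\sigma\colon \mathcal K\to p_2^*E\xrightarrow{p_2^*\phi} p_2^*(E\otimes\Omega^1_X)\to \mathcal G\otimes p_2^*\Omega^1_X .
\]
A $T$-point of $Q$ lies in $\underline{\Quot}_{\,\fE}(T)$ exactly when the pullback of $\sigma$ to $T\times X$ vanishes. For this we use right exactness of pullback: the pulled-back kernel surjects onto the kernel of the pulled-back quotient, by flatness of $\mathcal G$. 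The Higgs integrability condition is automatic for a $\phi$-invariant subsheaf, because it is inherited from $\phi$.

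The key step is to show that the locus where $\sigma$ vanishes after base change is closed. Twisting by $\cO_X(m)$ for $m\gg0$, $\sigma$ vanishes if and only if the induced map on pushforwards $p_{1*}(\mathcal K(m))\to p_{1*}(\mathcal G\otimes\Omega^1_X(m))$ vanishes. Here we choose $m$ uniformly so that $\mathcal K(m)$ is globally generated relative to $Q$ and all higher direct images vanish; this is possible by boundedness, since $Q$ is of finite type. With this choice both pushforwards are locally free, since $\mathcal G\otimes\Omega^1_X$ is still $Q$-flat ($\Omega^1_X$ being locally free), and their formation commutes with base change. The vanishing of a morphism between locally free sheaves, compatibly with all base changes, is represented by the closed subscheme cut out by its matrix entries. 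We then verify that the vanishing of the pushforward map on $T$ is equivalent to the vanishing of $\sigma_T$, using relative global generation of $\mathcal K_T(m)$; this holds because $\mathcal K_T(m)$ is a quotient of the pullback of the generated $\mathcal K(m)$.

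I expect the main obstacle to be this uniform choice of $m$ and the base-change compatibility for $\mathcal K$, which is not itself $Q$-flat in the naive sense. The plan is to replace $\mathcal K$ by its generating sheaf $p_1^*p_{1*}\mathcal K(m)\otimes\cO(-m)\to p_2^*E$. Then the condition becomes the vanishing of a map from a pulled-back locally free sheaf to the flat sheaf $\mathcal G\otimes p_2^*\Omega^1_X(m)$, which is precisely the situation of \cite[Lemma on vanishing loci]{FGAexplained}-type representability for homomorphisms into flat families. Alternatively, one may invoke the general fact that, for $\mathcal F$ coherent and $\mathcal G$ flat over the base, the functor of base changes on which a given map $\mathcal F\to\mathcal G$ vanishes is represented by a closed subscheme.
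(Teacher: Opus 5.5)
Your proposal is correct and follows the same route as the paper. The paper's proof only asserts that $\underline{\Quot}_{\,\fE}$ is a closed subfunctor of $\underline{\Quot}_{\,E}$, and hence is represented by a closed subscheme of $\Quot_E$; your argument (twisting by $\cO_X(m)$ and pushing forward, or the standard lemma on vanishing loci of maps into a $Q$-flat sheaf) supplies the justification the paper leaves implicit.

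One small correction: $\mathcal K$ is in fact $Q$-flat, being the kernel of a surjection between $Q$-flat sheaves. So $p_{1*}\mathcal K(m)$ is locally free and commutes with base change for the same reasons as $\mathcal G\otimes p_2^*\Omega^1_X$. Flatness of $\mathcal G$ also makes the pulled-back kernel isomorphic to the kernel of the pulled-back quotient, not merely surjecting onto it. The obstacle you anticipate therefore does not arise.
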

\begin{proof}
    The proof is standard and similar to that of \Cref{thm:flagexist}. It just consists in noticing that $\underline{\Quot}_{\,\fE}$ is a closed subfunctor of $\underline{\Quot}_{\,E}$, and hence   is represented by a closed subscheme of ${\Quot}_{E}$. 
\end{proof}
\begin{remark}
    Set theoretically, the Higgs Quot scheme parametrizes $\phi$-invariant subsheaves of the locally free sheaf $E$. It is a scheme locally of finite type and  decomposes as a disjoint union of   projective (nonnecessarily connected) schemes
    \[
    \Quot_{\fE}=\coprod_{P\in\mathbb Z_{\ge0}[t]}\Quot_{\fE}^P,
    \]
    where the closed points of $\Quot_{\fE}^P$ correspond to Higgs subsheaves $F\subset E$ such that the quotient $E/F$ has Hilbert polynomial $P$. We  will refer to this open subschemes of $\Quot_{\fE}$ as Quot schemes as well.
\end{remark}
\begin{remark}
    The hypothesis that  $X$ is projective is necessary  to define the Higgs Quot scheme $\Quot_{\fE}^P$ only when $\deg P>0$. Therefore, when the polynomial $P$ is a nonnegative constant and $\fE$ is a Higgs bundle on a quasi projective variety, we will consider the Quot scheme $\Quot_{\fE}^P$. In this setting the Quot scheme is clearly only quasi-projective and not projective. 
\end{remark}
\begin{example}
    In this last example we give an explicit description of the schemes $\Quot^d_{\gS}$, where $\gS=(S,\phi) $ is the Simpson system on $\BA^1$ and $d$ is a positive integer. Recall that  $S\cong \cO_{\BA^1}^{\oplus 2}$ and in local coordinates $\phi $ corresponds to the matrix
    \[
    \phi=\begin{pmatrix}
        0&1\\0&0
    \end{pmatrix}.
    \]
    Since we work over the affine scheme $\BA^1$, we switch to the more convenient language of commutative algebra. Denote by $R=\mathbb C[x]$ the ring of regular functions on $\BA^1$.  
    A closed point $[K]\in \Quot_{S}^d  $ corresponds to a  free submodule $K\subset R^{\oplus 2}$ of rank 2 with  $\dim_{\mathbb C}(R^{\oplus 2}/K )=d$. In particular, it can be written as
    \[
    K=( p_1(x)e_1+q_1(x)e_2, p_2(x)e_1+q_2(x)e_2 ),
    \]
    for some $p_1,p_2,q_1,q_2\in R$.
    
    Now, the point $[K]\in \Quot_{S}^d$ belongs to $\Quot_{\gS}^d$ if and only if $\phi(K) \subset K\otimes \Omega_{\BA^1}^1$. In commutative algebra language, this forces  
    \[
    q_1 =0,\ q_2\in (p_1)\quad\mbox{ or }\quad q_2 =0,\ q_1\in (p_2).
    \]
    In other words, for every submodule $K\subset R^{\oplus 2}$ corresponding to a closed point $[K]\in \Quot_{\gS}^d$ we can choose canonical generators $K=(f,g)$ with
    \[
    f= p_1(x)e_1\quad \mbox{ and }\quad g= p_2(x)e_1+q(x)e_2 ,
    \]
    where $q\in(p_1)$ and $\deg p_2<\deg p_1$. Consider the canonical projection onto the factors
    \[
    \begin{tikzcd}[row sep=tiny]
        R^{\oplus 2}\arrow[r,"\pi_i"]&R\\
        u_1e_1+u_2e_2\arrow[mapsto,r]& u_i e_i.
    \end{tikzcd}
    \]
As already observed, the pair $(I_1,I_2)= \left((\pi_1 f),( \pi_2 g)\right) $ has the property $I_1\supset I_2$ and therefore  defines a point $[I_1,I_2]\in(\BA^1)^{[a,b]}$ for some $a,b\in \mathbb Z_{\ge 0}$ with $a+b=d$; here $(\BA^1)^{[a,b]}$ denotes the nested Hilbert scheme of points, i.e., the moduli space of nestings of zero-dimensional subschemes of $\BA^1$ of respective lengths $a\le b$. See \cite{SERNESI} for the definition and \cite{DNHS} for the notation.
    
    Note that, since we only used the property that $R$ is a PID, this construction works verbatim in families. Hence, by universality we have a morphism
    \[
    \begin{tikzcd}[row sep = tiny]
        \Quot_{\gS}^d\arrow[r,"\Phi_{\gS}"]&\displaystyle\coprod_{\substack{a+b=d\\ 0\le a\le b\le d}}(\BA^1)^{[a,b]}\cong \displaystyle\coprod_{b=\lceil d/2\rceil}^d\BA^b,
    \end{tikzcd}
    \]
    whose fiber over a point $[I,J]\in (\BA^1)^{[a,b]}$ is an affine space of dimension $a-1$. In particular,   the two spaces have the same number of connected components, which is $\lfloor d/2\rfloor+1$. Note also that the association
    \[ 
    \begin{tikzcd}[row sep = tiny]
        \displaystyle\coprod_{a+b=d}(\BA^1)^{[a,b]}\arrow[r]&\Quot_{\gS}^d \\
         {[(u(x)),(v(x))]}\arrow[r,mapsto]& {[ (u(x)e_1 , v(x) e_2) ]},
    \end{tikzcd}
    \]
can be performed in families and   defines a section of $\Phi_{\gS}$. This trivializes the morphism $\Phi_{\gS}$ and provides the isomorphism
\[
\Quot_{\gS}^d\cong \displaystyle\coprod_{b=\lceil d/2\rceil}^d  \BA ^{d-1}
\]
and the   equidimensionality of $\Quot_{\gS}^d$. 
\end{example}

\let\oldthebibliography\thebibliography
\let\endoldthebibliography\endthebibliography
\renewenvironment{thebibliography}[1]{
  \begin{oldthebibliography}{#1}
    \setlength{\itemsep}{2pt}    
    \setlength{\parskip}{0pt}    
}{
  \end{oldthebibliography}
}


\begin{thebibliography}{10}

\bibitem{BNR}
{\sc A.~Beauville, M.~S. Narasimhan, and S.~Ramanan}, {\em Spectral curves and
  the generalised theta divisor}, J. Reine Angew. Math., 398 (1989),
  pp.~169--179.

\bibitem{BRIANCON}
{\sc J.~Brian\c{c}on}, {\em Description de $\text{Hilb}^n\text{C}\,\{x,y\}$},
  Invent. Math., 41 (1977), pp.~45--89.

\bibitem{bruzzo-grana-adv}
{\sc U.~Bruzzo and B.~Gra\~{n}a Otero}, {\em Semistable and numerically
  effective principal ({H}iggs) bundles}, Adv. Math., 226 (2011),
  pp.~3655--3676.

\bibitem{bruzzo-hernandez-dga}
{\sc U.~Bruzzo and D.~Hern{\'a}ndez~Ruip{\'e}rez}, {\em Semistability vs.
  nefness for ({H}iggs) vector bundles}, Differential Geom. Appl., 24 (2006),
  pp.~403--416.

\bibitem{COXIdeals}
{\sc D.~A. Cox, J.~Little, and D.~O'Shea}, {\em Ideals, varieties, and
  algorithms---an introduction to computational algebraic geometry and
  commutative algebra}, Undergraduate Texts in Mathematics, Springer, Cham,
  fifth~ed., 2025.

\bibitem{FGAexplained}
{\sc B.~Fantechi, L.~G\"ottsche, L.~Illusie, S.~L. Kleiman, N.~Nitsure, and
  A.~Vistoli}, {\em Fundamental algebraic geometry}, vol.~123 of Mathematical
  Surveys and Monographs, American Mathematical Society, Providence, RI, 2005.
\newblock Grothendieck's FGA explained.

\bibitem{FULTON}
{\sc W.~Fulton}, {\em Young tableaux}, vol.~35 of London Mathematical Society
  Student Texts, Cambridge University Press, Cambridge, 1997.
\newblock With applications to representation theory and geometry.

\bibitem{GANTMACHER}
{\sc F.~R. Gantmacher}, {\em The theory of matrices. {V}ol. 1}, AMS Chelsea
  Publishing, Providence, RI, 1998.
\newblock Translated from the Russian by K. A. Hirsch, Reprint of the 1959
  translation.

\bibitem{DNHS}
{\sc M.~Graffeo, P.~Lella, S.~Monavari, A.~T. Ricolfi, and A.~Sammartano}, {\em
  The geometry of double nested {H}ilbert schemes of points on curves}, Trans.
  Amer. Math. Soc., 378 (2025), pp.~6013--6047.

\bibitem{GKPT19}
{\sc D.~Greb, S.~Kebekus, T.~Peternell, and B.~Taji}, {\em The {M}iyaoka-{Y}au
  inequality and uniformisation of canonical models}, Ann. Sci. \'Ec. Norm.
  Sup\'er. (4), 52 (2019), pp.~1487--1535.

\bibitem{Hitchin-self}
{\sc N.~J. Hitchin}, {\em The self-duality equations on a {R}iemann surface},
  Proc. London Math. Soc. (3), 55 (1987), pp.~59--126.

\bibitem{Hitchin-stable}
\leavevmode\vrule height 2pt depth -1.6pt width 23pt, {\em Stable bundles and
  integrable systems}, Duke Math. J., 54 (1987), pp.~91--114.

\bibitem{Mehta-Ramanathan}
{\sc V.~B. Mehta and A.~Ramanathan}, {\em Restriction of stable sheaves and
  representations of the fundamental group}, Invent. Math., 77 (1984),
  pp.~163--172.

\bibitem{Mi77}
{\sc Y.~Miyaoka}, {\em On the {C}hern numbers of surfaces of general type},
  Invent. Math., 42 (1977), pp.~225--237.

\bibitem{Mi85}
\leavevmode\vrule height 2pt depth -1.6pt width 23pt, {\em The {C}hern classes
  and {K}odaira dimension of a minimal variety}, in Algebraic geometry, Sendai,
  1985, vol.~10 of Adv. Stud. Pure Math., North-Holland, Amsterdam, 1987,
  pp.~449--476.

\bibitem{Mukai-inv}
{\sc S.~Mukai}, {\em An introduction to invariants and moduli}, vol.~81 of
  Cambridge Studies in Advanced Mathematics, Cambridge University Press,
  Cambridge, 2003.
\newblock Reprinted 2006.

\bibitem{POONEN}
{\sc B.~Poonen}, {\em Isomorphism types of commutative algebras of finite rank
  over an algebraically closed field}, in Computational arithmetic geometry,
  vol.~463 of Contemp. Math., Amer. Math. Soc., Providence, RI, 2008,
  pp.~111--120.

\bibitem{SERNESI}
{\sc E.~Sernesi}, {\em Deformations of algebraic schemes}, vol.~334 of
  Grundlehren der mathematischen Wissenschaften [Fundamental Principles of
  Mathematical Sciences], Springer-Verlag, Berlin, 2006.

\bibitem{simpson-unif}
{\sc C.~T. Simpson}, {\em Constructing variations of {H}odge structure using
  {Y}ang-{M}ills theory and applications to uniformization}, J. Amer. Math.
  Soc., 1 (1988), pp.~867--918.

\bibitem{simpson-local}
\leavevmode\vrule height 2pt depth -1.6pt width 23pt, {\em Higgs bundles and
  local systems}, Inst. Hautes \'Etudes Sci. Publ. Math., 75 (1992), pp.~5--95.

\bibitem{SimModII}
\leavevmode\vrule height 2pt depth -1.6pt width 23pt, {\em Moduli of
  representations of the fundamental group of a smooth projective variety.
  {II}}, Inst. Hautes \'Etudes Sci. Publ. Math.,  (1994), pp.~5--79.

\bibitem{tsuji}
{\sc H.~Tsuji}, {\em Stability of tangent bundles of minimal algebraic
  varieties}, Topology, 27 (1988), pp.~429--442.

\bibitem{Yau77}
{\sc S.-T. Yau}, {\em Calabi’s conjecture and some new results in algebraic
  geometry}, Proc. Nat. Acad. Sci. U.S.A, 74 (1977), pp.~1798--1799.

\end{thebibliography}
\end{document}